\title{Decay estimates for Schr\"{o}dinger's equation with magnetic potentials in three dimensions}
\author{Marius Beceanu}
\author{Hyun-Kyoung Kwon}
\newcommand{\mc}{\mathcal}
\newcommand{\R}{\mathbb R}
\newcommand{\C}{\mathbb C}
\newcommand{\K}{\mathcal K}
\newcommand{\KK}{{\mathcal K_2}}
\newcommand{\U}{\mathcal U}
\newcommand{\B}{\mathcal B}
\newcommand{\les}{\lesssim}
\newcommand{\dd}{\,d}
\newcommand{\be}{\begin{equation}}
\newcommand{\ee}{\end{equation}}
\newcommand{\lb}{\label}
\newcommand{\ds}{\displaystyle}
\DeclareMathOperator{\Div}{div}
\renewcommand{\div}{\Div}
\DeclareMathOperator{\Ker}{Ker}
\newcommand{\ov}{\overline}
\newtheorem{proposition}{Proposition}
\newtheorem{lemma}[proposition]{Lemma}
\newtheorem{theorem}[proposition]{Theorem}
\newtheorem{corollary}[proposition]{Corollary}
\newtheorem{assumption}{Assumption}
\newtheorem{definition}{Definition}
\subjclass{35Q41, 35B40, 47A10, 47A40, 47D08}
\begin{document}
\maketitle
\begin{abstract}
In this paper we prove that Schr\"{o}dinger's equation with a Hamiltonian of the form $H=-\Delta+i(A \nabla + \nabla A) + V$, which includes a magnetic potential $A$, has the same dispersive and solution decay properties as the free Schr\"{o}dinger equation. In particular, we prove $L^1 \to L^\infty$ decay and some related estimates for the wave equation.
	
The potentials $A$ and $V$ are short-range and $A$ has four derivatives, but they can be arbitrarily large. All results hold in three space dimensions.
\end{abstract}

\tableofcontents

\section{Introduction}
\subsection{Setup}

In three space dimensions, consider symmetric Hamiltonians of the form
$$
H=-\Delta+i(A\nabla+\nabla A)+V,
$$
where $-\Delta$ is the free Laplacian, $A:\R^3 \to \R^3$ is the magnetic potential (or gradient term in the potential), and $V:\R^3 \to \R$ is the electric (scalar) potential. When $A$ and $V$ are real-valued and $V \in \K$ (see below) and
$$
A \in \mc A = \{A \mid \sum_{j=0}^\infty 2^j \|A\|_{L^\infty(D_j)} < \infty\}
$$
where $D_0=B(0, 1)$, $D_n=B(0, 2^n) \setminus B(0, 2^{n-1})$ for $n \geq 1$ (see \cite{IonSch} for this and more general conditions), then $H$ is self-adjoint.

%The more general self-adjointness condition for $V$ in \cite{Sim} may not work in combination with $A \ne 0$.

This paper regards the Schr\"{o}dinger equation (\ref{sch}) where the time evolution is driven by the Hamiltonian $H$, which contains a magnetic potential.

An important part in the analysis is played by the global Kato space introduced in \cite{RodSch}
$$
\K = \{V \mid \sup_{y \in \R^3} \int_{\R^3} \frac {|V(x)| \dd x}{|x-y|} < \infty\},\ \|V\|_{\K} = \sup_{y \in \R^3} \int_{\R^3} \frac {|V(x)| \dd x}{|x-y|},
$$
as well as similar spaces defined below such as $\K_{\log}$, $\KK$, $\K_{2, \log^2}$, and $L \log L$, some including logarithmic modifications, see definitions (\ref{llogl}-\ref{kklog2}). For convenience note that
$$
L^{3/2-\epsilon} \cap L^{3/2+\epsilon} \subset L^{3/2, 1} \subset \K,\ L^{3/2-\epsilon} \cap L^{3/2+\epsilon} \subset \K_{\log},\ L^{3-\epsilon} \cap L^{3+\epsilon} \subset L^{3, 1} \subset \KK,\ L^{3-\epsilon} \cap L^{3+\epsilon} \subset \K_{2, \log^2}
$$
and $L^1 \cap L^{1+\epsilon} \subset L \log_+ L \subset L^1$. Here $L^{3/2, 1}$ and $L^{3, 1}$ are Lorentz spaces, see \cite{BerLof}.

When $H$ is self-adjoint, functional calculus defines a family of commuting normal operators
$$
f(H) = \int_{\sigma(H)} f(\lambda) \dd E_\lambda
$$
that includes $H$ itself. This family also includes the propagator for equation (\ref{sch}) and several others.

\subsection{Main results}

Consider the following partial differential equation with magnetic potential:
\be\lb{sch}
i \partial_t f + H f = F,\ f(0)=f_0 \text{ (Schr\"{o}dinger's equation).}
\ee
%\be\lb{wav}
%\partial^2_t f - H f = F,\ f(0)=f_0, \partial_t f(0) = f_1 \text{ (the wave equation),}
%\ee
%and
%\be\lb{kg}
%\partial^2_t f - (H +1) f = F,\ f(0)=f_0, \partial_t f(0) = f_1 \text{ (Klein--Gordon).}
%\ee
The solutions are expressed by means of Duhamel's formula and propagators:
$$
f(t) = e^{itH} f_0 - i \int_0^t e^{i(t-s)H} F(s) \dd s.
$$
%$$
%f(t) = \cos(t \sqrt H) f_0 + \frac {\sin(t \sqrt H)}{\sqrt H} f_1 + \int_0^t \frac {\sin((t-s) \sqrt H)}{\sqrt H} F(s) \dd s,
%$$
%and so on.
Here $e^{itH}$ is the propagator for Schr\"{o}dinger's equation (\ref{sch}).
%, just as $\frac {\sin(t \sqrt H)}{\sqrt H}$ and $\cos(t \sqrt H)$ are for the wave equation and $\frac {\sin(t \sqrt {H+1})}{\sqrt {H+1}}$ and $\cos(t \sqrt {H+1})$ are for the Klein--Gordon equation (\ref{kg}).

Let
\be\lb{X}
A \in X= \{A \in \K_{2, \log^2} \cap \K_{\log} \cap L^{3, 1} \cap L^{3/2, 1} \mid \nabla A \in \K_{\log} \cap \K_{2, \log^2} \cap L^{3/2, 1},\ \nabla^2 A \in \K_{2, \log^2} \cap L^1,\ \nabla^3 A, \nabla^4 A \in L \log L\}
\ee
and
\be\lb{Y}
V \in Y=\dot W^{2, 1} \cap \K_{\log} \cap L^{3/2, 1}.
\ee

Also let $X_0$ and $Y_0$ be the closures of the space of test functions $\mc D = C^\infty_c$ in $X$ and $Y$ respectively. There are differences (the local and distal properties, as well as local integrability) between the spaces $\K$, $\K_{\log}$, $\K_2$, and $\K_{2, \log^2}$ and the closure of $\mc D$ within them, see \cite{BecGol4}.

The following is our main result.

\begin{theorem}\lb{thm1} Consider a self-adjoint Hamiltonian
$$
H=-\Delta+U=-\Delta+\nabla A+V
$$
such that $A \in X_0$ and $V \in Y_0$ and assume that $0$ is neither an eigenvalue nor a resonance for $H$ or for $H_{-1} = -\Delta-U$; or alternatively let $A \in X$ and $V \in Y$ be small in norm. Then
$$
\|e^{itH} P_{ac} f\|_{L^\infty} \les |t|^{-3/2} \|f\|_{L^1}.
$$
\end{theorem}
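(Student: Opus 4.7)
The plan is to use Stone's formula
\[
e^{itH} P_{ac} f = \frac{1}{2\pi i}\int_0^\infty e^{it\lambda}\bigl[R_H(\lambda + i0) - R_H(\lambda - i0)\bigr] f\, d\lambda,
\]
reduce the perturbed resolvent to the free one via
\[
R_H(z) = R_0(z) - R_0(z) U R_H(z), \qquad U = i(A\nabla + \nabla A) + V,
\]
and inherit the $|t|^{-3/2}$ decay from the free dispersive estimate $\|e^{-it\Delta}\|_{L^1 \to L^\infty} \les |t|^{-3/2}$.

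First I would rewrite $U = 2iA\cdot\nabla + i(\div A) + V$ so that $H$ takes the standard magnetic Schr\"{o}dinger form. Following the Wiener-algebra strategy developed for scalar potentials, the idea is to take a Fourier transform in $\lambda$ and work in a translation-invariant algebra of operator-valued functions of $t$ (e.g.\ an $L^1(\R_t;\B(L^1,L^\infty))$-type space) in which the free evolution acts with unit norm. In such an algebra one only needs to verify that $T(\lambda) = R_0(\lambda + i0)U$ is an element and that $I + T$ is invertible; the desired $L^1\to L^\infty$ bound for $e^{itH}P_{ac}$ then follows by composition with the free bound.

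The hard part will be the magnetic gradient term $A\cdot\nabla$, which is unbounded and not controlled by a Kato norm on its own. I would shift the derivative via integration by parts,
\[
R_0(z)\,(A\cdot\nabla) = -(\nabla R_0(z))\cdot A + R_0(z)\,(\div A),
\]
and use that in three dimensions $\nabla R_0(z)$ has a kernel essentially of size $|x-y|^{-2}$, so its mapping properties on weighted Kato-type spaces are governed by $A\in\K_{\log}$ and $A\in\K_{2,\log^2}$ --- precisely the conditions encoded in (\ref{X}). Further iterations of the resolvent identity redistribute derivatives onto $A$, which is why the hypotheses on $\nabla^2 A$, $\nabla^3 A$ and $\nabla^4 A$ enter; after finitely many iterations the remainder becomes compact (of Hilbert--Schmidt type), so the Fredholm alternative applies at each fixed $\lambda$.

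The remaining obstacle is the threshold at $\lambda = 0$. In three dimensions $R_0(0)$ still has the well-defined kernel $(4\pi|x-y|)^{-1}$, but $(I + R_0(0)U)^{-1}$ can fail to exist if $0$ is an eigenvalue or resonance. The hypothesis that this is ruled out for both $H$ and $H_{-1} = -\Delta - U$ covers the two signs that appear in the symmetrized factorization $U = U_1 U_2$ used to make the inversion symmetric; combined with the high-energy smallness of $R_0(\lambda \pm i0) U$ --- or, in the alternative statement, with the small-norm Born series at all energies --- this yields uniform invertibility for $\lambda \ge 0$, and hence the global $|t|^{-3/2}$ estimate. The replacement of $X,Y$ by the closures $X_0, Y_0$ of test functions is what ensures the compactness and continuity in $\lambda$ needed at the endpoint.
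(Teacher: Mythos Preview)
Your outline captures the broad shape (Stone's formula, Wiener-algebra invertibility, Fredholm alternative at the threshold), but there is a genuine gap at the central step: you propose to put $T(\lambda)=R_0(\lambda+i0)U$ itself into the Wiener algebra and invert $I+T$. This does not work in the magnetic case. By (\ref{derivative}) the kernel of $\nabla R_0(\lambda^2)$ has two pieces, one of size $|x-y|^{-2}$ (which is what you describe) and one of size $\lambda|x-y|^{-1}$. After taking the Fourier transform in $\lambda$, the second piece contributes a $\delta'$ in the $\rho$ variable, i.e.\ a distribution that is not a measure. Hence $R_0\nabla A$ is \emph{not} an element of $\widehat\U(L^\infty)$ or any related $L^1_\rho$-based algebra, and your integration-by-parts identity $R_0(A\cdot\nabla)=-(\nabla R_0)\cdot A+R_0(\div A)$ does not by itself cure this: you have merely moved the bad $\lambda$-growth from one factor to the other. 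The paper makes exactly this point after introducing $\U(X,Y)$.

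The paper's remedy is structural: rather than $I+R_0U$, one inverts $I-T$ with $T=R_0UR_0U$, and proves directly that the \emph{bilinear} object $T_1=R_0\nabla A\,R_0\nabla A^\#$ lies in $\widehat\U(L^\infty)\cap\widehat\U(\K^*_{\log})$ (Proposition~\ref{main_bilinear}). This is the technical heart. The two powers of $\lambda$ coming from the two gradient factors are turned into $\partial_\rho^2$ on the ellipsoidal averages $B_k(\rho)$, and the resulting singularities of type $r_1^{-3}$, $r_2^{-3}$, $(\rho^2-r^2)^{-3/2}$ are shown to cancel term by term in confocal elliptical coordinates; the residual pieces are controlled precisely by the norms on $\nabla^2 A$, $\nabla^3 A$, $\nabla^4 A$ that appear in (\ref{X}). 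Your phrase ``further iterations redistribute derivatives onto $A$'' is too vague to stand in for this: the derivatives are not redistributed by iteration but by integrating by parts in $\rho$ on a fixed ellipsoid, and the $L\log L$ conditions on $\nabla^3 A,\nabla^4 A$ arise from specific logarithmic endpoint terms in that computation (Lemmas~\ref{lema2log} and~\ref{lema3log}). Finally, the role of $H_{-1}=-\Delta-U$ is not a symmetrized factorization $U=U_1U_2$ as you suggest; it enters because $g=Tg$ only implies that either $g$ or $g-R_0Ug$ lies in $\Ker(I+R_0U)$ or $\Ker(I-R_0U)$, so both signs must be excluded at $\lambda=0$.
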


%\begin{theorem}\lb{thm2}
%	Consider a self-adjoint Hamiltonian
%	$$
%	H=-\Delta+U=-\Delta+\nabla A+V
%	$$
%	such that $A \in X_0$ and $V \in Y_0$ and assume that $0$ is neither an eigenvalue nor a resonance for $H$ or for $\tilde H = -\Delta-U$; or alternatively let $A \in X$ and $V \in Y$ be small in norm. Then
%	$$
%	\|\cos(t \sqrt H) P_{ac} f\|_{L^\infty} \les |t|^{-1} \|f\|_{\dot W^{2, 1}},\ \bigg\|\frac{\sin(t \sqrt H) P_{ac}}{\sqrt H} f\bigg\|_{L^\infty} \les |t|^{-1} \|f\|_{\dot W^{1, 1}}.
%	$$
%\end{theorem}

For more estimates along the same lines, proved in the case of a scalar potential, see \cite{BecGol2}, \cite{BecGol3}, \cite{BecGol4}, and \cite{BecChe}. Such estimates, including reversed Strichartz inequalities and estimates for the wave, Klein--Gordon, and other equations, can now be proved for equations with a short-range magnetic potential, using the methods employed in this paper.

We require four derivatives for $A$, but the decay condition $A \in \mc A$ is not actually needed. A more careful proof should only require $V \in \K_{\log}$, but in the interest of brevity we state condition (\ref{Y}) as above. All our size conditions are of $L^p$ type and none involves pointwise decay; nonetheless, they correspond roughly to inverse square decay, meaning we are in the short-range case (the potential decays at infinity strictly faster than $|x|^{-1}$).

\subsection{The spectrum} The spectral measure of a self-adjoint operator, such as $H$, is supported on $\R$ and may have absolutely continuous, singular continuous, and point parts: $\sigma(H)=\sigma_{ac}(H) \cup \sigma_{sc}(H) \cup \sigma_p(H)$. To the three parts of the spectrum correspond orthogonal projections $P_{ac}$, $P_{sc}$, and $P_p$.

The results enable a comparison between $H$ and the free Laplacian $H_0=-\Delta$. In the free case, $\sigma(H_0)=\sigma_{ac}(H_0)=[0, \infty)$ and there is no singular continuous or point spectrum.

The absolutely continuous spectrum is $\sigma_{ac}=[0, \infty)$, same as in the free case $H_0=-\Delta$. The dispersive estimates we prove only hold for this part of the spectrum, which is the only one present in the free case.

In the perturbed case, under quite general assumptions there is no singular continuous spectrum. The point spectrum corresponds to the eigenvalues of $H$, which in general may be present.

The effect of the Schr\"{o}dinger or wave propagators on $P_{ac} L^2$ is dispersion and decay, just like in the free case, which is different from their effect on the bound states $P_p L^2$, which is oscillation without decay.

We call $\lambda$ an \emph{eigenvalue} of $H$ if these exists a corresponding nonzero eigenstate $f \in L^2$ such that $Hf=\lambda f$. The presence of eigenvalues in the continuous spectrum can destroy dispersive estimates. However, \cite{KocTat} proved that there are no eigenvalues inside the continuous spectrum, meaning in $(0, \infty)$.

This leaves open the possibility that $0$ is an eigenvalue, which can happen non-generically (see \cite{BecGol4} and the appendix to this paper) even if the potential is smooth and compactly supported. The threshold of the spectrum $0$ could also be a \emph{resonance}, meaning that the equation $Hf=0$ has nonzero solutions (also called resonances) $f \in L^{3, \infty} \setminus L^2$. Either of these possibilities could entail some interactions with the continuous spectrum and a different rate of decay of solutions, even after projecting away the eigenstates or resonances. Eigenstates and resonances at zero energy make a separate case-by-case analysis necessary, as carried out in \cite{ErdSch}, \cite{Yaj}, or \cite{Bec2}.

To simplify the analysis, in this paper we assume that $0$ is neither an eigenvalue nor a resonance, but a regular point of the spectrum in the meaning of Definition \ref{regular}:
\begin{assumption}\lb{a1} $0$ is neither an eigenvalue nor a resonance for the Hamiltonian $H$.
\end{assumption}

Note that Assumption \ref{a1} is required for both $H$ and $H_{-1}=-\Delta-U$. More investigations are necessary to determine whether the second condition is indeed necessary or superfluous.

\subsection{History of the problem}

In the presence of only a scalar potential ($A=0$), decay estimates for the Schr\"{o}dinger equation were proved in \cite{JoSoSo}, while for the wave equation (albeit requiring a Schwartz class potential $V$) they were proved in \cite{Bea}. This was followed by various improvements, including \cite{BecGol1}.

Several known dispersive estimates for equation (\ref{sch}) and the wave equation use the limiting absorption principle in the proof --- the fact that along the absolutely continuous spectrum the resolvent takes limiting values that are not bounded operators on $L^2$, but still belong to $B(X, X^*)$ for some Banach space $X$ (usually a weighted $L^2$ space).

The limiting absorption principle works together with the method of $H_0$-smooth perturbations, introduced in \cite{Kat}, to prove self-adjointness and $L^2$ in time smoothing estimates. This was later extended in \cite{ErGoSc1} to Strichartz estimates.

Strichartz and smoothing estimates are known for Schr\"{o}dinger and wave equations with magnetic potentials, due to works such as \cite{ErGoSc1}, \cite{ErGoSc2}, or \cite{DFVV} for Schr\"{o}dinger's equation and \cite{DanFan2} and \cite{Dan} for the wave equation. In \cite{DanFan1}, $t^{-1}$ decay estimates are proved for the wave equation with a small magnetic potential, assuming that the initial data are in weighted $H^s$ spaces.

In \cite{ErGoSc2} the authors assert that ``$L^1 \to L^\infty$ dispersive bounds are currently unknown for any $A \ne 0$'', meaning in the presence of a nontrivial magnetic potential. The situation is still the same, thus the current results are completely new.

Our proof uses the method introduced in \cite{Bec1} and \cite{BecGol1}. This involves estimates (\ref{R}) and (\ref{RR}), which are stronger than the limiting absorption principle.

\section{Preliminaries}
\subsection{Definitions and notations}
In addition to the usual Lebesgue spaces we use the Orlicz space
\be\lb{llogl}
L \log L = \{f \mid \int_{\R^3} |f|\langle \log |f|\rangle < \infty\}.
\ee

Let
\be\lb{kk}
\KK =  \{f \mid \sup_{y \in \R^3} \int_{\R^3} \frac {|f(x)| \dd x}{|x-y|^2} < \infty\},\ \|f\|_{\KK} = \sup_{y \in \R^3} \int_{\R^3} \frac {|f(x)| \dd x}{|x-y|^2}.
\ee
We also use the following logarithmic modifications of $\K$ and $\KK$:
\be\lb{klog}
\K_{\log} = \{f \mid \sup_{y \in \R^3} \int_{\R^3} \frac {|f(x)| \langle \log |x-y| \rangle\dd x}{|x-y|} < \infty\},\ \|f\|_{\K_{\log}} = \sup_{y \in \R^3} \int_{\R^3} \frac {|f(x)| \langle \log |x-y| \rangle \dd x}{|x-y|},
\ee
\be\lb{kklog}
\K_{2, \log} =  \{f \mid \sup_{y \in \R^3} \int_{\R^3} \frac {|f(x)| \langle \log |x-y| \rangle\dd x}{|x-y|^2} < \infty\},\ \|f\|_{\K_{2, \log}} = \sup_{y \in \R^3} \int_{\R^3} \frac {|f(x)| \langle \log |x-y| \rangle \dd x}{|x-y|^2},
\ee
and
\be\lb{kklog2}
\K_{2, \log^2} =  \{f \mid \sup_{y \in \R^3} \int_{\R^3} \frac {|f(x)| \langle \log |x-y| \rangle^2\dd x}{|x-y|^2} < \infty\},\ \|f\|_{\K_{2, \log}} = \sup_{y \in \R^3} \int_{\R^3} \frac {|f(x)| \langle \log |x-y| \rangle^2 \dd x}{|x-y|^2},
\ee

We also use the dual of the Kato space $\K$, denoted by $\K^*$. The dual space $\K^*$ is spanned by functions of the form
\be\lb{kstar}
\frac {f(x)} {|x-x_0|},
\ee
for $x_0 \in \R^3$ and $f \in L^\infty$, see \cite{BecGol4}. Then
$$
\bigg\|\frac {f(x)} {|x-x_0|}\bigg\|_{\K^*} \leq \|f\|_\infty.
$$
Importantly, $\K^*$ is a Banach lattice (if $|f| \leq g$ and $g \in \K^*$, then $f \in \K^*$), same as $\K$ itself and all the spaces described here.

Some estimates below also require the following logarithmic modification of $\K^*$, denoted $\K^*_{\log}$: the space spanned by functions of the form
\be\lb{klogstar}
\frac {f(x) \langle \log |x-x_0| \rangle} {|x-x_0|},
\ee
where $x_0 \in \R^3$ and $f \in L^\infty$.

The product of a function in $\K_{\log}$ and a function in $\K^*_{\log}$ is in $L^1$. It is irrelevant whether they are dual to each other (but they are). Note that $\K_{\log} \subset \K$ and $\K^* \subset \K_{\log}^*$.

We also use the following notations:

* $W^{n, p}$ denote Sobolev spaces, while $\dot W^{n, p}$ denote homogeneous Sobolev spaces. $W^{n, L \log L}$ are Orlicz--Sobolev spaces. $H^s=W^{s, 2}$.

* $a \les b$ means that there exists some constant $C$ such that $|a| \leq C |b|$.

* $\langle \cdot \rangle = \sqrt {1+(\cdot)^2}$.

* $\B(X, Y)$ is the space of bounded linear operators from $X$ to $Y$.

The Fourier transform of a function $f$ on $\R$ is
$$
\widehat f(\xi) = [\mc F f](\xi) = \frac 1 {2\pi} \int_\R f(x) e^{-ix\xi} \dd x.
$$
\subsection{The free and perturbed resolvent}
Since $\div (Af) = (\div A) f + A \cdot \nabla f$, the operators $\nabla A$ and $A \nabla$ are equivalent modulo an extra scalar potential term of $\div A$. We can rewrite $H=-\Delta+i(A\nabla+\nabla A)+V$ in the form $H=-\Delta+i\nabla \tilde A+\tilde V$, where $\tilde A = 2A$ and $\tilde V = V - \nabla A$ or $H=-\Delta+i\tilde A \nabla+V^\#$, where $\tilde A = 2A$ and $V^\# = V + \nabla A$. Either way $H=-\Delta+U$.

The resolvent of $H$ is $R=(H-\lambda)^{-1}$, while the free resolvent is $R_0=(-\Delta-\lambda)^{-1}$. They are connected by the resolvent identity:
\be\lb{res_id}
R=R_0(I+UR_0)^{-1}=(I+R_0U)^{-1}R_0.
\ee

The operator $I+R_0U$ is called the Kato--Birman operator and will play an important part in the subsequent analysis.

The spectrum of the free Laplacian is $\sigma(-\Delta)=[0, \infty)$. In three space dimensions, the free resolvent has the integral kernel
$$
R_0(\lambda^2)(x, y) = \frac 1 {4\pi} \frac {e^{i\lambda|x-y|}}{|x-y|}.
$$
The resolvent lives on the Riemann surface of the square root. In particular, for $\lambda \in \R$, $\lambda^2 \geq 0$, the resolvent takes two limiting values along the spectrum $\sigma(-\Delta)=[0, \infty)$, namely $R_0(\lambda^2 \pm i0)$.

The limiting values are not bounded operators on $L^2$, but belong to $\B(L^{6/5, 2}, L^{6, 2})$ instead. This is an instance of the limiting absorption principle. The difference of the two values $R_0(\lambda^2 \pm i0)$ is the density of the absolutely continuous part of the spectral measure, by Stone's formula \cite{ReeSim}:
$$
\dd E_{\lambda} = \frac 1 {2i} (R(\lambda^2-i0) - R(\lambda^2+i0)) \dd \lambda.
$$

The free resolvent commutes with the Laplacian and with derivatives. The integral kernel of $R_0 \nabla = \nabla R_0$ is
\be\lb{derivative}
R_0(\lambda^2)\nabla(x, y) = \frac 1 {4\pi} \bigg(\frac {i\lambda e^{i\lambda|x-y|}}{|x-y|} - \frac {e^{i\lambda|x-y|}}{|x-y|^2} \bigg) \frac {x-y}{|x-y|}.
\ee

Also note that
$$
\partial_\lambda R_0(\lambda^2)(x, y) = \frac i {4\pi} e^{i\lambda|x-y|},\ \nabla \partial_\lambda R_0(\lambda^2)(x, y) = -\frac 1 {4\pi} \lambda e^{i\lambda|x-y|} \frac {x-y}{|x-y|}.
$$
%The integral kernel of $\partial_k \partial_\ell R_0$ is
%$$\begin{aligned}
%\partial_k \partial_\ell R_0(\lambda^2)(x, y) &= \frac 1 {4\pi} \bigg(\bigg(\frac {(i\lambda)^2 e^{i\lambda|x-y|}}{|x-y|} - 3 \frac {(i\lambda)e^{i\lambda|x-y|}}{|x-y|^2} + 3 \frac {e^{i\lambda|x-y|}}{|x-y|^3}\bigg) \frac {x_k-y_k}{|x-y|} \frac {x_\ell - y_\ell}{|x-y|} - \delta_k^\ell \frac {e^{i\lambda|x-y|}}{|x-y|^3}\bigg).
%\end{aligned}$$
%This expression includes singular integral operators, similar to second-order Riesz transforms. Note that
%$$
%\int_{S^2} 3x_k^2-1 = 0.
%$$

%The propagators for the wave equation (\ref{wav}) ($\frac {\sin(t \sqrt H)}{\sqrt H}$ and $\cos(t \sqrt H)$), Schr\"{o}dinger's equation (\ref{sch}) ($e^{itH}$), the Klein--Gordon equation (\ref{kg}), and other equations can be defined by functional calculus.

\begin{definition}\lb{regular} A point $\lambda$ in the support of $\sigma_{ac}(H)$ is a \emph{regular point} of the spectrum if $I + R_0(\lambda \pm i0) V$ is invertible.
\end{definition}
Under our conditions, all points in the absolutely continuous spectrum are regular, except for $\lambda=0$. Assuming that $0$ is neither an eigenvalue nor a resonance means $0$ is also regular.

The point part of the spectral measure is made of orthogonal projections on the eigenstates of $H$. The paper \cite{KocTat}, generalizing \cite{IonJer}, has shown that there are no eigenvalues embedded in the continuous spectrum, meaning on $(0, \infty)$.

In the appendix we prove that $H$ has finitely many negative eigenvalues.

%Let $f_n$, $1 \leq n \leq N$, be the eigenstates of $H$, corresponding to eigenvalues $-\lambda_n^2$.

\subsection{Algebraic structures}

\begin{definition} For any two Banach lattices $X$ and $Y$, consider the algebraic structure $\U(X, Y)$, defined as follows:
$$
\U(X, Y) := \{T(\rho, y, x) \mid \int_{\R} |T(\rho, y, x)| \dd \rho \in \B(X, Y)\}.
$$
\end{definition}
Define the composition of two operators $S \in \U(X, Y)$ and $T \in \U(Y, Z)$ by
$$
[T \circ S](\rho, z, x) = \int_{\rho_1+\rho_2=\rho} \int_{\R^3} T(\rho_1, z, y) S(\rho_2, y, x) \dd y \dd \rho_1.
$$
This structure is an algebroid, in the sense that if $S \in \U(X, Y)$ and $T \in \U(Y, Z)$ then $T \circ S \in \U(X, Z)$ and
$$
\|T \circ S\|_{\U(X, Z)} \leq \|T\|_{\U(Y, Z)} \|S\|_{\U(X, Y)}.
$$
In particular, $\U(X):=\U(X, X)$ is an algebra.

For $T \in \U(X, Y)$, its Fourier transform $\widehat T \in \widehat \U(X, Y)$ is
$$
\widehat T(\lambda, y, x) = \int_\R T(\rho, y, x) \dd \rho.
$$
Pointwise this defines a bounded operator $\widehat T(\lambda) \in \B(X, Y)$, but $\widehat \U(X, Y)$ is a proper subset of $L^\infty_\rho \B(X, Y)$. Still, the spaces $\widehat \U(X, Y)$ also form an algebroid under pointwise composition $[\widehat T  \widehat S](\lambda) = \widehat T(\lambda) \widehat S(\lambda)$.

We now relate these structures to our problem. The simplest example is the free resolvent $R_0$:
$$
\widehat {R_0(\lambda^2)}(\rho)(x, y) = \frac 1 {4\pi \rho} \delta_{|x-y|=\rho} = \frac {\sin(\rho\sqrt{-\Delta})}{\sqrt{-\Delta}}.
$$
Then
$$
\int_\R |\widehat R_0(\rho)(x, y)| \dd \rho = \frac 1 {4\pi|x-y|} = R_0(0)(x, y) = (-\Delta)^{-1}(x, y).
$$
Therefore $R_0(\lambda^2) \in \widehat U(\K, L^\infty) \cap \widehat U(L^1, \K^*)$. Here $\K^*$ is the dual of $\K$, see \cite{BecGol4} and below for its characterization.

Consequently, if $V \in \K$, $R_0(\lambda^2) V \in \widehat U(L^\infty) \cap \widehat U(\K^*)$ and $V R_0(\lambda^2) \in \widehat U(L^1) \cap \widehat U(\K)$. Note this is not true for $R_0 \nabla A$ or $A \nabla R_0$, which include a first derivative (a distribution that is not a measure).

Likewise,
$$
\widehat {\partial_\lambda R_0(\lambda^2)}(\rho)(x, y) = \frac i {4\pi} \delta_{|x-y|=\rho},
$$
so
$$
\int_\R |\widehat {\partial_\lambda R_0(\lambda^2)}(\rho)(x, y)| \dd \rho = \frac 1 {4\pi}.
$$
Consequently $\partial_\lambda R_0(\lambda^2) \in \widehat U(L^1, L^\infty)$.

For the perturbed resolvent, we start with a bilinear estimate, Proposition \ref{main_bilinear}, though in our use $U=U^\#$. Let
\be\lb{deft}
T(\lambda)(x, z) = [R_0(\lambda^2) U R_0(\lambda^2) U^\#](x, z).
\ee
Also consider the main term
\be\lb{tt}
T_1(\lambda)(x, z) = [R_0(\lambda^2) \nabla A R_0(\lambda^2) \nabla A^\#](x, z)
\ee
and
\be\lb{ttilde}
T_2(\lambda)(x, z) = \tilde T V^\#,\ \tilde T = [R_0(\lambda^2) \nabla A R_0(\lambda^2)](x, z)
\ee
(recall $\nabla A$ here means the composition of operators), as well as
$$
T_3(\lambda)(x, z) = [R_0(\lambda^2) V R_0(\lambda^2) \nabla A^\#](x, z), \text{ and } T_4(\lambda)(x, z) = [R_0(\lambda^2) V R_0(\lambda^2) V^\#](x, z),
$$
such that
\be\lb{t14}
T = T_1 + T_2 + T_3+T_4.
\ee
$\tilde T$ corresponds to the first term in the expansion (\ref{series}) and is less singular than the main term $T$.

One has
\be\lb{series}
R=(I-R_0UR_0U)^{-1}(R_0-R_0UR_0)=(I-T)^{-1}(R_0-\tilde T-R_0VR_0).
\ee
Because of the magnetic potential term $\nabla A$, it is easier to bound $R_0UR_0U$ than $R_0U$. If $T$ is sufficiently small in norm, then $(I-T)^{-1}$ is always invertible by means of a power series; otherwise we need to use Wiener's Theorem, see \cite{BecGol3}, to establish invertibility.

\begin{theorem}[Wiener]\lb{wiener} Suppose $T \in \U(X)$ is such that
	
$(*)$ $\lim_{\delta \to 0} \|T(\rho-\delta)-T(\rho)\|_{\U(X)} = 0$

$(**)$ $\lim_{R \to \infty} \|\chi_{|\rho| \geq R} T(\rho)\|_{\U(X)} = 0$.

If $I+\widehat T(\lambda)$ is invertible in $\B(X)$ for all $\lambda \in \R$, then $(I+T)^{-1}=I+S$ with $S \in \U(X)$.
\end{theorem}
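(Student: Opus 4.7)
The plan is to adapt the classical localization-and-patching proof of Wiener's theorem from the scalar Banach algebra $L^1(\R)$ to the operator-valued algebroid $\U(X)$.

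I would first prove a Riemann--Lebesgue-type estimate in this setting. Using the identity $\widehat{T(\cdot - \pi/\lambda)}(\lambda) = -\widehat{T}(\lambda)$ (translation by $\pi/\lambda$ introduces the factor $e^{-i\pi} = -1$) together with condition $(*)$ yields
$$
\|\widehat{T}(\lambda)\|_{\B(X)} \leq \tfrac{1}{2}\|T(\cdot - \pi/\lambda) - T\|_{\U(X)} \to 0 \quad \text{as } |\lambda| \to \infty.
$$
Combined with the continuity of $\lambda \mapsto \widehat{T}(\lambda)$ (which follows from strong continuity of translation on $\U(X)$) and pointwise invertibility, this gives that $(I+\widehat{T}(\lambda))^{-1}$ is uniformly bounded on $\R$ and tends to $I$ at infinity.

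Next I would construct local inverses on the Fourier side. For each $\lambda_0 \in \R$, set $V_0 = (I+\widehat{T}(\lambda_0))^{-1} \in \B(X)$. By continuity of $\widehat{T}$, there is a neighborhood $J_{\lambda_0}$ on which the Neumann series
$$
(I+\widehat{T}(\lambda))^{-1} = V_0 \sum_{k=0}^\infty \bigl(-V_0[\widehat{T}(\lambda) - \widehat{T}(\lambda_0)]\bigr)^k
$$
converges uniformly. Pick a smooth cutoff $\chi_{\lambda_0}$ supported in $J_{\lambda_0}$ whose inverse Fourier transform $\check\chi_{\lambda_0}$ lies in $L^1(\R)$; convolution by $\check\chi_{\lambda_0}$ in the $\rho$-variable preserves $\U(X)$. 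Localizing each factor $\widehat{T}(\lambda) - \widehat{T}(\lambda_0)$ against $\chi_{\lambda_0}$ turns it into the Fourier transform of an element of $\U(X)$ (the constant piece $\widehat{T}(\lambda_0)$ contributing $\widehat{T}(\lambda_0)\,\check\chi_{\lambda_0}$), and iterating the series produces $S_{\lambda_0} \in \U(X)$ whose Fourier transform equals $\chi_{\lambda_0}(\lambda)[(I+\widehat{T}(\lambda))^{-1}-I]$. For the piece at infinity, condition $(**)$ lets me split $T = T_{\leq R} + T_{>R}$ with $\|T_{>R}\|_{\U(X)} < 1/2$ for $R$ large, inverting directly by Neumann series in $\U(X)$.

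To conclude, by the Riemann--Lebesgue estimate the region $\{|\widehat{T}(\lambda)\|_{\B(X)} < 1/2\}$ contains the complement of a compact interval, so the remaining compact set is covered by finitely many neighborhoods $J_{\lambda_1},\dots,J_{\lambda_N}$; take a smooth partition of unity $\sum_i \chi_i = 1$ subordinate to this cover, each $\chi_i$ having inverse Fourier transform in $L^1(\R)$, and set $S := \sum_i S_{\lambda_i}$. Then $S \in \U(X)$ with $\widehat{S} = (I+\widehat{T})^{-1} - I$, and injectivity of the Fourier transform on $\U(X)$ promotes the pointwise identity $(I+\widehat{T})(I+\widehat{S}) = I$ to $(I+T)\circ(I+S) = I$ in the algebroid. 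The main obstacle throughout is ensuring that the local pieces, and hence their sum, lie in $\U(X)$ rather than merely in $\widehat{\U}(X)$; this relies on the fact that multiplication by a smooth cutoff on the $\lambda$-side is implemented by convolution with an $L^1(\R)$ kernel on the $\rho$-side, which preserves both the $\U(X)$-norm and the algebra structure.
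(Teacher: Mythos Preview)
The paper does not prove this theorem; it quotes it from \cite{BecGol3} and uses it as a black box. So there is no in-paper proof to compare against, and your outline follows the classical Bochner--Phillips localization strategy, which is indeed how the result is proved in the cited reference.

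That said, there is a genuine gap in your local-inverse step. You arrange that the Neumann series
\[
V_0\sum_{k\ge 0}\bigl(-V_0[\widehat T(\lambda)-\widehat T(\lambda_0)]\bigr)^k
\]
converges \emph{pointwise in $\B(X)$} on $J_{\lambda_0}$, and then assert that localizing each factor by $\chi_{\lambda_0}$ produces an element of $\U(X)$. But for the series to converge in $\U(X)$ you need $\|V_0\|\cdot\|\chi_{\lambda_0}(\lambda)[\widehat T(\lambda)-\widehat T(\lambda_0)]\|_{\widehat\U(X)}<1$, and this does \emph{not} follow from shrinking the support of $\chi_{\lambda_0}$: narrowing $\chi_{\lambda_0}$ leaves $\|\check\chi_{\lambda_0}\|_{L^1}$ fixed while spreading $\check\chi_{\lambda_0}$ out, so the convolution $\check\chi_{\lambda_0}*T$ does not approach $\check\chi_{\lambda_0}\,\widehat T(\lambda_0)$ in $\U(X)$. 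Pointwise continuity of $\widehat T$ is simply too weak here.

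The fix --- and this is the real role of condition $(**)$, not merely handling ``the piece at infinity'' --- is to first replace $T$ by $T_R=\chi_{|\rho|\le R}T$ with $\|T-T_R\|_{\U(X)}$ small. For $T_R$ one computes directly that if $\chi_{\lambda_0}(\lambda)=\eta((\lambda-\lambda_0)/\delta)$ with $\eta$ fixed, then
\[
\bigl\|\chi_{\lambda_0}(\lambda)[\widehat{T_R}(\lambda)-\widehat{T_R}(\lambda_0)]\bigr\|_{\widehat\U(X)}
\ \le\ \sup_{|\sigma|\le R}\|\check\eta(\cdot-\delta\sigma)-\check\eta\|_{L^1}\,\|T_R\|_{\U(X)}
\ \lesssim\ R\,\delta\,\|T\|_{\U(X)},
\]
which \emph{is} small once $\delta\ll 1/R$. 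With this in hand your Neumann series converges in $\U(X)$, and (after nesting cutoffs so that the outer partition function is $\equiv 1$ where the inner localizers act) the patching argument goes through. Your sketch has the right architecture, but as written the series is only summed in the wrong topology.
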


As the main intermediate step, we prove that the Fourier transform of $T_1$, $\widehat T_1(\rho)(x, z)$, belongs to $\U(L^\infty)\cap \U(\K^*_{\log})$, or otherwise stated $T_1 \in \widehat \U(L^\infty) \cap \widehat U(\K^*_{\log})$, see Proposition \ref{main_bilinear}. We next prove that $(I-R_0UR_0U)^{-1} \in \widehat \U(L^\infty) \cap \widehat \U(\K^*_{\log})$, eventually arriving at a characterization of the perturbed resolvent (Proposition \ref{prop_wiener}), namely
\be\lb{R}
R(\lambda^2) \in \widehat \U(\K, L^\infty) \cap \widehat U(L^1, \K^*_{\log})
\ee
and
\be\lb{RR}
\partial_\lambda R(\lambda^2) \in \widehat U(L^1, L^\infty),
\ee
almost the same as in the free case. This implies Theorem \ref{thm1}.

Before that, we introduce elliptical and cylindrical coordinates and state a series of lemmas.

\subsection{Elliptical and cylindrical coordinates}\lb{notations}
To describe the integrals below we introduce the following notations: for two fixed points $x$ and $z$ and $y \in \R^3$, let $r_1=|x-y|$, $\vec r_1=x-y$, $r_2=|y-z|$, $\vec r_2=y-z$, $\rho=r_1+r_2$, $r=|x-z|$.

Also let
$$
\Sigma_\rho=\{y\in\R^3 \mid \rho = r_1+r_2 = |x-y|+|y-z|\}.
$$
The surface $\Sigma_\rho$ is a rotation ellipsoid with the two shortest axes equal, nonempty and non-degenerate for $\rho>r=|x-z|$.

After fixing $x$, $z$, and $\rho$, choose a coordinate system in which
$$
x=(x_1, x_2, x_3)=(-r/2, 0, 0),\ y=(y_1, y_2, y_3),\ z=(z_1, z_2, z_3)=(r/2, 0, 0).
$$
Also let
$$
o=(0, 0, 0)
$$
be the center of the ellipsoid (the midpoint of the $xz$ segment) and let $A=(A_1, A_2, A_3)$.

The two distinct semiaxes of the ellipsoid $\Sigma_\rho$ have lengths
$$
b=\frac {\sqrt{\rho^2-r^2}} e2<a=\frac \rho 2.
$$
The equation of the ellipsoid is
$$
\frac{y_1^2}{a^2}+\frac{y_2^2+y_3^2}{b^2}=1 \text{ or } \frac {y_1^2}{\rho^2} + \frac {y_2^2+y_3^2}{\rho^2 - r^2} = \frac 1 4.
$$
We parametrize $\Sigma_\rho$ by means of elliptical coordinates
$$
y_1=a \cos \theta,\ y_2=b \sin \theta \cos \phi,\ y_3=b \sin \theta \sin \phi,
$$
where $\theta \in (0, \pi)$, $\phi \in [0, 2\pi)$. Then $\ds |y|=\frac 1 2 \sqrt {\rho^2 - r^2 \sin ^2 \theta}$. Note that $\theta$ is not $\widehat{xoy}$, which is rather given by
$$
\tan \widehat {xoy} = \frac b a \tan \theta.
$$ 
%The inward unit normal vector is $n=-\frac 1 {\sqrt{b^2 \cos^2\theta + a^2 \sin^2 \theta}} (b\cos \theta, a\sin\theta\cos\phi, a\sin\theta\sin\phi)$. Also note that
In these coordinates
$$\begin{aligned}
	\vec r_1 &= (-\frac r 2 - a \cos \theta, -b \sin \theta \cos \phi, -b \sin \theta \sin \phi),\ r_1 = \frac {\rho+r \cos \theta} 2,\\
	\vec r_2 &= (-\frac r 2 + a \cos \theta, b \sin \theta \cos \phi, b \sin \theta \sin \phi),\ r_2 = \frac {\rho-r \cos \theta} 2.
\end{aligned}$$
Thus $r_1-r_2=r\cos\theta$ is independent of $\rho$, meaning that for fixed $\theta$ and $\phi$ we obtain a hyperbolic arc by varying $\rho$; for fixed $\theta$ we obtain one sheet of a two-sheeted revolution hyperboloid. Furthermore,
$$
\frac {\partial r_1}{\partial \rho} = \frac {\partial r_2}{\partial \rho} = \frac 1 2,\ \frac {d \vec r_1}{d\rho} = \frac 1 2 (-\cos \theta, \frac \rho b \sin \theta \cos \phi, \frac \rho b \sin \theta \sin \phi),\ \frac {d \vec r_2}{d\rho} = \frac 1 2 (\cos \theta, \frac \rho b \sin \theta \cos \phi, \frac \rho b \sin \theta \sin \phi).
$$
%The surface measure on $\Sigma_\rho$ is
%$$
%\dd\sigma(y)= b \sin \theta \sqrt{b^2 \cos^2\theta + a^2 \sin^2 \theta} \dd \theta \dd \phi = \frac 1 4 \rho \sin \theta \sqrt {\rho^2 - r^2 \cos^2 \theta} \dd \theta \dd \phi = \frac 1 2 \rho \sin \theta \sqrt{r_1 r_2} \dd \theta \dd \phi.
%$$

%A surface integral on $\Sigma$ is differentiated according to the formula
%$$
%\frac {d}{d\rho} \int_{\Sigma_\rho} f(y) \dd \sigma = \int_{\Sigma_\rho} \partial_\rho f(y) + f(y) (\frac 1 \rho + \frac \rho {\rho^2-r^2 \cos^2 \theta}) \dd \sigma.
%$$
%However, this surface measure is not the one used in subsequent computations.

With a Jacobian factor of $J(y) \dd \mu =J(\rho, \theta, \phi) \dd \theta \dd \phi = (\rho^2 - r^2\cos^2\theta) \sin\theta \dd \theta \dd \phi = 4 r_1 r_2 \sin \theta \dd \theta \dd \phi$, for any integrable function $f$
$$
\int_{\R^3} f = \int_0^\infty \bigg(\int_{\Sigma_\rho} f(y) J(y) \dd \mu\bigg) \dd \rho = \int_0^\infty \bigg(\int_0^\pi \int_{-\pi}^\pi f(\rho, \theta, \phi) J(\rho, \theta, \phi) \dd \phi \dd \theta\bigg) \dd \rho
$$
and the differentiation formula for such an integral is
\be\lb{dif}
\frac d {d\rho} \int_{\Sigma_\rho} f(y) J(y) \dd \mu = \int_{\Sigma_\rho} \bigg(\partial_\rho f(y) + \frac {2\rho}{\rho^2-r^2 \cos^2 \theta} f(y)\bigg) J(y) \dd \mu.
\ee
Note that
$$
\frac {2\rho} {\rho^2-r^2 \cos^2 \theta} = \frac {r_1+r_2}{2r_1 r_2} = \frac 1 {2r_1} + \frac 1 {2r_2}.
$$
%This will be a source of terms that contain $r_1^{-3}$ or $r_2^{-3}$.

For each $\phi$, let $\partial_R = (\cos \phi) \partial_2 + (\sin \phi) \partial_3$ be the derivative in the radial direction, where
$$
R = \sqrt{y_2^2+y_3^2}=\frac 1 2\sqrt{\rho^2-r^2}\sin \theta,
$$
and let $y_0$ be the projection of $y$ on the $xz$ line; also, let $\int_{y_0}^y$ denote the integral on the line segment $y_0y$.

\subsection{Auxiliary results}
\begin{lemma}\lb{lema1} For any function $f \in \K_2$
	\be\lb{ineg}
	\int_{\rho \leq 2r} \frac {|f(y)| \dd y}{r \sqrt{\rho^2-r^2} \sin \theta} \les \|f\|_\KK.
	\ee
\end{lemma}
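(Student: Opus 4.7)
The natural starting point is to identify the geometric meaning of the denominator. From the elliptical parametrization $y_2 = b\sin\theta\cos\phi$, $y_3 = b\sin\theta\sin\phi$ one reads off
$$
R := \sqrt{y_2^2+y_3^2} = b\sin\theta = \tfrac12\sqrt{\rho^2-r^2}\,\sin\theta,
$$
so $R$ is precisely the distance from $y$ to the line through $x$ and $z$. The left-hand side of (\ref{ineg}) therefore equals $\frac{1}{2r}\int_{\rho\le 2r} |f(y)|/R\,\dd y$, and it suffices to prove
$$
\int_{\rho\le 2r}\frac{|f(y)|}{R}\,\dd y \les r\,\|f\|_\KK.
$$

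The tool that connects $R^{-1}$ (distance to a line) to $|y-y_0|^{-2}$ (the weight in the $\KK$ norm) is the exact one-dimensional integral
$$
\frac{1}{R} = \frac{1}{\pi}\int_\R \frac{\dd t}{(y_1-t)^2+R^2} = \frac{1}{\pi}\int_\R \frac{\dd t}{|y-(t,0,0)|^2},
$$
in which $(t,0,0)$ is an admissible basepoint for $\|f\|_\KK$. Inserting this and applying Fubini moves all the $y$-dependence into an integrand of exactly the form used in the definition of $\|f\|_\KK$. The obstacle is that the $t$-integral runs over all of $\R$, so a direct bound by $\int_\R \|f\|_\KK\dd t$ is impossible and the split must be done at a scale dictated by the support.

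The relevant geometric fact is the containment $\{\rho\le 2r\}\subset \{|y|\le r\}$ (in particular $|y_1|\le r$), which follows from $|y|^2 = a^2\cos^2\theta + b^2\sin^2\theta \le a^2 = \rho^2/4 \le r^2$; this is the one observation that makes everything work. Using it, I would split the $t$-integration at $|t|=3r$. In the near range $|t|\le 3r$, Fubini and the $\KK$-bound give a contribution $\les \int_{|t|\le 3r}\|f\|_\KK\dd t \les r\|f\|_\KK$. In the far range $|t|>3r$, one has $|y-(t,0,0)|\ge |t|-r\ge 2r$ uniformly in $y$, so the $t$-integral contributes at most $\int_{|t|>3r}(|t|-r)^{-2}\dd t \les 1/r$, and the remaining $y$-integral is $(1/r)\int_{\rho\le 2r}|f(y)|\dd y$; using $|y|\le r$ once more and the $\KK$-bound with basepoint $0$ gives $\int_{\rho\le 2r}|f|\dd y \le r^2\|f\|_\KK$, so the far-range piece is also $\les r\|f\|_\KK$. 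Adding the two contributions and dividing by $2r$ yields the lemma. The essential content is the identity for $1/R$ and the containment $\{\rho\le 2r\}\subset \{|y|\le r\}$; the rest is bookkeeping.
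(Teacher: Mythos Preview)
Your proof is correct and follows essentially the same approach as the paper: both identify $R=\tfrac12\sqrt{\rho^2-r^2}\sin\theta$ as the distance from $y$ to the $xz$-axis, exploit the containment $\{\rho\le 2r\}\subset\{|y|\le r\}$, and average the $\KK$ inequality over basepoints on the axis to produce the $1/R$ weight. The paper is marginally more economical in that it integrates only over the finite segment $[-10r,10r]$ and uses the arctangent lower bound directly, avoiding your far-range tail estimate, but the idea is the same.
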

\begin{proof}
	For $\rho \leq 2r$, we integrate the inequality
	$$
	\int_{\rho \leq 2r} \frac {|f(y)| \dd y}{|y-y_0|^2} \leq \|f\|_\KK
	$$
	over $y_0$ on the line segment $\{y_0=(y_{01}, 0, 0) \mid y_0 \in [-10r, 10r]\}$. Letting $R=\sqrt {y_2^2+y_3^2}=\frac 1 2 \sqrt{\rho^2-r^2} \sin \theta$, the integral is an arctangent:
	$$
	\frac 1 R \arctan \frac a R \bigg|_{a=y_1-10r}^{a=y_1+10r} > \frac 1R
	$$
	if $\rho \leq 2r$. The conclusion follows.
\end{proof}

%In cylindrical coordinates the ellipsoid inside $\Sigma_\rho$ is given by
%$$
%y_1 \in [-\frac \rho 2, \frac \rho 2],\ R \leq \frac {\sqrt{\rho^2-r^2}} 2 \sqrt {1-\frac {y_1^2} {(\rho/2)^2}}.
%$$
%Set $\rho=2r$. With no prejudice, we prove our estimate on a larger set: instead of the ellipsoid inside $\Sigma_{2r}$, the cylinder
%$$
%C_{2r, r} = \{y_1 \in [-r, r],\ R \leq r\}.
%$$
\begin{lemma}\lb{lema2} For any function $f \in \dot W^{1, 1}$
	$$
	\int_{\R^3} \frac {|f(y)|} R \les \|\nabla f\|_{L^1}.
	$$
\end{lemma}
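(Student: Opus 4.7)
The plan is to reduce the estimate to a trace-type inequality along two-dimensional planes through the $y_1$-axis (the $xz$-axis in the paper's notation), on which $R$ vanishes. First I would switch to cylindrical coordinates about the $y_1$-axis by writing $y=(y_1,R\cos\phi,R\sin\phi)$ with $dy = R\,dy_1\,dR\,d\phi$. The factor of $R$ cancels the $1/R$ weight, so
\[
\int_{\R^3}\frac{|f(y)|}{R}\,dy = \int_0^{2\pi}\!\!\int_\R\!\!\int_0^\infty |f(y_1,R\cos\phi,R\sin\phi)|\,dR\,dy_1\,d\phi.
\]
After pairing the angles $\phi$ and $\phi+\pi$, the inner double integral becomes, for each $\phi\in[0,\pi)$, the $L^1$ integral of $f$ over the full two-plane $\Pi_\phi$ spanned by the $y_1$-axis and the direction $(0,\cos\phi,\sin\phi)$.

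The core step is the one-dimensional fundamental theorem of calculus in the direction normal to $\Pi_\phi$. For a test function $f\in\mc D$ (or, by standard density/truncation, any representative of $\dot W^{1,1}$ that decays at infinity, using for instance $f\in L^{3/2}$ by Sobolev embedding), writing $\nu$ for a unit normal to $\Pi_\phi$ and $p\in\Pi_\phi$,
\[
f(p)=-\int_0^\infty \partial_\nu f(p+s\nu)\,ds=\int_{-\infty}^0 \partial_\nu f(p+s\nu)\,ds,
\]
so that $|f(p)|\le \tfrac12\int_\R|\nabla f(p+s\nu)|\,ds$. Integrating over $p\in\Pi_\phi$ with surface measure and using Fubini,
\[
\int_{\Pi_\phi}|f|\,d\sigma \le \tfrac12\int_{\R^3}|\nabla f(y)|\,dy = \tfrac12\|\nabla f\|_{L^1}.
\]

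Combining this with the cylindrical decomposition yields
\[
\int_{\R^3}\frac{|f(y)|}{R}\,dy=\int_0^\pi\!\int_{\Pi_\phi}|f|\,d\sigma\,d\phi \le \tfrac{\pi}{2}\|\nabla f\|_{L^1},
\]
which is the claimed bound. The only genuinely delicate point is justifying the fundamental theorem of calculus representation for arbitrary $f\in\dot W^{1,1}$; I would handle this by mollification and truncation, relying on the Sobolev embedding $\dot W^{1,1}(\R^3)\hookrightarrow L^{3/2}(\R^3)$ to ensure that the boundary terms at infinity vanish in a suitable sense. No cancellation is needed beyond this, since both sides are monotone in $|f|$.
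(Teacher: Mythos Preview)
Your proof is correct and shares the same opening move as the paper's: pass to cylindrical coordinates about the $y_1$-axis so that the Jacobian factor $R$ cancels the weight $1/R$, reducing the question to bounding $\int_0^{2\pi}\int_\R\int_0^\infty |f|\,dR\,dy_1\,d\phi$ by $\|\nabla f\|_{L^1}$. From there you and the paper diverge in which direction you apply the fundamental theorem of calculus. The paper works directly with $|f|$ (using $|\nabla|f||=|\nabla f|$) and integrates radially along each half-line from the axis, writing $|f(R,\phi,y_1)|=-\int_R^\infty \partial_R|f|\,d\tilde R$; Fubini then produces the extra factor of $\tilde R$ that rebuilds the volume element and yields constant $1$. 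You instead glue the half-planes at $\phi$ and $\phi+\pi$ into a full plane $\Pi_\phi$ and integrate in the normal direction, invoking the standard trace inequality $\int_{\Pi_\phi}|f|\le\tfrac12\|\nabla f\|_{L^1}$, which gives constant $\pi/2$. Both arguments are one-line applications of FTC after the coordinate change; the paper's radial version is slightly more direct and gives a sharper constant, while yours has the appeal of reducing to a familiar codimension-one trace bound.
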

\begin{proof}
	We use cylindrical coordinates $R$, $\phi$, and $y_1$. The Jacobian factor for this coordinate change is $R$.
	
	If $f \in \dot W^{1, 1}$ then $|f| \in \dot W^{1, 1}$ as well and its norm is no larger, because $|\nabla |f|| = |\nabla f|$. In cylindrical coordinates
	\be\lb{cyl}
	\int_{\R^3} \frac {|f(y)|} R = \int_\R \int_0^{2\pi} \int_0^\infty |f(y)| \dd R \dd \phi \dd y_1.
	\ee
	Write $y=(R, \phi, y_1)$ and $|f(R, \phi, y_1)| = - \int_R^\infty \partial_R |f(\tilde R, \phi, y_1)| \dd \tilde R$ and then integrate:
	$$
	(\ref{cyl}) \leq \int_\R \int_0^{2\pi} \int_0^\infty R |\partial_R f| \dd R \dd \phi \dd y_1 \leq \|\nabla f\|_{L^1}.
	$$
\end{proof}

A similar result is still true with a logarithmic correction. To study this, we use dual Orlicz spaces defined on a domain $D$ in $\R^2$, namely the exponential class $L^{exp}$
$$
L^{\exp} = \{f \mid \exists c>0 \int_D e^{c|f|} < \infty\}
$$
and $L \log L$
$$
L \log L = \{f \mid \int_D |f|(1+\log|f|) < \infty\}.
$$
With our definition $L^1 \subset L \log L$. The norms are
$$
\|f\|_{L^{\exp}(D)} = \inf \{c>0 \mid \int_D e^{|f|/c}-1 \leq 1\}
$$
and
$$
\|f\|_{L \log L(D)} = \|f\|_{L^1} + \inf \{c>0 \mid \int_D (|f|/c) \log(|f|/c) - |f|/c \leq 1 \}.
$$

These dual spaces satisfy H\"{o}lder's inequality
$$
\int_D |fg| \les \|f\|_{L^{\exp}(D)} \|g\|_{L \log L(D)}.
$$

Note that $\log R$ is in the exponential class on the disk, with norm of size $r_0^2$, where $r_0$ is the disk's radius. To avoid complications, we use disks of radius at most $1$.

\begin{lemma}\lb{lema2log} For any function $f \in \dot W^{1, L \log L}$
	$$
	\int_{D(0, R_0)} \frac {|f(y)| |\log R|} R \les \|\nabla f\|_{L \log L} + \langle \log R_0 \rangle \|\nabla f\|_{L^1}.
	$$
	Here $D(0, R_0) \subset \R^2$ is a disk of radius $R_0$.
\end{lemma}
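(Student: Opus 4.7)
The plan is to closely mimic the proof of Lemma \ref{lema2}, now adapted to two dimensions and accommodating the $|\log R|$ weight. First I would pass to polar coordinates $(R, \phi)$ centered at the origin of $\R^2$, so the Jacobian $R$ cancels the $1/R$ in the integrand and the left-hand side becomes
$$
\int_0^{2\pi}\int_0^{R_0} |f(R,\phi)|\,|\log R|\,dR\,d\phi.
$$
Then, exactly as in Lemma \ref{lema2}, write $|f(R,\phi)| \leq \int_R^\infty |\partial_{\tilde R} f(\tilde R,\phi)|\,d\tilde R$ using the FTC together with the fact that $f$ vanishes at infinity on each radial ray (a standard property for the relevant completion of test functions in $\dot W^{1, L\log L}(\R^2)$). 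Swap the order of integration by Fubini and bound the inner integral
$$
\int_0^{\min(\tilde R, R_0)} |\log R|\,dR \les \min(\tilde R, R_0)\,\langle \log \min(\tilde R, R_0)\rangle
$$
by elementary calculus.

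Next I would split the $\tilde R$-integral into the regions $\tilde R > R_0$ and $\tilde R \leq R_0$. On the outer region, $\min(\tilde R, R_0) = R_0$ and the factor $1/\tilde R \leq 1/R_0$ restores the polar Jacobian $\tilde R$, reducing the contribution to $\langle \log R_0 \rangle \int_{|y|>R_0} |\nabla f(y)|\,dy \leq \langle \log R_0 \rangle \|\nabla f\|_{L^1}$, which is the second term on the right-hand side.

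On the inner region, reintroducing $\tilde R$ to restore the Jacobian shows that the contribution is bounded by $\int_{D(0,R_0)} \langle \log |y|\rangle |\nabla f(y)|\,dy$. I would split this once more into $|y|\leq 1$ and $1 < |y| \leq R_0$: on the annular part $\langle \log |y|\rangle \leq \langle \log R_0 \rangle$, producing again a term absorbed by $\langle \log R_0 \rangle \|\nabla f\|_{L^1}$, while on the unit disk I would apply Orlicz H\"older duality, using that $\log R \in L^{\exp}(D(0,1))$ with bounded norm (as noted in the paragraph preceding the lemma) against $\nabla f \in L\log L$, yielding $\|\nabla f\|_{L\log L}$.

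The main obstacle is bookkeeping: the factor $R_0\langle \log R_0\rangle$ from the outer region must be absorbed by $1/\tilde R$ so as not to blow up, and the Orlicz H\"older step only produces a bounded $\|\log R\|_{L^{\exp}}$ when restricted to a disk of radius $\leq 1$, which is why I split at radius $1$. A secondary point is justifying $f(\infty)=0$ along radial rays for $f$ in the natural function class; this is the analogue of the Sobolev embedding argument implicit in Lemma \ref{lema2}. Everything else is routine manipulation following the earlier lemma's template.
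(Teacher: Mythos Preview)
Your proposal is correct and follows essentially the same approach as the paper: pass to polar coordinates so the Jacobian cancels $1/R$, shift $|\log R|$ onto $\partial_R f$ via the fundamental theorem of calculus (the paper phrases this as integration by parts, using a smooth cutoff at radius $\sim R_0$ instead of your explicit split at $\tilde R=R_0$), and then invoke the Orlicz H\"older pairing of $\log R\in L^{\exp}$ against $\nabla f\in L\log L$ on a disk of radius at most $1$, with the remaining contributions absorbed into $\langle\log R_0\rangle\|\nabla f\|_{L^1}$. The only cosmetic difference is that the paper first reduces to $R_0\le 1$ and handles the cutoff error via Lemma~\ref{lema2}, whereas you split at $\tilde R=R_0$ and at $|y|=1$ directly; both routes are equivalent.
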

\begin{proof}
	Without loss of generality, we can take $R_0 \leq 1$, since in the region outside this radius the statement reduces to the previous Lemma \ref{lema2}.
	
	We use a smooth cutoff function $\chi$. In cylindrical coordinates
	$$
	\int_{D(0, 2R_0)} \frac {|\log R| |f(y)| \chi(|y|/r_0)} R = -\int_0^{2\pi} \int_0^{2R_0} \log R\, |f(y)| \chi(|y|/R_0) \dd R \dd \phi.
	$$
	Integrating by parts,
	$$
	\int_0^{2R_0} \log R\, |f(y)| \chi(|y|/R_0) \dd R \les \int_0^{r_0} (\log R - 1) (\partial_R |f(y)| + |f(y)| \frac {|\chi'|}{R_0}) R \dd R.
	$$
	In the first term, $\log R$ is in the exponential class, of bounded norm, so it suffices to take $\nabla f$ in $L \log L$, while the second term is bounded by $(1+|\log R_0|) \int_{\R^2} \frac f R$ and, using Lemma \ref{lema2}, by $(1+|\log R_0|)\|\nabla f\|_{L^1}$.
\end{proof}

\begin{lemma}\lb{lema3} For any function $f$ for which the right-hand side is finite
$$
\int_{\R^3} \frac {|f(y)|} {r_1 R} \les \int_{\R^3} \frac {|\nabla f(y)|} R.
$$
\end{lemma}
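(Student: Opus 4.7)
The plan is to reduce to a two-dimensional Hardy inequality by switching to cylindrical coordinates about the $y_1$-axis (the line through $x$ and $z$ chosen in Section \ref{notations}). Because $x = (-r/2, 0, 0)$ sits on this axis, the point $x$ lies on the boundary of every half-plane $\{\phi = \text{const}\}$, and the quantity $r_1 = |x-y|$ becomes the in-plane Euclidean distance from $x$.

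In cylindrical coordinates $(y_1, R, \phi)$ the volume element is $R \dd R \dd y_1 \dd \phi$, so the factor of $R$ in the denominator on the left cancels the Jacobian:
$$
\int_{\R^3} \frac{|f(y)|}{r_1 R} \dd y = \int_0^{2\pi} \int_\R \int_0^\infty \frac{|f(y_1, R, \phi)|}{r_1} \dd R \dd y_1 \dd \phi,
$$
and similarly $\int_{\R^3} |\nabla f|/R = \int_0^{2\pi} \int_\R \int_0^\infty |\nabla f| \dd R \dd y_1 \dd \phi$. It thus suffices to prove, for each fixed $\phi$, the planar inequality
$$
\int_\R \int_0^\infty \frac{|f(y_1, R)|}{\sqrt{(y_1+r/2)^2+R^2}} \dd R \dd y_1 \les \int_\R \int_0^\infty |\nabla_{(y_1, R)} f| \dd R \dd y_1,
$$
since $|\partial_R f|^2 + |\partial_{y_1} f|^2 \leq |\nabla f|^2$ bounds the planar gradient by the full $\R^3$ gradient.

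For this planar inequality I would introduce polar coordinates $(s, \alpha)$ centered at $x$ with $\alpha \in (0, \pi)$ parametrizing the upper half-plane $\{R > 0\}$, so that $y = x + (s\cos\alpha, s\sin\alpha)$. The radial Jacobian $s \dd s \dd \alpha$ again cancels the factor $1/s = 1/r_1$ in the integrand, leaving
$$
\int_0^\pi \int_0^\infty |f| \dd s \dd \alpha \leq \int_0^\pi \int_0^\infty \int_s^\infty |\partial_t f(x + t(\cos\alpha, \sin\alpha))| \dd t \dd s \dd \alpha = \int_0^\pi \int_0^\infty t\, |\partial_t f| \dd t \dd \alpha,
$$
where I used the fundamental theorem of calculus along each outgoing ray, justified by the assumption that the right-hand side is finite (which ensures $f$ decays at infinity along almost every ray). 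The last expression is exactly the planar integral of $|\partial_t f|$, bounded by the planar integral of $|\nabla_{(y_1, R)} f|$. Integrating in $\phi$ then yields the lemma.

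There is no real obstacle here: the key observation is that the chosen coordinate system already singles out $x$ as a point on the rotation axis, so a planar Hardy inequality with the singularity on the boundary of a half-plane is exactly what is needed, and this follows immediately from integration along rays emanating from the singular point.
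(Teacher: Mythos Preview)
Your proof is correct and is essentially the paper's own argument, only presented with an unnecessary intermediate step: passing first to cylindrical coordinates $(y_1,R,\phi)$ and then to polar coordinates $(s,\alpha)$ in each half-plane is exactly the same as going directly to spherical coordinates $(r_1,\theta_1,\phi_1)$ centered at $x$ with polar axis along $y_1$ (your $s=r_1$, $\alpha=\theta_1$, $\phi=\phi_1$, and $R=r_1\sin\theta_1$), which is what the paper does. The core step---writing $|f|$ as the integral of its radial derivative along each outgoing ray and then applying Fubini to pick up the factor $r_1$---is identical in both.
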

\begin{proof}
	If $f \in \dot W^{1, 1}$ then $|f| \in \dot W^{1, 1}$ as well and its norm is no larger, because $|\nabla |f|| = |\nabla f|$. We use spherical coordinates centered at $x$: $r_1$, $\theta_1, \phi_1$, such that $z$ corresponds to $\theta_1=0$. The Jacobian factor is $r_1^2 \sin \theta_1 = r_1 R$. Then
	\be\lb{sph}
	\int_{\R^3} \frac {|f(y)|} {r_1 R} = \int_0^{2\pi} \int_0^\pi \int_0^\infty |f(y)| \dd r_1 \dd \theta_1 \dd \phi_1.
	\ee
	Write $|f(r_1, \theta_1, \phi_1)| = - \int_R^\infty \partial_{r_1} |f(\tilde r_1, \theta_1, \phi_1)| \dd \tilde r_1$ and then integrate:
	$$
	(\ref{sph}) \leq \int_0^{2\pi} \int_0^\pi \int_0^\infty r_1 \partial_{r_1} |f(y)| \dd r_1 \dd \theta_1 \dd \phi_1 \leq \int_{\R^3} \frac {|\nabla f|} R.
	$$
\end{proof}

Again, a logarithmically modified version of this is also true.
\begin{lemma}\lb{lema3log}
$$
\int_{D \times [a, b]} \frac {|f(y)||\log R|}{r_1 R} \les \int_{D \times [a, b]} \frac {|\nabla f(y)||\log R|}{R} + \langle \log R_0 \rangle \int_{D \times [a, b]} \frac {|\nabla f(y)|}{R} ,
$$
where $D \times [a, b]$ is a cylinder of radius $R_0$, such that the origin is on the axis of the cylinder.
\end{lemma}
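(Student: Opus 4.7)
The plan is to adapt the spherical-coordinates argument of Lemma~\ref{lema3}, combined with the logarithmic bookkeeping from Lemma~\ref{lema2log}. First I introduce spherical coordinates $(r_1, \theta_1, \phi_1)$ centered at $x$, oriented so that $\theta_1 = 0$ points toward $z$. Along each ray the distance to the cylinder axis is $R(r_1) = r_1 \sin \theta_1$, and the Jacobian $r_1^2 \sin \theta_1 = r_1 R$ cancels the weight $1/(r_1 R)$, giving
\[
\int_{D \times [a,b]} \frac{|f(y)| |\log R|}{r_1 R} \dd y = \int \int_{\alpha}^{\beta} |f| \, |\log R| \, \dd r_1 \, \dd \theta_1 \dd \phi_1,
\]
where $[\alpha, \beta] = [\alpha(\theta_1, \phi_1), \beta(\theta_1, \phi_1)]$ is the portion of the ray from $x$ inside the cylinder.

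For each fixed ray I write, as in Lemma~\ref{lema3}, $|f(y(r_1))| \leq \int_{r_1}^{\beta} |\partial_s f(y(s))| \dd s$, swap the order of integration by Fubini, and reduce to
\[
\int_\alpha^\beta |f| |\log R| \dd r_1 \leq \int_\alpha^\beta |\partial_s f(y(s))| \, G(s) \, \dd s, \qquad G(s) := \int_\alpha^s |\log(r_1 \sin \theta_1)| \dd r_1.
\]
The central computation is the bound $G(s) \leq s(|\log R(s)| + 1)$. Substituting $u = r_1 \sin \theta_1$ yields $G(s) \leq (\sin \theta_1)^{-1} \int_0^{s \sin \theta_1} |\log u| \dd u$, and a case split on whether $s \sin \theta_1$ is at most or greater than $1$ does the job: in the first case the elementary identity $\int_0^R |\log u| \dd u = R(1+|\log R|)$ applies directly, and in the second case $\int_0^R |\log u| \dd u \leq R |\log R| + 2$ together with the key observation that the ostensibly singular prefactor $(\sin \theta_1)^{-1}$ is self-compensated, because $s \sin \theta_1 > 1$ forces $(\sin \theta_1)^{-1} < s$.

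Converting back via $\dd r_1 \dd \theta_1 \dd \phi_1 = \dd y/(sR)$ then produces
\[
\int_{D \times [a,b]} \frac{|f| |\log R|}{r_1 R} \dd y \leq \int_{D \times [a,b]} \frac{|\nabla f| (|\log R| + 1)}{R} \dd y,
\]
which implies the stated inequality since $\langle \log R_0 \rangle \geq 1$.

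The main obstacle is a technical one: the one-sided identity $|f(y(r_1))| = -\int_{r_1}^{\beta} \partial_s |f| \dd s$ assumes $f(y(\beta)) = 0$, valid for test functions compactly supported in $D \times [a,b]$ but not in general. For general $f$ one either approximates by such test functions, or uses the symmetric formula $|f(y(r_1))| \leq \tfrac{1}{2}(|f(y(\alpha))| + |f(y(\beta))|) + \tfrac{1}{2} \int_\alpha^\beta |\partial_s f| \dd s$ and controls the two boundary values by a Poincar\'{e}-type average along the ray; the resulting extra terms are of the form $\int_{D \times [a,b]} |\nabla f|/R$ and are precisely what the factor $\langle \log R_0 \rangle$ on the right-hand side is designed to absorb.
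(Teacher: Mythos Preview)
Your argument is correct and follows essentially the same route as the paper: pass to spherical coordinates centered at $x$ so that the Jacobian cancels $1/(r_1R)$, then integrate radially against the $|\log R|$ weight. The paper's execution differs only cosmetically---it splits $\log R=\log r_1+\log\sin\theta_1$, integrates by parts, and uses a cutoff $\chi(r_1/r_0)$ whose derivative (handled via Lemma~\ref{lema3}) is what produces the $\langle\log R_0\rangle$ term---whereas your direct bound $G(s)\lesssim s(|\log R(s)|+1)$ is a bit cleaner and shows that for $f$ vanishing on the boundary the $\langle\log R_0\rangle$ factor is not needed at all.
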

\begin{proof}
	With no loss of generality, we can assume that $R_0 \leq 1$, as in the region outside this radius the estimate reduces to the previous Lemma \ref{lema3}.
	
	We use a cutoff function $\chi$. In spherical coordinates, $R=r_1 \sin \theta_1$, $R_0 = r_0 \sin \theta_1$, where $r_0=r_0(\theta_1)$ is the distance at which the half-line intersects the cylinder, and the Jacobian is $r_1 R$. The integral becomes
$$
\int_{\R^3} \frac {|f(y)||\log R|} {r_1 R} = -\int_0^{2\pi} \int_0^\pi \int_0^{r_0} |f(y)| \log R \dd r_1 \dd \theta_1 \dd \phi_1.
$$
Since $R=r_1 \sin \theta_1$, $\log R = \log r_1 + \log \sin \theta_1$. We integrate by parts:
$$
\int_0^{r_0} |f(y)| \chi(|y|/r_0)(\log r_1 + \log \sin \theta_1) \dd r_1 \les \int_0^{r_0} r_1 (\log r_1 + \log \sin \theta_1 - 1) (\partial_{r_1} |f(y)| + |f(y)| \frac {|\chi'|}{r_0}) \dd r_1.
$$
The second term in the second parenthesis is bounded by $\log R_0$ times $\ds \int_{\R^3} \frac {|f(y)|}{r_1 R}$, which is handled by Lemma \ref{lema3}.

\end{proof}

\begin{lemma}
	$\|A\|_\KK \leq \|\nabla A\|_\K \leq \|\nabla^2 A\|_{L^1}$.
\end{lemma}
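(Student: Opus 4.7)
\medskip

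\noindent\textbf{Proof proposal.} Both inequalities have the same underlying mechanism: the fundamental theorem of calculus applied along radial rays emanating from an arbitrary base point $y_0 \in \R^3$, combined with the fact that spherical coordinates absorb one power of $|x-y_0|$ through the Jacobian.

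The plan for the first inequality $\|A\|_{\KK} \leq \|\nabla A\|_\K$ is as follows. Fix $y_0 \in \R^3$ and parametrize $x = y_0 + r\omega$ with $r \geq 0$, $\omega \in S^2$. Assuming adequate decay at infinity (which can be obtained by approximation from test functions and then extended by density), write
$$
A(y_0+r\omega) = -\int_r^\infty (\omega \cdot \nabla A)(y_0+s\omega)\dd s,
$$
so that $|A(y_0+r\omega)| \leq \int_r^\infty |\nabla A(y_0+s\omega)|\dd s$. Passing to spherical coordinates centered at $y_0$, the Jacobian $r^2\dd r\dd\omega$ cancels the $r^{-2}$ weight, yielding
$$
\int_{\R^3} \frac{|A(x)|}{|x-y_0|^2}\dd x = \int_{S^2}\!\int_0^\infty |A(y_0+r\omega)|\dd r\dd\omega \leq \int_{S^2}\!\int_0^\infty s\,|\nabla A(y_0+s\omega)|\dd s\dd\omega
$$
after switching the order of integration in $r$ and $s$. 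Returning to Cartesian coordinates via $s^2 \dd s\dd\omega = \dd x$ converts the extra $s$ into $|x-y_0|^{-1}$, and one obtains
$$
\int_{\R^3}\frac{|A(x)|}{|x-y_0|^2}\dd x \leq \int_{\R^3}\frac{|\nabla A(x)|}{|x-y_0|}\dd x \leq \|\nabla A\|_\K.
$$
Taking the supremum over $y_0$ gives the claim.

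For the second inequality $\|\nabla A\|_\K \leq \|\nabla^2 A\|_{L^1}$ I would use exactly the same device applied one order higher: for fixed $y_0$, write
$$
|\nabla A(y_0+r\omega)| \leq \int_r^\infty |\nabla^2 A(y_0+s\omega)|\dd s,
$$
pass to spherical coordinates centered at $y_0$ (so that $|x-y_0|^{-1}$ combined with the Jacobian becomes $r\dd r\dd\omega$), and swap the order of integration to get $\int_0^s r\dd r = s^2/2$. Reassembling into Cartesian coordinates, the $s^2$ is absorbed by the Jacobian $\dd x = s^2\dd s\dd\omega$, leaving the uniform-in-$y_0$ bound $\tfrac12\|\nabla^2 A\|_{L^1}$.

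The only genuine technical nuisance is justifying the fundamental theorem of calculus — specifically, the vanishing of $A$ (respectively $\nabla A$) along almost every ray at infinity. For $A \in X_0$ and test-function approximations, this follows from the fact that $\nabla A \in L^1_{\mathrm{loc}}$ together with finiteness of the integrals on the right-hand sides (by Fubini, the radial integral of $|\nabla A|$ is finite on a.e. ray, forcing $A$ to have a limit at infinity, which must then be zero since $A \in L^{3/2,1}$). The two inequalities are then obtained on a dense subclass and extended by standard monotone/density arguments, since all expressions involved are absolutely convergent integrals of nonnegative functions.
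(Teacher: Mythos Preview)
Your proposal is correct and is exactly the argument the paper has in mind: the paper's proof reads in full ``Integrate by parts in spherical coordinates, also see \cite{BecGol2},'' and your write-up is precisely the spelled-out version of that, including the density/decay justification.
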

\begin{proof}
	Integrate by parts in spherical coordinates, also see \cite{BecGol2}.
\end{proof}

\section{Proof of main results}
\subsection{Linear and bilinear estimates}

Recall
$$
T_1(\lambda)(x, z) = [R_0(\lambda^2) \nabla A R_0(\lambda^2) \nabla A^\#](x, z).
$$

\begin{proposition}\lb{main_bilinear} Suppose that $A \in \K_{2, \log^2}$ is such that $\nabla A \in \K_{\log} \cap \K_{2, \log^2}$, $\nabla^2 A \in \K_{2, \log^2} \cap L^1$, $\nabla^3 A, \nabla^4 A \in L \log L$, and let $A^\# \in \K_{\log} \cap \K_{2, \log^2}$. Then the operator $T_1$ given by (\ref{tt}) belongs to $\widehat \U(L^\infty) \cap \widehat \U(\K^*_{\log})$ and
$$
\|T_1\| \les Bil(A, A^\#)
$$
where
\be\lb{bil}\begin{aligned}
Bil(A, A^\#) &= \|A\|_{\K_{2, \log^2}} \|A^\#\|_{\K_{2, \log^2}} + \|\nabla A\|_{\K_{2, \log^2}} \|A^\#\|_{\K_{\log}} + \|\nabla A\|_{\K_{\log}} \|A^\#\|_{\K_{2, \log^2}} + \|\nabla^2 A\|_{L^1} \|A^\#\|_{\K_{2, \log^2}} \\
&+\|\nabla^2 A\|_{\K_{2, \log^2}} \|A^\#\|_{\K_{\log}} + \|\nabla^3 A\|_{L \log L} \|A^\#\|_{\K_{\log}} + \|\nabla^4 A\|_{L \log L} \|A^\#\|_{\K_{2, \log^2}}.
\end{aligned}\ee
\end{proposition}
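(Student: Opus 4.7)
The plan is to compute $\widehat T_1(\rho,x,z)$ explicitly as a distribution in $\rho$ and then integrate $|\widehat T_1(\rho,x,z)|$ over $\rho$, showing that the resulting kernel defines a bounded operator on both $L^\infty$ and $\K^*_{\log}$. From (\ref{derivative}) the kernel of $R_0(\lambda^2)\nabla$ splits into a leading part $\sim i\lambda e^{i\lambda r}/r$ and a lower-order part $\sim -e^{i\lambda r}/r^2$, each multiplied by $\vec r/r$. Fourier-inverting in $\lambda$ produces a distribution supported on the sphere $\{r=\rho\}$, with a $\delta'(r-\rho)$ piece from the $i\lambda/r$ factor and an ordinary $\delta(r-\rho)$ piece from the $1/r^2$ factor. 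Convolving two such distributions with an intermediate multiplication by $A$ concentrates $\widehat T_1(\rho,x,z)$ on the ellipsoid $\Sigma_\rho=\{y\mid r_1+r_2=\rho\}$ of Section \ref{notations}.

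Working in the elliptical coordinates of that section, with Jacobian $J=4r_1r_2\sin\theta$, the bound $\int|\widehat T_1(\rho,x,z)|\,d\rho\les\ldots$ reduces to showing that the corresponding volume integral $\int_{\R^3}K(x,y,z)\,dy$ is the kernel of a bounded operator in the two target classes, uniformly in the auxiliary variables. The delicate point is that naive composition of the two $\nabla$-resolvent kernels produces weights as singular as $1/(r_1^2 r_2^2)$, which are not integrable on the degenerate part of the ellipsoid.

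To tame them I would integrate by parts, in $\rho$ using the differentiation formula (\ref{dif}) and in the radial cylindrical variable $R$ along the lines of Lemmas \ref{lema2}--\ref{lema3log}. Each integration by parts against a $\delta'(r-\rho)$ trades one power of singularity for either a derivative on $A$ or $A^\#$, or for the Jacobian correction $2\rho/(\rho^2-r^2\cos^2\theta)=1/(2r_1)+1/(2r_2)$ arising from (\ref{dif}). Iterating this produces the terms $\nabla A,\nabla^2 A,\nabla^3 A,\nabla^4 A$; the seven summands in (\ref{bil}) correspond to the seven end-configurations in which the four available derivatives (two from each $R_0\nabla$) can be distributed among $A$, $A^\#$, and the geometric weights. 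Once the derivatives have been absorbed, each residual weight is handled by one of the auxiliary lemmas: $1/(r_1 r_2)$ in the regime $\rho>2r$ by direct computation, $1/(r\sqrt{\rho^2-r^2}\sin\theta)$ in $\rho\leq 2r$ by Lemma \ref{lema1}, and $1/R$ or $1/(r_1 R)$ by Lemmas \ref{lema2} and \ref{lema3}. The $\K^*_{\log}$ bound is obtained by inserting an extra factor $\langle\log r_2\rangle$ or $\langle\log R\rangle$ and invoking the logarithmic variants, Lemmas \ref{lema2log} and \ref{lema3log}; this is precisely where the $L\log L$ norms of $\nabla^3 A$ and $\nabla^4 A$ enter, via the exponential-class estimate of $\log R$ on small disks used in Lemma \ref{lema2log}.

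The main obstacle will be the boundary and degenerate contributions. Integration by parts in $\rho$ generates a boundary term at $\rho=r$, where $\Sigma_\rho$ collapses to the segment $xz$ and $\sin\theta$ vanishes, and one must verify that such terms either cancel or can be absorbed into the same mechanism. A closely related subtlety is that the sharp factor $\vec r_1/r_1\cdot\vec r_2/r_2$ from (\ref{derivative}) is nearly $-1$ precisely in the degenerate regime, which is what makes the leading pairing $\delta'\cdot\delta'$ actually tractable after one integration by parts rather than two. I expect the bookkeeping — matching each residual singularity with exactly one auxiliary lemma and one of the seven summands of (\ref{bil}) — to be the most labor-intensive part, while the geometric estimates themselves are routine once the integration-by-parts scheme is in place.
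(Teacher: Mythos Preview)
Your outline matches the paper's broad strategy --- Fourier-invert in $\lambda$, write $\widehat T_1(\rho)$ as an integral over the ellipsoid $\Sigma_\rho$, integrate by parts in $\rho$ via (\ref{dif}), and split into $\rho>2r$ and $\rho\le 2r$ --- but it mislabels the hard part as ``bookkeeping''. Two specific, non-routine mechanisms drive the actual proof, and neither is integration by parts in $\rho$ or $R$ in any standard sense.

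First, after the $\rho$-differentiations and algebraic cancellations you reach the residue (\ref{rest}); the terms $b_1/(2r_1)$ and $b_2/(2r_2)$ carry weights $r_1^{-4}$ (resp.\ $r_2^{-4}$) that are genuinely non-integrable in $y$. The paper does \emph{not} integrate by parts here: inside a small ball around the singular point it freezes $A_k(y)$ to $A_k(x)$, computes the resulting principal-value integral on $\Sigma_\rho$ explicitly (for $k=2,3$ it vanishes by odd symmetry in $\phi$; for $k=1$ an antiderivative in $\theta$ yields $O(r^{-2})$), and separately bounds the difference $A_k(y)-A_k(x)$ via a line integral of $\nabla A$, which is what produces the $\K_{2,\log}$ factors. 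Second, the worst terms from the Leibniz expansion (\ref{leibniz}) carry $(\rho^2-r^2)^{-3/2}$, e.g.\ (\ref{expr}) and (\ref{second}). These are handled by replacing $\partial_R A_1(y)$ (or $\partial_R^2 A_1$) by its radial average $R^{-1}\int_{y_0}^y \partial_R A_1$; the averaged piece contains a factor whose $\phi$-integral vanishes, and what remains is arranged to cancel \emph{exactly} against another term --- these are the identities (\ref{can}), (\ref{suma}), and (\ref{tert})$+$(\ref{second}). The residual ``average minus value'' gains a power of $R$, and this is where $\nabla^3 A,\nabla^4 A\in L\log L$ enter via Lemma~\ref{lema2log}. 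Your remark about $\vec r_1/r_1\cdot\vec r_2/r_2\approx -1$ saving a derivative does not correspond to anything in the proof; the $I_3$ term really requires two $\rho$-differentiations, and salvation comes from the exact cancellations just described. Finally, Lemma~\ref{lema1} is not invoked in this argument; the workhorses are Lemmas~\ref{lema2}, \ref{lema2log}, and \ref{lema3}.
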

The conditions imply that $A$ and its derivative are continuous and bounded.

In exactly the same way one can bound
$$
T_1^\#(\lambda) = [A^\# \nabla R_0(\lambda^2) A \nabla R_0(\lambda^2)] \in \widehat \U(L^1).
$$

\begin{proof}
The integral kernel of $T_1(\lambda)$ is
\be\lb{t}\begin{aligned}
T_1(\lambda)(x, z) &= \int_{\R^3} (\nabla_x R_0(\lambda^2)(x, y) \cdot A(y)) (\nabla_y R_0(\lambda^2)(y, z) \cdot A^\#(z)) \dd y\\
&= \frac 1 {16 \pi^2} \int_{\R^3} \bigg(\frac {i\lambda e^{i\lambda|x-y|}}{|x-y|} - \frac {e^{i\lambda|x-y|}}{|x-y|^2} \bigg) \bigg(\frac {x-y}{|x-y|} \cdot A(y)\bigg) \bigg(\frac {i\lambda e^{i\lambda|y-z|}}{|y-z|} - \frac {e^{i\lambda|y-z|}}{|y-z|^2} \bigg) \bigg(\frac {y-z}{|y-z|} \cdot A^\#(z)\bigg) \dd y.
\end{aligned}\ee
%Integrating by parts (and noting that $\nabla_x = -\nabla_y$), an alternative formula is
%\be\lb{tprim}\begin{aligned}
%T(\lambda)(x, z) &= \int_{\R^3} R_0(\lambda^2)(x, y) (\div A(y) \nabla_y R_0(\lambda^2)(y, z) \cdot  + \sum_{k, \ell=1}^3 A_k(y) \partial_k \partial_\ell R_0(\lambda^2)(y, z) A_\ell(z)) \dd y\\
%&= \frac 1 {16 \pi^2} \int_{\R^3} \frac {e^{i\lambda|x-y|}}{|x-y|} \bigg(\div A(y) \bigg(\frac {i\lambda e^{i\lambda|y-z|}}{|y-z|} - \frac {e^{i\lambda|y-z|}}{|y-z|^2} \bigg) \bigg(\frac {y-z}{|y-z|} \cdot \bigg)\\
%&+ \sum_{k, \ell=1}^3 A_k(y) \bigg(\bigg(\frac {(i\lambda)^2 e^{i\lambda|y-z|}}{|y-z|} - 3 \frac {(i\lambda)e^{i\lambda|y-z|}}{|y-z|^2} + 3 \frac {e^{i\lambda|y-z|}}{|y-z|^3}\bigg) \frac {y_k-z_k}{|y-z|} \frac {y_\ell - z_\ell}{|y-z|} - \delta_k^\ell \frac {e^{i\lambda|y-z|}}{|y-z|^3}\bigg) A_\ell(z) \dd y.
%\end{aligned}\ee
%Note that $\div A$ can be made $0$ by a gauge transformation.

After distributing the parentheses in (\ref{t}), there are four terms to consider. The easiest term to handle in (\ref{t}) is
$$
I_0 = \int_{\R^3} \frac {e^{i\lambda\rho}(\vec r_1 \cdot A(y))(\vec r_2 \cdot A^\#(z))}{r_1^3r_2^3} \dd y.
$$

Let
$$
B_0(\rho, x, z) = \int_{\Sigma_\rho} \frac {(\vec r_1 \cdot A(y))(\vec r_2 \cdot A^\#(z))} {r_1^3r_2^3} J(y) \dd \mu.
$$
Then
$$
\int_0^\infty |B_0(\rho, x, z)| \dd \rho \leq \int_{\R^3} \frac {|A(y)| |A^\#(z)|} {|x-y|^2|y-z|^2} \dd y
$$
and
\be\lb{unu}\begin{aligned}
\sup_x \int_{\R^3} \bigg(\int_0^\infty |B_0(\rho, x, z)| \dd \rho\bigg) \dd z &\leq \sup_x \int_{\R^3} \int_{\R^3} \frac {|A(y)| |A^\#(z)|} {|x-y|^2|y-z|^2} \dd y \dd z \\
&\leq \sup_x \int_{\R^3} \frac {|A(y)| \dd y} {|x-y|^2} \sup_y \int_{\R^3} \frac {|A^\#(z)| \dd z} {|y-z|^2} \leq \|A\|_\KK^2.
\end{aligned}\ee

Consequently
$$
\|B_0\|_{\U(L^\infty_z, L^\infty_x)} \leq \|B_0\|_{L^\infty_x L^1_z L^1_\rho} \leq \|A\|_\KK \|A^\#\|_\KK.
$$
At the same time, $I_0$ can be rewritten as
$$
I_0 = \int_0^\infty e^{i\lambda \rho} B_0(\rho, x, z) \dd \rho,
$$
proving that $\widehat I_0 = B_0 \in \U(L^\infty)$.

In parallel we also prove $\U(\K^*)$ bounds. However, to deal with some logarithmic factors that appear in later computations, we shall rather use $\K^*_{\log}$ instead of $\K^*$ all throughout. For most of the subsequent bounds this is not necessary, but this logarithmic modification is needed to deal with three of the most singular terms below.

The same estimates for the integral kernel can be used to prove both kinds of bounds.

For $I_0$, we start from (\ref{unu}) and first prove that the integral kernel
\be\lb{k2bd}
K(x, y)=\frac {f(y)}{|x-y|^2} \in \B(L^\infty),\ f \in \KK,
\ee
is in $\B(\K^*)$. For any $w \in \R^3$ we test the integral kernel against $\ds\frac 1 {|y-w|}$, which are representative elements of $\K^*_y$:
$$
\int_{\R^3} \frac {|f(y)|}{|x-y|^2} \frac {|x-w|}{|y-w|} \dd y \leq \int_{\R^3} \frac {|f(y)|}{|x-y|^2} + \frac {|f(y)|}{|x-y||y-w|} \dd y \les \|f\|_{\KK}.
$$
Here we used the fact that
\be\lb{unuprim}
\sup_{x, w} \int_{\R^3} \frac {|f(y)|}{|x-y||y-w|} \leq 2\|f\|_\KK.
\ee
To prove this, we divide $\R^3$ into two regions according to whether $|x-y|<|y-w|$ or not.

Thus the output is $\ds \frac {\tilde f(x)}{|x-w|}$, where $\tilde f \in L^\infty$ is the integral above and $\|\tilde f\|_{L^\infty} \les \|f\|_\KK$, so the operator norm is bounded by $\|f\|_\KK$.

Now (\ref{unu}) is the composition of two such operators, so it is bounded on $\K^*$ of norm $\les \|A\|_\KK \|A^\#\|_\KK$.

Next, consider the integral kernel
\be\lb{k2logbd}
\tilde K(x, y) = \frac {\langle\log|x-y|\rangle f(y)}{|x-y|^2}.
\ee
We test it against $\ds\frac {\langle \log|y-w| \rangle}{|y-w|}$ for $w \in \R^3$:
$$
\int_{\R^3} \frac {\langle \log|x-y| \rangle |f(y)|}{|x-y|^2} \frac {|x-w| / \langle \log|x-w| \rangle}{|y-w| / \langle \log|y-w| \rangle} \dd y \les \int_{\R^3} \frac {\langle \log|x-y| \rangle |f(y)|}{|x-y|^2} + \frac {|f(y)|\langle \log |y-w| \rangle}{|x-y||y-w|} \dd y \les \|f\|_{\K_{2, \log^2}}.
$$
Here we use the fact that when $a, b, c \geq 0$ and $c \leq a+b$
$$
c/\langle \log c \rangle \les a/\langle \log a \rangle + b/\langle \log b \rangle.
$$
Thus the value of $\tilde K$ is a bounded function times $\ds\frac {\langle \log|x-w| \rangle}{|x-w|}$. Hence $\tilde K$ is a bounded operator on $\K^*_{\log}$, with operator norm of size at most $\|f\|_{\K_{2, \log^2}}$. Note that the same applies to the original integral kernel $K$.

Similarly,
$$
L(x, y) = \frac {f(y)}{|x-y|} \in \B(L^\infty) \cap \B(\K^*)
$$
when $f \in \K$, of norm $\|L\| \les \|f\|_\K$, and
\be\lb{klogbd}
\tilde L(x, y) = \frac {\langle \log|x-y| \rangle f(y)}{|x-y|} \in \B(L^\infty) \cap \B(\K^*_{\log})
\ee
for $f \in \K_{\log}$. Note that if $f \in \K_{\log}$, then $K \in \B(\K^*_{\log})$ as well.

Using these building blocks (\ref{k2bd}-\ref{klogbd}) and a few others, we shall prove that all ensuing terms are bounded in these spaces. For compatibility with future estimates, we also use $\widehat U(\K^*_{\log})$ to estimate $I_0$, resulting in a bound of $\|A\|_{\K_{2, \log^2}} \|A^\#\|_{\K_{2, \log^2}}$ on its norm.

The other three terms in (\ref{t}) are
$$
I_1 = -\int_{\R^3} \frac {i\lambda e^{i\lambda \rho}(\vec r_1 \cdot A(y))(\vec r_2 \cdot A^\#(z))}{r_1^3r_2^2} \dd y,\ I_2 = -\int_{\R^3} \frac {i\lambda e^{i\lambda \rho}(\vec r_1 \cdot A(y))(\vec r_2 \cdot A^\#(z))}{r_1^2r_2^3} \dd y,
$$
and
$$
I_3 = \int_{\R^3} \frac {(i\lambda)^2 e^{i\lambda \rho}(\vec r_1 \cdot A(y))(\vec r_2 \cdot A^\#(z))}{r_1^2r_2^2} \dd y.
$$
Let
$$
B_1(\rho, x, z) = \int_{\Sigma_\rho} b_1(y) J(y) \dd \mu = \int_{\Sigma_\rho} \frac {(\vec r_1 \cdot A(y))(\vec r_2 \cdot A^\#(z))}{r_1^3r_2^2} J(y) \dd \mu,
$$
$$
B_2(\rho, x, z) = \int_{\Sigma_\rho} b_2(y) J(y) \dd \mu = \int_{\Sigma_\rho} \frac {(\vec r_1 \cdot A(y))(\vec r_2 \cdot A^\#(z))}{r_1^2r_2^3} J(y) \dd\mu,
$$
and
$$
B_3(\rho, x, z) = \int_{\Sigma_\rho} b_3(y) J(y) \dd \mu = \int_{\Sigma_\rho} \frac {(\vec r_1 \cdot A(y))(\vec r_2 \cdot A^\#(z))}{r_1^2r_2^2} J(y) \dd\mu.
$$
Then for $k=1,2$
$$
I_k = -\int_0^\infty i\lambda e^{i\lambda \rho} B_k(\rho) \dd \rho = \int_0^\infty e^{i\lambda\rho} \partial_\rho B_k(\rho) \dd \rho
$$
and likewise
$$
I_3 = \int_0^\infty (i\lambda)^2 e^{i\lambda \rho} B_k(\rho) \dd \rho = \int_0^\infty e^{i\lambda\rho} \partial^2_\rho B_3(\rho) \dd \rho,
$$
provided that $B_1$, $B_2 \in \dot W^{1, 1}_\rho$ and $B_3 \in \dot W^{2, 1}_\rho$. In other words,
$$\begin{aligned}
\widehat {I_1} = \partial_\rho B_1,\ \widehat {I_2} = \partial_\rho B_2,\ \widehat {I_3} = \partial_\rho^2 B_3.
\end{aligned}$$
%Likewise, in (\ref{tprim}) there are six terms, of which the easiest to handle is
%$$
%J_0 = \int_{\R^3} \frac {e^{i\lambda \rho} \div A(y) (\vec r_2 \cdot )}{r_1 r_2^3}.
%$$
%Proceeding in the same manner as in (\ref{unu}) we obtain a bound of $\|\div A\|_\K \|A\|_\KK$. The other five terms are
%$$
%$$

Carrying out the differentiation according to (\ref{dif}),
$$\begin{aligned}
\widehat I_1(\rho) &= \int_{\Sigma_\rho} \bigg(\partial_\rho b_1 + \bigg(\frac 1 {2r_1} + \frac 1 {2r_2}\bigg) b_1\bigg) J(y) \dd \mu,\\
\widehat I_2(\rho) &= \int_{\Sigma_\rho} \bigg(\partial_\rho b_2 + \bigg(\frac 1 {2r_1} + \frac 1 {2r_2}\bigg) b_2\bigg) J(y) \dd \mu,\\
\widehat I_3(\rho) &= \partial_\rho \int_{\Sigma_\rho} \bigg(\partial_\rho b_3 + \bigg(\frac 1 {2r_1} + \frac 1 {2r_2}\bigg) b_3\bigg) J(y) \dd \mu \\
&= \int_{\Sigma_\rho} \bigg(\partial^2_\rho b_3 + \bigg(\frac 1 {r_1} + \frac 1 {r_2}\bigg) \partial_\rho b_3 + \frac 1 {2r_1r_2} b_3\bigg) J(y) \dd \mu.
\end{aligned}$$

Out of this list, $\frac {b_1} {2r_2}$, $\frac {b_2}{2r_1}$, and $\frac {b_3}{2r_1r_2}$ contain factors of $r_1^{-2} r_2^{-2}$, hence are integrable in the same way as (\ref{unu}). We are left with
$$
\int_{\Sigma_\rho} \bigg(\partial_\rho b_1 + \frac {b_1}{2r_1} + \partial_\rho b_2 + \frac {b_2}{2r_2} + \partial_\rho^2 b_3 + \bigg(\frac 1 {r_1} + \frac 1 {r_2}\bigg) \partial_\rho b_3\bigg) J(y) \dd \mu.
$$

Start from
$$
\partial^2_\rho b_3 = \partial_\rho \big(\partial_\rho (r_1^{-2} r_2^{-2}) [(\vec r_1 \cdot A(y))(\vec r_2 \cdot A^\#(z))] + (r_1^{-2} r_2^{-2}) \partial_\rho[(\vec r_1 \cdot A(y))(\vec r_2 \cdot A^\#(z))]\big).
$$
In the first term
$$
\partial_\rho (r_1^{-2} r_2^{-2}) = -2 r_1^{-3} \partial_\rho r_1 r_2^{-2} + r_1^{-2} (-2 r_2^{-3} \partial_\rho r_2) = -r_1^{-3} r_2^{-2} - r_1^{-2} r_2^{-3}.
$$
Thus, the contribution of this term exactly cancels $\partial_\rho b_1$ and $\partial_\rho b_2$. What remains is
$$\begin{aligned}
&\partial_\rho ((r_1^{-2} r_2^{-2}) \partial_\rho[(\vec r_1 \cdot A(y))(\vec r_2 \cdot A^\#(z))]) = \\
&= \partial_\rho (r_1^{-2} r_2^{-2}) \partial_\rho[(\vec r_1 \cdot A(y))(\vec r_2 \cdot A^\#(z))] + (r_1^{-2} r_2^{-2}) \partial^2_\rho [(\vec r_1 \cdot A(y))(\vec r_2 \cdot A^\#(z))].
\end{aligned}$$

Let
$$
b_0 = \partial_\rho (r_1^{-2} r_2^{-2}) \partial_\rho[(\vec r_1 \cdot A(y))(\vec r_2 \cdot A^\#(z))] = -(r_1^{-3} r_2^{-2} + r_1^{-2} r_2^{-3}) \partial_\rho[(\vec r_1 \cdot A(y))(\vec r_2 \cdot A^\#(z))].
$$
At the same time,
$$\begin{aligned}
\bigg(\frac 1 {r_1} + \frac 1 {r_2}\bigg) \partial_\rho b_3 &= (r_1^{-1} + r_2^{-1}) \big(\partial_\rho (r_1^{-2} r_2^{-2}) [(\vec r_1 \cdot A(y))(\vec r_2 \cdot A^\#(z))] + (r_1^{-2} r_2^{-2}) \partial_\rho[(\vec r_1 \cdot A(y))(\vec r_2 \cdot A^\#(z))]\big) \\
&= -(r_1^{-4} r_2^{-2} + 2 r_1^{-3} r_2^{-3} + r_1^{-2} r_2^{-4}) [(\vec r_1 \cdot A(y))(\vec r_2 \cdot A^\#(z))] \\
&+ (r_1^{-1}+r_2^{-1}) r_1^{-2} r_2^{-2} \partial_\rho[(\vec r_1 \cdot A(y))(\vec r_2 \cdot A^\#(z))].
\end{aligned}$$
The first term partly cancels $\ds\frac {b_1}{2r_1}$ and $\ds\frac {b_2}{2r_2}$, while the contribution of $2r_1^{-3}r_2^{-3}$ can be treated as in (\ref{unu}). The second term exactly cancels $b_0$.

After these cancellations we are left with
\be\lb{rest}
(r_1^{-2} r_2^{-2}) \partial^2_\rho [(\vec r_1 \cdot A(y))(\vec r_2 \cdot A^\#(z))] - \frac {b_1}{2r_1} - \frac {b_2}{2r_2}.
\ee

The main issue for $\ds \frac {b_1}{r_1}$ and $\ds \frac {b_2}{r_2}$ is that they contain factors of size $r_1^{-3}$ or $r_2^{-3}$, which are not integrable in $\R^3$. Thus, if we tried to integrate directly in $y$ as in (\ref{unu}), the right-hand side would be infinite. However, these terms are singular integral operators and exhibit cancellations. These cancellations lead to gains when integrating on the ellipsoids $\Sigma_\rho$, so for an optimal bound we first integrate on $\Sigma_\rho$ and then in $\rho$, unlike in (\ref{unu}).

Consider
\be\lb{b1r1}
\frac {b_1} {r_1} = \frac {(\vec r_1 \cdot A(y))(\vec r_2 \cdot A^\#(z))}{r_1^4 r_2^2} = \sum_{k, \ell=1}^3 \frac {(x_k-y_k) A_k(y) (y_\ell-z_\ell) A^\#_\ell(z)}{r_1^4 r_2^2}.
\ee
When $2 \leq k, \ell \leq 3$, $|x_k-y_k|$ and $|x_\ell-y_\ell|$ are both at most $\min(r_1, r_2)$, so both can be used to cancel two powers of $r_1$, leaving an expression that can be bounded as (\ref{unu}). If $k=1$ and $2 \leq \ell \leq 3$, the same considerations apply, since $|x_1-y_1| \leq r_1$ and $|y_\ell-z_\ell| \leq \min(r_1, r_2)$.

If $1 \leq k \leq 3$ and $\ell = 1$, consider separately the regions
$$
D_0 = \bigg\{y \mid r_1 > r_{10} = \frac {r} {100}\bigg\},\ D=\{y \mid r_1 \leq r_{10}\}.
$$
In the first region $D_0$, $r_2 \leq r_1+r \leq 101 r_1$, so $r_1^{-1} \les r_2^{-1}$ and $r_1^{-4} r_2^{-2} \les r_1^{-3} r_2^{-3}$, meaning we are in the same situation as in (\ref{unu}-\ref{unuprim}).

%We subdivide the region $D$ dyadically into spherical shells
%$$
%D_n=\bigg\{y \mid r_1 \in \bigg(\frac {2^{-n} r} {100}, \frac {2^{-n+1} r} {100}\bigg]\bigg\}
%$$
%for $n \geq 1$ and also let $D_0 \subset E$, $D_0 = \bigg\{y \mid r_1 \in (r/100, r/50)]\bigg\}$.
%We then further subdivide each spherical shell into a finite number (for concreteness six) of sectors, such that no sector contains two diametrically opposite points. In spherical coordinates with respect to $x$ (where $z$ corresponds to $\Theta=\pi$), let the sectors be defined by $D_{n, 1}: \Theta \leq \pi/4$, $D_{n, 2}: \Theta \geq \pi/4$, and $D_{n, 3} \ldots D_{n, 6}: \Theta \in (\pi/4, 3\pi/4), \Phi \in [m\pi/2, (m+1)\pi/2)$. The sectors are not convex, but a line segment connecting two points inside in $D_{n, m}$ is contained in $D_{n, m} \cup D_{n+1, m}$.
Inside $D$, $r$, $r_2$, and $y_1-z_1$ are comparable, in the sense that their differences are of size $r_1$, which is smaller by a factor of $100$: $\ds \rho \leq r + \frac r {100}$ and
$$
\bigg|\frac {y_1-z_1}{r_2^2} - \frac {1}{r_2}\bigg| \les \frac {r_1}{r_2^2}. 
$$
Thus, we can replace
$$
\frac {(x_k-y_k) A_k(y) (y_1-z_1) A_1^\#(z)}{r_1^4 r_2^2}
$$
by
\be\lb{patru}
\frac {(x_k-y_k) A_k(y) A_1^\#(z)}{r_1^4 r_2}
\ee
inside $D$, modulo a term that can be bounded as in (\ref{unu}) and the ensuing discussion.

%Let $M$ be the average of $A_k$ in the spherical shell $D_0$:
%$$
%M = \frac 1 {|D_0|} \int_{D_0} A_k.
%$$
%Since $A_k \in \KK$, $M \les r^{-1} \|A\|_\KK$.
To bound (\ref{patru}), we replace $A_k(y)$ by $A_k(x)$ within $D$. We are required to evaluate two terms: the integral after replacing $A_k$ by a constant function and the integral of the difference.

For $y \in D$, we then express the differences $A_k(y)-A_k(x)$ as integrals of $\nabla A_k(\tilde y)$ over the line segment connecting $y$ and $x$.

Using Fubini's theorem, we then first compute the integral in $y$, in spherical coordinates, leaving the integrand as a function of $\tilde y$.

Integrating the factor $\ds\bigg|\frac {x_k-y_k}{r_1^4 r_2}\bigg| \les r_1^{-3} r_2^{-1}$ for $r_1 \in [\tilde r_1, r_{10}]$, using spherical coordinates such that the Jacobian contains a factor of $r_1^2$, we get $\tilde r_1^{-2} \log (r_{10}/\tilde r_1) r_2^{-1}$ (recall $r_2$ is comparable to $r$, which is constant for the purpose of this integration). This leads to a bound of
$$
\int_{D} \frac {|\nabla A_k(\tilde y)| \log (r_{10}/\tilde r_1) |A_1^\#(z)| \dd \tilde y}{\tilde r_1^2 r_2}.
$$
Here $\log(r_{10}/\tilde r_1) \les \log_+ r_{10} + \log_- \tilde r_1$.

Integrating in $\tilde y$ and $z$ leads to a $\widehat U(L^\infty)$ bound of $\|\nabla A\|_{\K_{2, \log}} \|A^\#\|_\K + \|\nabla A\|_\KK \|A^\#\|_{\K_{\log}}$ for this difference term, as well as to a $\widehat U(\K^*_{\log})$ bound of $\|\nabla A\|_{\K_{2, \log^2}} \|A^\#\|_{\K_{\log}}$. See the discussion around (\ref{k2logbd}--\ref{klogbd}). This is one term that makes the logarithmic modification necessary.

%Even though each factor is not scaling-invariant with respect to (\ref{??}), their product is.

Next, consider (\ref{patru}) after replacing $A_k(y)$ by $A_k(x)$ for $y \in D$. For $2 \leq k \leq 3$, due to symmetries, the integral is $0$.

For $k=1$ we compute the integral, first for fixed $\rho$, considering the integrand as a function of $\theta$. The integral is symmetric in $\phi$, which therefore contributes a factor of $2\pi$. Recall the Jacobian factor $J(y)$ is $4r_1 r_2 \sin \theta$, so the integrand is
$$
\frac {(x_1-y_1)A_1(x)A_1(z)}{r_1^3} \sin \theta.
$$
%For fixed $\rho$, the integral passes through finitely many of the spherical shells $D_n$, as $r_1=\rho-r \cos \theta$ increases monotonically from $\rho-r$ to $\ds \frac r {100}$, passing through $\ds \frac {2^{-n} r} {100}$ for $N(\rho) \geq n \geq 0$, where
%$$
%N(\rho) = \max \{n \mid \rho-r \leq \frac {2^{-n} r} {100}\}.
%$$
The interval endpoints are $\pi$ and $\theta_0$, where $t_0 = \cos \theta_0$, $\ds r_{10} = \rho + r t_0 = \frac {r}{100}$. The antiderivative is
$$
\int_{\Theta}^{\tilde \Theta} \frac {-r-\rho \cos \theta}{(\rho+r \cos \theta)^3} \sin \theta \dd \theta = -\frac {r^2+2r\rho t+\rho^2}{2r^2(\rho+rt)^2} \bigg|_{t=\cos \Theta}^{t=\cos \tilde \Theta}.
$$
At $t=-1$ corresponding to $\theta=\pi$ this antiderivative evaluates to $\ds\frac 1 {2r^2}$. At $t_0=\cos \theta_0$ we get
$$
\frac 1 {2r^2} + \frac {1-t_0^2} {2r_{10}^2}.
$$
The difference of the endpoint values is of size $r^{-2}$, leading to
$$
\bigg| \int_0^{\theta_0} \frac {(x_1-y_1)A_1(x)A_1^\#(z)}{r_1^3} \sin \theta \dd \theta \bigg| \les r^{-2} |A(x)| |A^\#(z)|.
$$
Integrating in $\rho$ over $\ds[r, r+\frac r{100}]$, which are the values for which $\Sigma_\rho$ may intersect $D$, results in an overall estimate of $r^{-1} |A_1(x)| |A^\#(z)| \les r^{-1} \|A\|_{L^\infty} |A^\#(z)|$. Integrating in $z$ as well leads to a $\widehat U(L^\infty)$ bound of $\|A\|_{L^\infty} \|A^\#\|_\K$
%, hence
%$$
%\sup_x \int_{\R^3} \int_0^\infty \bigg|\int_{\Sigma_\rho} \frac {b_1}{r_1} J(y) \dd \mu \bigg| \dd \rho \dd z \les \|\nabla A\|_\KK \|A^\#\|_\K + \|A\|_\KK \|A^\#\|_\KK
%$$
and to a $\|A\|_{L^\infty} \|A^\#\|_{\K_{\log}}$ bound in $\widehat U(\K^*_{\log})$.

The term that corresponds to $\ds \frac {b_2}{r_2}$ can be treated in the same manner, resulting in a bound of
$$
\int_{D} \frac {|\nabla A_1(y)| \log (r_{20}/r_2) |A_\ell^\#(z)| \dd y}{r_1 r_2^2}
$$
for the difference, meaning $\|\nabla A\|_{\K_{\log}} \|A^\#\|_\KK + \|\nabla A\|_\K \|A^\#\|_{\K_{2, \log}}$ in $\widehat U(L^\infty)$ and $\|\nabla A\|_{\K_{\log}} \|A^\#\|_{\K_{2, \log^2}}$ in $\widehat U(\K^*_{\log})$. The main term after substituting $A_k(z)$ for $A_k(y)$ satisfies the same bound as above.
%, resulting in a bound of $\|A\|_\KK \|A^\#\|_\KK$.

The other expression in (\ref{rest}) is, omitting summation and a factor of $r_1^{-2} r_2^{-2}$,
\be\lb{leibniz}\begin{aligned}
\partial^2_\rho [(x_k-y_k)A_k(y)(y_\ell-z_\ell)A^\#_\ell(z)] &= \partial^2_\rho [(x_k-y_k)(y_\ell-z_\ell)] A_k(y) A^\#_\ell(z) \\
&+ 2 \partial_\rho [(x_k-y_k)(y_\ell-z_\ell)] (\partial_\rho A_k(y)) A^\#_\ell(z) \\
&+ [(x_k-y_k)(y_\ell-z_\ell)] (\partial^2_\rho A_k(y)) A^\#_\ell(z).
\end{aligned}\ee

Now we have to handle differentiation with respect to $\rho$. Let
$$
v=(v_1, v_2, v_3)=\partial_\rho y = \bigg(\frac {\cos \theta} 2, \frac {\rho \sin \theta \cos \phi} {2\sqrt {\rho^2-r^2}} , \frac {\rho \sin \theta \sin \phi} {2\sqrt {\rho^2-r^2}} \bigg).
$$
For any function $f$
$$
\ds |\partial_\rho f(y)| \les \bigg(\cos \theta + \frac {\rho \sin \theta} {\sqrt {\rho^2-r^2}}\bigg) |\nabla f(y)| \les \frac {\sqrt {\rho^2 - r^2 \cos^2 \theta}}{\sqrt {\rho^2-r^2}} |\nabla f(y)|.
$$
Note that
$$
\partial_\rho \frac \rho {\sqrt {\rho^2-r^2}} = - \frac {r^2} {(\rho^2-r^2)^{3/2}}.
$$
Also, $\sqrt {\rho^2 - r^2 \cos^2 \theta} = r_1^{1/2} r_2^{1/2}$ and $\rho^2 \sin^2 \theta \leq \rho^2 - r^2 \cos^2 \theta$, so
$$
\rho \sin \theta \leq r_1^{1/2} r_2^{1/2}.
$$
This bound is used when estimating almost every term below. Furthermore, $\sqrt{\rho^2-r^2} \sin \theta \les \min(r_1, r_2)$.

The factors of $\ds\frac 1 {\rho^2-r^2}$ and $\ds \frac 1 {(\rho^2-r^2)^{3/2}}$ that appear through differentiation pose a special difficulty that needs to be addressed. When $\rho>2r$, $\ds\frac \rho {\sqrt{\rho^2-r^2}}$ is of order $1$, but when $\rho \leq 2r$ this factor is unbounded (in a compact region, however). The two situations are distinguished in the analysis of each term below.
%Some other divider values, such as $r+1$ or $r+1/r$, are preferable when $r$ is large.

In the first term in (\ref{leibniz}), if $2 \leq k, \ell \leq 3$ or $k=\ell=1$, then $\partial^2_\rho [(x_k-y_k)(y_\ell-z_\ell)]$ is a constant that depends only on $\theta$ and $\phi$, of absolute value at most $1$. This case can be handled as in (\ref{unu}) and the subsequent discussion, leaving $k=1$ and $2 \leq \ell \leq 3$ or vice-versa.

For $k=1$ and $2 \leq \ell \leq 3$ (and symmetrically when $2 \leq k \leq 3$ and $\ell=1$)
\be\lb{l3}
\partial^2_\rho [(x_k-y_k)(y_\ell-z_\ell)] = 2 \bigg(-\frac 1 2 \cos \theta \frac {\rho \sin \theta g(\phi)}{2\sqrt{\rho^2-r^2}}\bigg) + \frac 1 2 (\rho \cos \theta + r) \frac {r^2 \sin \theta g(\phi)}{2(\rho^2-r^2)^{3/2}},
\ee
where $g(\phi)$ is $\sin \phi$ or $\cos \phi$. The two terms' contributions are handled separately. The first term in (\ref{l3}) produces a contribution of
\be\lb{1st}
\frac {\rho \sin \theta \cos \theta g(\phi) A_1(y) A^\#_\ell(z)}{r_1^2 r_2^2 \sqrt{\rho^2-r^2}}.
\ee
For $\rho>2r$
$$
(\ref{1st}) \les \frac {|A_1(y)| |A^\#_\ell(z)|}{r_1^2 r_2^2}
$$
and this can be handled as in (\ref{unu}), for a bound of $\|A\|_\KK \|A^\#\|_\KK$ in $\widehat U(L^\infty)$ and a bound of $\|A\|_{\K_{2, \log^2}} \|A^\#\|_{\K_{2, \log^2}}$ in $\widehat U(\K^*_{\log})$.

For $\rho\leq 2r$
$$
(\ref{1st}) \les \frac {|A_1(y)| |A^\#_\ell(z)|}{r_1^{3/2} r_2^{3/2} \sqrt{\rho^2-r^2}} = \frac {\sin \theta |A_1(y)| |A^\#_\ell(z)|}{r_1^{3/2} r_2^{3/2} R} \les \frac {|A_1(y)| |A^\#_\ell(z)|}{\rho r_1 r_2 R}.
%\les \frac {|A_1(y)| |A^\#_\ell(z)|}{r r_1 r_2 R}.
$$
The $\widehat \U(L^\infty)$ bound can be obtained by the inequalities
$$
\int_{\R^3} \frac {|A_1(y)| \dd y}{r_1 R} \les \|\nabla^2 A\|_{L^1},\ \int_{\R^3} \frac {|A^\#_\ell(z)|}{r_2^2} \les \|A^\#\|_\KK,
$$
resulting in $\|\nabla^2 A\|_{L^1} \|A^\#\|_\KK$. For the $\widehat \U(\K^*_{\log})$ bound, we test the integral kernel, as for (\ref{k2logbd}), against $\ds \frac {\langle \log |z-w| \rangle}{|z-w|}$ for $w \in \R^3$:
\be\lb{long}
\frac {|A_1(y)| |A^\#_\ell(z)|}{\rho r_1 r_2 R} \frac {|x-w|/\langle \log|x-w| \rangle}{|z-w|/\langle \log|z-w| \rangle} \les \frac {|A_1(y)| |A^\#_\ell(z)|}{\rho r_1 r_2 R} + \frac {|A_1(y)| |A^\#_\ell(z)|}{\rho r_1 R} \frac {\langle \log|z-w| \rangle}{|z-w|} + \frac {|A_1(y)| |A^\#_\ell(z)|}{\rho r_2 R} \frac {\langle \log|z-w| \rangle}{|z-w|}
\ee
The first term has been bounded above. We split the second term into $\ds \int \frac {|A_1(y)|}{r_1 R}$ and $\ds\int \frac {A^\#(z) \langle \log |z-w| \rangle}{r_2 |z-w|}$, after replacing $\rho$ by $r_2$. We use the same split for the last term, after replacing $\rho$ by $r_1$. This results in an overall bound of $\|\nabla^2 A\|_{L^1} \|A^\#\|_{\K_{2, \log^2}}$ for the $\widehat \U(\K^*_{\log})$ norm.

%By expressing the area of the triangle $xyz$ in two different ways we get $rR = r_1 \tilde R$, where
%$$
%\tilde R=d(z, xy)
%$$
%(the distance from $z$ to the line $xy$). The denominator becomes $r_1^2 r_2 \tilde R$. By Lemmas \ref{lema2} and \ref{lema3},
%$$
%\int \frac {|A^\#_\ell(z)|}{r_2 \tilde R} \les \|\nabla^2 A^\#\|_{L^1}.
%$$
%This leads to a bound of $\|A\|_\KK \|\nabla^2 A^\#\|_{L^1}$ in $\widehat U(L^\infty)$ and $\|A\|_{\K_{2, \log^2}} (\|\nabla^2 A^\#\|_{L^1} + \|A^\#\|_{\K_{2, \log^2}})$ in $\widehat U(\K^*_{\log})$.

%Also note that
%\be\lb{bound}
%\int_{\Sigma_\rho} \frac 1 {r_1^{3/2} r_2^{3/2}} J(y) \dd \mu = 2 \int_0^\pi \frac {\sin \theta \dd \theta} {\sqrt {\rho^2-r^2 \cos^2 \theta}} \bigg(\int_0^{2\pi} \dd \phi\bigg) \les \frac 1 r.
%\ee
%This leads to a bound of $\ds\frac {\|A\|_{L^\infty} ||}{r\sqrt{\rho^2-r^2}}$. The quantity $\ds\chi_{[r, 2r]}(\rho) \frac 1 {\sqrt{\rho^2-r^2}}$ is integrable and its $L^1_\rho$ norm is independent of $r$. Integrating in $z$ as well we end up with $\|A\|_{L^\infty} \|A^\#\|_{\K}$.

The second term in (\ref{l3}) gives rise to
\be\lb{expr}
\frac {(\rho \cos \theta + r) r^2 \sin \theta g(\phi) A_1(y) A^\#_\ell(z)}{4r_1^2 r_2^2 (\rho^2-r^2)^{3/2}}.
\ee
We retain this expression for now and will handle it later, see (\ref{can}). Again, when $\rho > 2r$, this is bounded by
$$
\frac {|A_1(y)| |A^\#_\ell(z)|}{r_1 r_2^2 \rho},
$$
which leads to a bound of $\|A\|_\KK \|A^\#\|_\KK$ in $\widehat U(L^\infty)$ and $\|A\|_{\K_{2, \log^2}} \|A^\#\|_{\K_{2, \log^2}}$ in $\widehat U(\K^*_{\log})$.

For the second term in (\ref{leibniz}) we again consider several cases. When $2 \leq k, \ell \leq 3$,
$$
\partial_\rho [(x_k-y_k)(y_\ell-z_\ell)] = 2\rho \sin^2 \theta \, g(\phi),
$$
where $g(\phi)$ is $\sin^2\phi$, $\sin \phi \cos \phi$, or $\cos^2\phi$.
% When $k=1$ and $2 \leq \ell \leq 3$ or vice-versa,
%$$
%\partial_\rho [(x_k-y_k)(y_\ell-z_\ell)] = (2\rho \cos \theta \pm r) \sin \theta g(\phi),
%$$
%where $|2\rho \cos \theta \pm r| \les \rho$ and $g(\phi)$ is $\sin \phi$ or $\cos \phi$. Thus these two situations can be treated together.
For $\rho>2r$ the integrand is bounded by
$$
\frac {\rho \sin^2 \theta |\nabla A_k(y)| |A^\#_\ell(z)|}{r_1^2 r_2^2} \les \frac {|\nabla A_k(y)| |A^\#_\ell(z)|}{r_1 r_2 \rho} \les \frac {|\nabla A_k(y)| |A^\#_\ell(z)|}{r_1 r_2^2}.
$$
After integrating in $y$ and $z$, we get a bound of $\|\nabla A\|_\K \|A^\#\|_\KK$ in $\widehat \U(L^\infty)$ and a bound of $\|\nabla A\|_{\K_{\log}} \|A^\#\|_{\K_{2, \log^2}}$ in $\widehat U(\K^*_{\log})$ for this expression.

For $\rho \leq 2r$ the same integrand is of size
$$
\frac {\rho \sin^2 \theta |\partial_\rho A_k(y)| |A^\#_\ell(z)|}{r_1^2 r_2^2} \les \frac {\rho \sin^2 \theta |\nabla A_k(y)| |A^\#_\ell(z)|}{r_1^{3/2} r_2^{3/2} \sqrt {\rho^2-r^2}} = \frac {\rho \sin^3 \theta |\nabla A_k(y)| |A^\#_\ell(z)|}{r_1^{3/2} r_2^{3/2} R} \les \frac {|\nabla A_k(y)| |A^\#_\ell(z)|}{\rho^2 R}.
$$
Using Lemma \ref{lema2}, we end up with bounds of $\|\nabla^2 A\|_{L^1} \|A^\#\|_\KK$ in $\widehat U(L^\infty)$ and $\|\nabla^2 A\|_{L^1} \|A^\#\|_{\K_{2, \log^2}}$ in $\widehat U(\K^*_{\log})$, in the same manner as in (\ref{long}).

For the second term in (\ref{leibniz}), when $k=1$ and $2 \leq \ell \leq 3$ or in the symmetric case,
\be\lb{l2}
\partial_\rho [(x_1-y_1)(y_\ell-z_\ell)] = -\frac {\cos \theta} 2 \frac {\sqrt {\rho^2-r^2} \sin \theta g(\phi)} 2 - \frac {\rho \cos \theta + r} 2 \frac {\rho \sin \theta g(\phi)}{2\sqrt {\rho^2-r^2}}.
\ee
We treat the two terms in (\ref{l2}) separately.

The contribution of the first term in (\ref{l2}) is
$$
\frac {\sqrt {\rho^2-r^2} \sin \theta \cos \theta g(\phi) |\partial_\rho A_1(y)| |A_\ell(z)|}{r_1^2 r_2^2} \les \frac {\sin \theta \cos \theta g(\phi) \sqrt {\rho^2-r^2 \cos^2 \theta} |\nabla A_1(y)| |A^\#_\ell(z)|}{r_1^2 r_2^2}.
$$
When $\rho>2r$ the integrand is bounded by
$$
\frac {\sqrt{\rho^2-r^2} \sin \theta |\nabla A_1(y)| |A^\#_\ell(z)|}{r_1^2 r_2^2} \les \frac {\min(r_1, r_2) |\nabla A_1(y)| |A^\#_\ell(z)|}{r_1^2 r_2^2} \les \frac {|\nabla A_1(y)| |A^\#_\ell(z)|}{r_1 r_2^2}.
$$
Again we get a bound of $\|\nabla A\|_\K \|A^\#\|_\KK$ in $\widehat \U(L^\infty)$ and $\|\nabla A\|_{\K_{\log}} \|A^\#\|_{\K_{2, \log^2}}$ in $\widehat \U(\K^*_{\log})$. For $\rho \leq 2r$ we instead get
$$
\frac {\sin \theta |\nabla A_1(y)| |A_\ell^\#(z)|}{r_1^{3/2} r_2^{3/2}} \les \frac {|\nabla A_1(y)| |A_\ell^\#(z)|}{r_1 r_2 \rho},
$$
leading to the same bounds.

Regarding the second term in (\ref{l2}), for $\rho>2r$ we use the fact that $|\rho \cos \theta \pm r| \les \rho$, so the integrand is of size
$$
\frac {\rho \sin \theta |\nabla A_1(y)| |A_\ell(z)|}{r_1^2 r_2^2} \les \frac {|\nabla A_1(y)| |A^\#_\ell(z)|}{r_1^{3/2} r_2^{3/2}}.
$$
This leads, for example, to a bound of $\|\nabla A\|_{\K} \|A^\#\|_{\KK} + \|\nabla A\|_\KK \|A^\#\|_\K$ in $\widehat \U(L^\infty)$ and $\|\nabla A\|_{\K_{\log}} \|A^\#\|_{\K_{2, \log^2}} + \|\nabla A\|_{\K_{2, \log^2}} \|A^\#\|_{\K_{\log}}$ in $\widehat \U(\K^*_{\log})$.

For $\rho \leq 2r$ we cannot use absolute value bounds. The integrand is
$$
-\frac {(\rho \cos \theta + r) \rho \sin \theta g(\phi) (v \cdot \nabla A_1(y)) A^\#_\ell(z)}{4r_1^2 r_2^2 \sqrt{\rho^2-r^2}}.
$$
The first component of $v$, $v_1$, is less singular and just for this part we again obtain a bound of
$$
\les \frac {\rho \sin^2 \theta |\nabla A_1(y)| |A_\ell^\#(z)|}{r_1 r_2^2 R} \les \frac {|\nabla A_1(y)| |A_\ell^\#(z)|}{\rho r_2 R}.
$$
This leads to a bound of $\|\nabla^2 A\|_{L^1} \|A^\#\|_\KK$ in $\widehat U(L^\infty)$ and one of $\|\nabla^2 A\|_{L^1} \|A^\#\|_{\K_{2, \log^2}}$ (as in (\ref{long})) in $\widehat U(\K^*_{\log})$.
%Again $\rho R = r_1 \tilde R$, leading to a bound of $\|\nabla A\|_\K \|\nabla^2 A^\#\|_{L^1}$ by Lemmas \ref{lema2} and \ref{lema3}.

We are left with
\be\lb{exp}
-\frac {(\rho \cos \theta + r) \rho^2 \sin^2 \theta g(\phi) \partial_R A_1(y) A^\#_\ell(z)}{8 r_1^2 r_2^2 (\rho^2-r^2)}.
\ee
Note that there are two copies of this expression, due to the factor of $2$ in front of it in (\ref{leibniz}). For now we use one copy and we keep one for later, see (\ref{suma}).

In one copy of (\ref{exp}) we replace $\partial_R A_1(y)$ by
$$
\frac 1 R \int_{y_0}^y \partial_R A_1(\eta) \dd \eta = \frac {2(A_1(y) - A_1(y_0))} {\sqrt{\rho^2-r^2} \sin \theta}
$$
and then evaluate the difference separately. After the replacement, integrating in $\phi$ makes the $A_1(y_0)$ term vanish, since
$$
\int_0^{2\pi} g(\phi) \dd \phi = 0,
$$
leaving an integrand of
\be\lb{int}
-\frac {(\rho \cos \theta + r) \rho^2 \sin \theta g(\phi) A_1(y) A^\#_\ell(z)}{4 r_1^2 r_2^2 (\rho^2-r^2)^{3/2}}.
\ee
Finally, (\ref{int}) can be combined with (\ref{expr}), resulting in a cancellation:
\be\lb{can}
(\ref{int})+(\ref{expr}) = -\frac {(\rho \cos \theta + r) \sin \theta g(\phi) A_1(y) A^\#_\ell(z)}{4 r_1^2 r_2^2 (\rho^2-r^2)^{1/2}}.
\ee
The ensuing expression can be estimated as follows:
$$
(\ref{can}) \les \frac {\sin^2 \theta |A_1(y)| |A^\#_\ell(z)|}{r_1 r_2^2 R} \les \frac {|A_1(y)| |A^\#_\ell(z)|}{\rho^2 r_2 R}
%\les \frac {|A_1(y)| |A^\#_\ell(z)|}{r_1 R r_2^2},
$$
which results in a $\U(L^\infty)$ bound of $\|\nabla^2 A\|_{L^1} \|A^\#\|_\KK$, by Lemmas \ref{lema2} and \ref{lema3}. The $\U(\K^*_{\log})$ bound, obtained as in (\ref{long}), is $\|\nabla^2 A\|_{L^1} \|A^\#\|_{\K_{2, \log^2}}$.
%test the integral kernel against $\ds\frac {\langle \log|z-w| \rangle}{|z-w|}$ for $w \in \R^3$:
%$$
%\frac {|A_1(y)| |A^\#_\ell(z)|}{r_1 R r_2^2} \frac {|x-w|/\langle \log|x-w| \rangle}{|z-w|/\langle \log|z-w| \rangle} \les \frac {|A_1(y)| |A^\#_\ell(z)|}{r_1 R r_2^2} + \frac {|A_1(y)| |A^\#_\ell(z)|}{r_1 R r_2}\frac {|x-w|/\langle \log|x-w| \rangle}{|z-w|/\langle \log|z-w| \rangle}
%$$

The difference also presents a gain:
$$
\bigg|\partial_R A_1(y) - \frac 1 R \int_{y_0}^y \partial_R A_1(\eta) \dd \eta\bigg| \leq \frac 1 R \int_{y_0}^y |\partial_R A_1(y) - \partial_R A_1(\eta)| \dd \eta \les R \|\nabla^2 A\|_{L^\infty}.
$$
This factor of $R$ cancels one power of $R$ in the denominator, making the whole expression integrable.
%This produces an integrand bounded by
%$$
%\frac {|\rho \cos \theta + r| \rho^2 \sin^3 \theta \|\nabla^2 A\|_{L^\infty} |A^\#_\ell(z)|}{r_1^2 r_2^2 \sqrt {\rho^2-r^2}}.
%$$
%Integrating in $y$, using (\ref{bound}), results in a bound of $r \|\nabla^2 A\|_{L^\infty} |A_\ell(z)|$.

A better estimate with the same effect is
$$
\bigg|\partial_R A_1(y) - \frac 1 R \int_{y_0}^y \partial_R A_1(\eta) \dd \eta\bigg| = \frac 1 R \bigg|\int_{y_0}^y \partial_R A_1(y) - \partial_R A_1(\eta) \dd \eta\bigg| \leq \frac 1 R \int_{y_0}^y (R-R_\eta) |\nabla^2 A(\eta)| \dd \eta.
$$
Here $R-R_\eta=|y-\eta|$, where $R=R_y=\sqrt{y_1^2+y_2^2}$ and $R_\eta=\sqrt{\eta_1^2+\eta_2^2}$ is the corresponding value for $\eta$.

Using this formula instead in (\ref{exp}) leads to
$$\begin{aligned}
\les \frac {\rho^2\sin^4 \theta (\int_{y_0}^y (R-R_\eta)|\nabla^2A(\eta)|\dd \eta) A_\ell^\#(z)}{r_1 r_2^2 R^3} &\les \frac {r_1 (\int_{y_0}^y (R-R_\eta)|\nabla^2A(\eta)|\dd \eta) A_\ell^\#(z)}{\rho^2 R^3} \\
&\les \frac {(\int_{y_0}^y (R-R_\eta)|\nabla^2A(\eta)|\dd \eta) A_\ell^\#(z)}{\rho R^3}.
\end{aligned}$$
Using Fubini's theorem, we carry out the integration in $y$ first, in cylindrical coordinates, and leave the integral in $\eta$. There is a factor of $R$ from using cylindrical coordinates, so the integral is
$$
\int_\eta^{y_1} \frac {R_y-R_\eta}{R_y^2},
$$
where the upper bound $y_1$ is comparable to $r$ (and denotes the point where the half-line intersects $\Sigma_{\rho=2r}$).

The integral is at most $1+|\log R_\eta/r|$ and now there is a factor of $1/R_\eta$ because the computations were done in cylindrical coordinates. We are left with
$$
\frac {(1+|\log R_\eta|+|\log r|)|\nabla^2 A(\eta)| |A_\ell^\#(z)|}{\rho R_\eta}.
$$
This is bounded by $\|\nabla^3 A\|_{L \log L} \|A^\#\|_\K + \|\nabla^3 A\|_{L^1} \|A^\#\|_{\K_{\log}}$ in $\widehat \U(L^\infty)$, by Lemma \ref{lema2log}, and $\|\nabla^3 A\|_{L \log L} \|A^\#\|_{\K_{\log}}$ in $\widehat \U(\K^*_{\log})$.

This is again a term that requires the use of logarithmically modified norms.

In the second term in (\ref{leibniz}), in the final case when $k=\ell=1$ the integrand is
$$
\frac {2\rho \cos^2 \theta\, \partial_\rho A_1(y)\, A^\#_1(z)}{r_1^2 r_2^2}.
$$
For $\rho>2r$ this is at most of size
$$
\frac {\rho |\nabla A_1(y)| |A^\#_1(z)|}{r_1^2 r_2^2}
$$
and writing $\rho=r_1+r_2$ we obtain a bound of $\|\nabla A\|_\KK \|A^\#\|_\K + \|\nabla A\|_\K \|A^\#\|_\KK$ (with logarithmic modifications for the $\widehat \U(\K^*_{\log})$ norm). For $\rho \leq 2r$ the integrand is bounded by
$$
\frac {\rho |\nabla A_1(y)| |A^\#_1(z)|}{r_1^{3/2} r_2^{3/2} \sqrt {\rho^2-r^2}} \les \frac {|\nabla A_1(y)| |A^\#_1(z)|}{r_1 r_2 R}
%\les \frac {\rho |\nabla A_1(y)| |A^\#_1(z)|}{r_1^2 r_2 \tilde R}.
$$
This leads to a $\widehat \U(L^\infty)$ bound of $\|\nabla^3 A\|_{L^1} \|A^\#\|_{\K}$, by means of Lemmas \ref{lema2} and \ref{lema3}. To obtain a $\widehat \U(\K^*_{log})$ bound, we test the integral kernel against $\ds\frac {\langle \log|z-w| \rangle}{|z-w|}$ for $w \in \R^3$:
$$
\frac {|\nabla A_1(y)| |A^\#_\ell(z)|}{r_1 R r_2} \frac {|x-w|/\langle \log|x-w| \rangle}{|z-w|/\langle \log|z-w| \rangle} \les \frac {|\nabla A_1(y)| |A^\#_\ell(z)|}{r_1 R r_2} + \frac {|\nabla A_1(y)| |A^\#_\ell(z)|}{r_1 R}\frac {\langle \log|z-w| \rangle}{|z-w|} + \frac {|\nabla A_1(y)| |A^\#_\ell(z)|}{R r_2}\frac {\langle \log|z-w| \rangle}{|z-w|}.
$$
This produces a bound of $\|\nabla^3 A\|_{L^1} \|A^\#\|_{\K_{\log}} + \|\nabla^2 A\|_{L^1} \|A^\#\|_{\K_{2, \log^2}}$.

The last term in (\ref{leibniz}) is
\be\lb{last_term}
\frac {(x_k-y_k)(y_\ell-z_\ell) (v \otimes v) \cdot \nabla^2 A_k(y) A^\#_\ell(z)}{r_1^2 r_2^2} + \frac {(x_k-y_k)(y_\ell-z_\ell) \partial_\rho v \cdot \nabla A_k(y) A^\#_\ell(z)}{r_1^2 r_2^2}.
\ee
For $\rho>2r$ the integral of the first expression is bounded by $\|\nabla^2 A\|_\K \|A^\#\|_\K$ and that of the second expression is bounded by $\|\nabla A\|_\KK \|A^\#\|_\K$. This provides the $\widehat U(L^\infty)$ norm and the $\widehat U(\K^*_{log})$ norm is the same, but with the usual logarithmic modifications.

For $\rho<2r$, $v_1$ is less singular than the other two components of $v$. In the first expression the part that contains $v_1 \otimes v_1$ is bounded by $\|\nabla^2 A\|_\K \|A^\#\|_\K$ (with logarithmic modifications for $\widehat U(\K^*_{\log})$), while the parts that contain one copy of $v_1$ are bounded by
$$
\frac {\rho \sin \theta |\nabla^2 A(y)| |A^\#(z)|}{r_1 r_2 \sqrt{\rho^2-r^2}} \les \frac {|\nabla^2 A(y)| |A^\#(z)|}{\rho R}
$$
and eventually by $\|\nabla^3 A\|_{L^1} \|A^\#\|_\K$ in the $\widehat \U(L^\infty)$ norm and $\|\nabla^3 A\|_{L^1} \|A^\#\|_{\K_{\log}}$ in the $\widehat U(\K^*_{\log})$ norm.

In the second expression, $\partial_\rho v_1 = 0$, so we are left with the parts corresponding to $v_2$ and $v_3$.

From the expression involving $\nabla^2 A_k$ in (\ref{last_term}) we are left with
\be\lb{prim}
\frac {(x_k-y_k)(y_\ell-z_\ell)\rho^2\sin^2\theta\, \partial_R^2 A_k(y) A_\ell^\#(z)}{4r_1^2 r_2^2 (\rho^2-r^2)},
\ee
while from the expression involving $\nabla A_k$ in (\ref{last_term}) we are left with
\be\lb{second}
-\frac {(x_k-y_k)(y_\ell-z_\ell) r^2 \sin \theta \partial_R A_k(y) A_\ell^\#(z)}{2r_1^2 r_2^2 (\rho^2-r^2)^{3/2}}.
\ee
%We only keep the part between $x$ and $z$, because the other two parts have size $\rho-r$.
Again we distinguish three cases, according to whether $2 \leq k, \ell \leq 3$ or one or both of $k$ and $\ell$ are $1$.

If $2 \leq k, \ell \leq 3$, we get two factors of $\sqrt{\rho^2-r^2}$ in the numerator, canceling the same in the denominator. For (\ref{prim}), this leads to a bound of
$$
\frac {\rho^2 \sin^4 \theta |\partial_R^2 A_k(y)| |A_\ell^\#(z)|}{r_1^2 r_2^2} \les \frac {|\partial_R^2 A_k(y)| |A_\ell^\#(z)|}{\rho^2}
$$
and eventually $\|\nabla^2 A\|_\K \|A^\#\|_\K$ in $\widehat U(L^\infty)$ and $\|\nabla^2 A\|_{\K_{\log}} \|A^\#\|_{\K_{\log}}$ in $\widehat U(\K^*_{\log})$.

The $2 \leq k, \ell \leq 3$ terms in (\ref{second}) satisfy a bound of
$$
\frac {r^2 \sin^3 \theta |\partial_R A_k(y)| |A^\#_\ell(z)}{r_1^2 r_2^2 \sqrt{\rho^2-r^2}} \les \frac {|\partial_R A_k(y)| |A^\#_\ell(z)|}{\rho^2 R}
$$
and eventually $\|\nabla^2 A\|_{L^1} \|A^\#\|_\KK$ in $\widehat U(L^\infty)$ (using Lemmas \ref{lema2} and \ref{lema3}) and $\|\nabla^2 A\|_{L^1} \|A^\#\|_{\K_{2, \log}}$ in $\widehat U(\K^*_{\log})$.

If $k=1$ and $2 \leq \ell \leq 3$ or vice-versa, for (\ref{prim}) we have a factor of $R$ in the numerator which cancels one in the denominator:
$$
\les \frac {\rho^2 \sin^5 \theta |\partial_R^2 A_k(y)| |A_\ell^\#(z)|}{r_1 r_2^2 R} \les \frac {r_1^{3/2} r_2^{1/2} |\partial_R^2 A_k(y)| |A_\ell^\#(z)|}{\rho^3 R} \les \frac {|\partial_R^2 A_k(y)| |A_\ell^\#(z)|}{\rho R},
$$
%\les r \|\nabla^2 A\|_{L^\infty} |A_\ell(z)|
which by Lemmas \ref{lema2} and \ref{lema3} leads to an estimate of $\|\nabla^3 A\|_{L^1} \|A^\#\|_\K$ in the $\widehat U(L^\infty)$ norm and an estimate of $\|\nabla^3 A\|_{L^1} \|A^\#\|_{\K_{\log}}$ in the $\widehat U(\K^*_{\log})$ norm.

However, there is no appropriate absolute value bound for (\ref{second}), which in this case equals
\be\lb{lb}
\frac {(\rho \cos \theta + r)r^2 \sin^2 \theta g(\phi) \partial_R A_1(y) A_\ell^\#(z)}{8r_1^2 r_2^2 (\rho^2-r^2)}.
\ee
Here we use the spare copy of (\ref{exp}), which produces a cancellation:
\be\lb{suma}
(\ref{lb}) + (\ref{exp}) = - \frac {(\rho \cos \theta + r) \sin^2 \theta g(\phi) \partial_R A_k(y) A_\ell(z)}{8r_1^2 r_2^2} \les \frac {|\nabla A_k(y)| |A_\ell(z)|}{\rho^2 r_2} \les \frac {|\nabla A_k(y)| |A_\ell(z)|}{r_1 r_2^2}.
\ee
This expression can be estimated in the usual way and admits a bound of $\|\nabla A\|_\K \|A^\#\|_\KK$ in $\widehat U(L^\infty)$ and one of $\|\nabla A\|_{\K_{\log}} \|A^\#\|_{\K_{2, \log^2}}$ in $\widehat U(\K^*_{\log})$.

When $k=\ell=1$, we replace $\partial^2_R A_1(y)$ in (\ref{prim}) by
$$
\frac 1 R \int_{y_0}^y \partial^2_R A_1(\eta) \dd \eta = \frac {2(\partial_R A_1(y) - \partial_R A_1(y_0))}{\sqrt{\rho^2-r^2} \sin \theta}
$$
and estimate the difference separately. Integrating in $\phi$, the term involving $\partial_R A_1(y_0)$ vanishes, because
$$
\int_0^{2\pi} \partial_R A_1(y_0) \dd \phi = 0.
$$
This leaves
\be\lb{tert}
\frac {(x_1-y_1)(y_1-z_1)\rho^2\sin\theta\, \partial_R A_1(y) A_1^\#(z)}{2r_1^2 r_2^2 (\rho^2-r^2)^{3/2}}.
\ee
Combining (\ref{tert}) with (\ref{second}), a factor of $\rho^2-r^2$ cancels, resulting in
$$
(\ref{tert})+(\ref{second}) = \frac {(x_1-y_1)(y_1-z_1)\sin \theta\, \partial_R A_1(y) A_1^\#(z)}{2r_1^2r_2^2 \sqrt{\rho^2-r^2}} \les \frac {\sin^2 \theta\, |\nabla A_1(y)| |A_1^\#(z)|}{r_1r_2 R} \les \frac {|\nabla A_1(y)| |A_1^\#(z)|}{\rho^2 R}.
$$
This is no more than $\|\nabla^2 A_1\|_{L^1} \|A^\#\|_\KK$ in the $\widehat \U(L^\infty)$ norm and $\|\nabla^2 A_1\|_{L^1} \|A^\#\|_{\K_{2, \log^2}}$ in the $\widehat \U(\K^*_{\log})$ norm.

The difference also has a gain:
$$
\bigg|\partial^2_R A_1(y) - \frac 1 R \int_{y_0}^y \partial^2_R A_1(\eta) \dd \eta\bigg| \leq \frac 1 R \int_{y_0}^y |\partial^2_R A_1(y) - \partial^2_R A_1(\eta)| \dd \eta \les R \|\nabla^3 A\|_{L^\infty}.
$$
Again, the $R$ factor cancels one of the powers of $R$ in the denominator, making the expression integrable.
%This results in a term of size
%$$
%\frac {(\rho^2 \cos^2 \theta - r^2)\rho^2 \sin^3 \theta \|\nabla^3 A\|_{L^\infty} |A_1^\#(z)|}{2r_1^2r_2^2 \sqrt{\rho^2-r^2}},
%$$
%which after integrating in $y$ produces a bound of $r^2 \|\nabla^3 A\|_{L^\infty}|A^\#_1(z)|$.
We actually use the sharper estimate
$$
\bigg|\partial^2_R A_1(y) - \frac 1 R \int_{y_0}^y \partial^2_R A_1(\eta) \dd \eta\bigg| = \frac 1 R \bigg|\int_{y_0}^y \partial^2_R A_1(y) - \partial^2_R A_1(\eta) \dd \eta\bigg| \leq \frac 1 R \int_{y_0}^y (R-R_\eta) |\nabla^3 A(\eta)| \dd \eta.
$$
Here $R-R_\eta=|y-\eta|$, where $R=R_y=\sqrt{y_1^2+y_2^2}$ and $R_\eta=\sqrt{\eta_1^2+\eta_2^2}$ is the corresponding value for $\eta$.

Using this formula instead in (\ref{prim}) leads to
$$
\les \frac {\rho^2\sin^4 \theta (\int_{y_0}^y (R-R_\eta)|\nabla^2A(\eta)|\dd \eta) A_\ell^\#(z)}{r_1 r_2 R^3} \les \frac {(\int_{y_0}^y (R-R_\eta)|\nabla^2A(\eta)|\dd \eta) A_\ell^\#(z)}{\rho^2 R^3}.
$$
We first integrate in $y$ in cylindrical coordinates and are left with an integral in $\eta$. There is a factor of $R$ from using cylindrical coordinates, so the integral is
$$
\int_\eta^{y_1} \frac {R_y-R_\eta}{R_y^2},
$$
where the upper bound $y_1$ is comparable to $r$. The integral is at most $1+\log (r/R_\eta)$ and an extra factor of $1/R_\eta$ appears because computations so far were done in cylindrical coordinates:
$$
\frac {(1+|\log R_\eta|+|\log r|)|\nabla^3 A(\eta)| |A_\ell^\#(z)|}{\rho^2 R_\eta}.
$$
Using Lemma \ref{lema2log}, this is bounded by $\|\nabla^4 A\|_{L \log L} \|A^\#\|_\KK + \|\nabla^4 A\|_{L^1} \|A^\#\|_{\K_{2, \log}}$ in $\widehat \U(L^\infty)$ and by $\|\nabla^4 A\|_{L \log L} \|A^\#\|_{\K_{2, \log^2}}$ in $\widehat \U(\K^*_{\log})$.

This final term is another one that made the logarithmic modifications necessary in the previous computations.
%Finally, the terms with bounds $r \|\nabla^2 A\|_{L^\infty}|A_\ell^\#(z)|$ and $r^2 \|\nabla^3 A\|_{L^\infty}|A_1^\#(z)|$ are the reason why we need weights, because these expressions are bounded as operators in $\B(\langle x \rangle^{-1} L^\infty, \langle x \rangle L^\infty)$, respectively $\B(\langle x \rangle^{-2} L^\infty, \langle x \rangle^2 L^\infty)$.
\end{proof}

Recall (\ref{ttilde})
$$
\tilde T = [R_0(\lambda^2) \nabla A R_0(\lambda^2)](x, z).
$$
\begin{proposition}\lb{main_linear} If $A \in \dot W^{2, 1}$, then $\tilde T \in \widehat \U(\K, L^\infty) \cap \widehat \U(L^1, \K^*)$, with $\|\tilde T\| \les \|\nabla^2 A\|_{L^1}$.
\end{proposition}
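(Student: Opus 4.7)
The plan is to mirror the argument of Proposition \ref{main_bilinear}, simplified because only one factor of $A$ appears and only one of the two free resolvents carries a derivative; in particular no logarithmic refinements of the Kato spaces are needed. Using (\ref{derivative}) and integration by parts to move the gradient onto the first resolvent,
\[
\tilde T(\lambda)(x, z) = \int_{\R^3} \nabla_x R_0(\lambda^2)(x, y) \cdot A(y)\, R_0(\lambda^2)(y, z)\,dy = I_1 - I_0,
\]
where
\[
I_0 = \frac{1}{16\pi^2}\int \frac{e^{i\lambda\rho}\,\vec r_1 \cdot A(y)}{r_1^3\, r_2}\,dy, \qquad I_1 = \frac{i\lambda}{16\pi^2}\int\frac{e^{i\lambda\rho}\,\vec r_1 \cdot A(y)}{r_1^2\, r_2}\,dy,
\]
and $\rho = r_1 + r_2$. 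Passing to elliptical coordinates so that the phase becomes $e^{i\lambda\rho}$, for $I_0$ the Fourier transform is $\widehat{I_0}(\rho) \propto \int_{\Sigma_\rho} \vec r_1 \cdot A(y)/(r_1^3 r_2)\,J(y)\,d\mu$, and using $|\vec r_1|/r_1 = 1$,
\[
\int_0^\infty \bigl|\widehat{I_0}(\rho, x, z)\bigr|\,d\rho \lesssim \int_{\R^3} \frac{|A(y)|}{|x-y|^2\,|y-z|}\,dy.
\]
This kernel factors as the composition of multiplication by $|A(y)|/|x-y|^2$, which lies in $\B(L^\infty) \cap \B(\K^*)$ of norm $\lesssim \|A\|_\KK$ by (\ref{k2bd}), with $R_0(0) \in \B(\K, L^\infty) \cap \B(L^1, \K^*)$, so $I_0$ produces $\widehat\U(\K, L^\infty) \cap \widehat\U(L^1, \K^*)$ norm $\lesssim \|A\|_\KK$.

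For $I_1$, use $i\lambda\, e^{i\lambda\rho} = \partial_\rho e^{i\lambda\rho}$ and integrate by parts in $\rho$ to obtain $\widehat{I_1}(\rho) \propto \partial_\rho C_1(\rho)$, where $C_1(\rho, x, z) = \int_{\Sigma_\rho} \vec r_1\cdot A(y)/(r_1^2 r_2)\,J(y)\,d\mu$. Applying (\ref{dif}), the algebraic cancellation
\[
\partial_\rho(r_1^{-2}\,r_2^{-1}) + \Bigl(\frac{1}{2r_1} + \frac{1}{2r_2}\Bigr) r_1^{-2}\,r_2^{-1} = -\frac{1}{2}\,r_1^{-3}\,r_2^{-1}
\]
reduces the prefactor-differentiation contribution to exactly the same form as $I_0$. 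What remains is $\int_{\Sigma_\rho} \partial_\rho(\vec r_1 \cdot A(y))/(r_1^2 r_2)\,J\,d\mu$, where $\partial_\rho(\vec r_1 \cdot A(y)) = -v \cdot A(y) + \vec r_1 \cdot (v\cdot\nabla)A(y)$ and $v = \partial_\rho y$. Split the $y$-integration according to $\rho > 2r$ or $\rho \leq 2r$: in the exterior region $|v| \lesssim 1$, and the two terms are dominated pointwise by $|A(y)|/(r_1^2 r_2) + |\nabla A(y)|/(r_1 r_2)$, composing through the same $\B(L^\infty)\cap \B(\K^*)$-plus-$R_0(0)$ scheme to norms $\lesssim \|A\|_\KK + \|\nabla A\|_\K$. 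In the interior region the transverse components of $v$ are singular, with $|v_{2,3}| \lesssim \rho \sin\theta/\sqrt{\rho^2 - r^2}$, and the bound $\rho \sin\theta \leq 2\sqrt{r_1 r_2}$ combined with Lemmas \ref{lema1}, \ref{lema2}, and \ref{lema3} reduces the remaining terms to estimates controlled by $\|\nabla^2 A\|_{L^1}$.

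Assembling all contributions and applying the chain inequality $\|A\|_\KK \leq \|\nabla A\|_\K \leq \|\nabla^2 A\|_{L^1}$ from the lemma preceding this section, we obtain $\|\tilde T\|_{\widehat\U(\K,L^\infty)\cap\widehat\U(L^1,\K^*)} \lesssim \|\nabla^2 A\|_{L^1}$. The main obstacle is the interior region $\rho \leq 2r$, where the singular $1/\sqrt{\rho^2 - r^2}$ behavior of $v$ must be absorbed by $\rho\sin\theta \leq 2\sqrt{r_1 r_2}$ before the Kato-space lemmas can be applied. However, because there is no second copy of $A$ or $\nabla$, the most delicate cancellations of the bilinear proof—those producing expressions like (\ref{can}) and (\ref{suma})—are not needed here, and the logarithmic modifications $\K_{\log}$, $\K_{2,\log^2}$, $\K^*_{\log}$ can be avoided entirely.
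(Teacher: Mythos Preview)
Your proposal is correct and follows essentially the same approach as the paper: decompose $\tilde T$ into the two terms coming from (\ref{derivative}), bound the $r_1^{-3}r_2^{-1}$ piece directly via $\|A\|_{\KK}$, integrate the $i\lambda$ piece by parts in $\rho$ and use the differentiation formula (\ref{dif}) so that the prefactor contribution collapses to the previous case, then control $\partial_\rho(\vec r_1\cdot A)/(r_1^2 r_2)$ by separating the longitudinal component $v_1$ from the singular transverse components $v_{2,3}$ and invoking Lemmas \ref{lema2} and \ref{lema3}. The paper carries out the last step with an explicit $k=1$ versus $2\le k\le 3$ case split rather than your interior/exterior split, and it does not use Lemma \ref{lema1} at all here---only Lemmas \ref{lema2} and \ref{lema3} are needed, so your reference to Lemma \ref{lema1} is superfluous---but the substance of the argument is the same.
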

Again note that $\K^* \subset \K^*_{log}$. If $V \in \K$, it follows that $T_2=\tilde T V \in \widehat \U(L^\infty) \cap \widehat U(\K^*)$, and if $V \in \K_{\log}$ then $T_2 \in \widehat U(\K^*_{\log})$ as well.
\begin{proof}
The integral kernel of $\tilde T(\lambda)$ is
\be\lb{t1}\begin{aligned}
\tilde T(\lambda)(x, z) &= \int_{\R^3} (\nabla_x R_0(\lambda^2)(x, y) \cdot A(y)) R_0(\lambda^2)(y, z) \dd y\\
&= \frac 1 {16 \pi^2} \int_{\R^3} \bigg(\frac {i\lambda e^{i\lambda|x-y|}}{|x-y|} - \frac {e^{i\lambda|x-y|}}{|x-y|^2} \bigg) \bigg(\frac {x-y}{|x-y|} \cdot A(y)\bigg) \frac {e^{i\lambda|y-z|}}{|y-z|} \dd y.
\end{aligned}\ee
	There are two terms to consider. Among other notations introduced in Section \ref{notations}, recall $r_1=|x-y|$, $\vec r_1=x-y$, $r_2=|y-z|$, $\rho=r_1+r_2$, $r=|x-z|$, $R=\sqrt{y_1^2+y_2^2}$, and
	$$
	\Sigma_\rho=\{y\in\R^3 \mid \rho = r_1+r_2 = |x-y|+|y-z|\}.
	$$
	The two terms are
	$$
	J_0 = \int_{\R^3} \frac {i\lambda e^{i\lambda \rho}(\vec r_1 \cdot A(y))}{r_1^2r_2} \dd y,\ J_1 = -\int_{\R^3} \frac {e^{i\lambda \rho}(\vec r_1 \cdot A(y))}{r_1^3r_2} \dd y,
	$$
	Let
	$$
	C_0(\rho, x, z) = \int_{\Sigma_\rho} \frac {\vec r_1 \cdot A(y)} {r_1^2r_2} J(y) \dd \mu,\ C_1(\rho, x, z) = -\int_{\Sigma_\rho} \frac {\vec r_1 \cdot A(y)} {r_1^3r_2} J(y) \dd \mu.
	$$
	Then $\widehat J_0 = -\partial_\rho C_0$ and $\widehat J_1 = C_1$.
	
	We start by proving the $\widehat U(\K, L^\infty)$ bound. Clearly
	\be\lb{model}
	\int_0^\infty |C_1(\rho, x, z)| \dd \rho \leq \int_{\R^3} \frac {|A(y)| \dd y}{|x-y|^2 |y-z|},
	\ee
	which is the integral kernel of a bounded operator from $\K$ to $L^\infty$, of norm $\|A\|_\KK$. Likewise
	\be\lb{c0}\begin{aligned}
	\partial_\rho C_0 &= \int_{\Sigma_\rho} \partial_\rho \bigg(\frac {\vec r_1 \cdot A(y)} {r_1^2r_2}\bigg) + \frac {\vec r_1 \cdot A(y)} {r_1^2r_2} \bigg( \frac 1 {2r_1} + \frac 1 {2r_2} \bigg) J(y) \dd \mu \\
	&= \int_{\Sigma_\rho} \frac {\partial_\rho (\vec r_1 \cdot A(y))} {r_1^2r_2} - \frac {\vec r_1 \cdot A(y)} {2r_1^3r_2} J(y) \dd \mu.
	\end{aligned}\ee
	We already know how to handle the second term, see (\ref{model}). Regarding the first term,
	$$
	\partial_\rho [(x_k-y_k)A_k(y)] = \partial_\rho (x_k-y_k) A_k(y) + (x_k-y_k) v \cdot \nabla A_k(y),
	$$
	so we break its contribution into six parts. For $k=1$, $\ds \partial_\rho (x_1-y_1) A_1(y) = -\frac {\cos \theta} 2 A_1(y)$ is similar to (\ref{model}). In the second term
	$$
	\frac {(x_1-y_1) v \cdot \nabla A_1(y)}{r_1^2 r_2},
	$$
	the contribution of $v_1$ is bounded by
	$$
	\frac {|\partial_1 A_1(y)|}{r_1 r_2}
	$$
	and gives rise to an integral kernel in $\B(\K, L^\infty)$ whose operator norm is bounded by $\|\nabla A\|_\K$.
	
	Write the contributions of $v_2$ and $v_3$ together as
	$$
	\frac {(x_1-y_1) \rho \sin \theta \partial_R A_1(y)}{r_1^2 r_2 \sqrt{\rho^2-r^2}}.
	$$
	When $\rho > 2r$, the same applies as above. When $\rho \leq 2r$, we use the bound
	$$
	\les \frac {\rho \sin^2 \theta \partial_R A_1(y)}{r_1 r_2 R} \les \frac {\partial_R A_1(y)}{\rho R},
	$$
	whose integral is at most $\|\nabla^2 A\|_{L^1} / \rho$ by Lemma \ref{lema2}. This gives rise to an integral kernel whose operator norm is bounded by $\|\nabla^2 A\|_{L^1}$.
	
	For $2 \leq k \leq 3$, the second term is
	$$
	\frac {\sqrt{\rho^2-r^2} \sin \theta g(\phi) v \cdot \nabla A_k(y)}{r_1^2 r_2}.
	$$
	For the $v_1$ component $\sqrt{\rho^2-r^2} \sin \theta \leq \min(r_1, r_2) \leq r_1$ and again we get a bound of $\|\nabla A\|_\K$. For the other two components we get
	$$
	\frac {\rho \sin^2 \theta g(\phi) \partial_R A_k(y)}{r_1^2 r_2} \les \frac {|\partial_R A_k(y)|}{r_1 \rho}
	$$
	resulting in the same bound.
	
	The first term is
	$$
	\frac {\rho \sin \theta g(\phi) A_k(y)}{r_1^2 r_2 \sqrt{\rho^2-r^2}} \les \frac {\rho \sin^2 \theta |A_k(y)|}{r_1^2 r_2 R} \les \frac {|A_k(y)|}{\rho r_1 R}.
	$$
	Lemmas \ref{lema2} and \ref{lema3} show that the $y$ integral is at most $\|\nabla^2 A\|_{L^1} / \rho$, leading to a bound of $\|\nabla^2 A\|_{L^1}$ for the operator norm.
	
	Finally, the same integral kernel bounds we have already proved show that these integral kernels are also in $\B(L^1, \K^*)$, with the same norms.
\end{proof}

Also consider
$$
T_3 = [R_0(\lambda^2) V R_0(\lambda^2) \nabla A](x, z).
$$
The computations are entirely analogous to those for $\tilde T$, so there is some repetition in the proof below.
%We include the proof for completeness.

\begin{proposition}\lb{main_linear2} If $V \in \dot W^{2, 1}$ and $A \in \K$, then $T_3 \in \widehat \U(L^\infty) \cap \widehat U(\K^*)$, with $\|T_3\| \les \|V\|_{\dot W^{2, 1}} \|A\|_\K$. If in addition $A \in \K_{\log}$, then $T_3 \in \widehat U(\K^*_{\log})$ with $\|T_3\| \les \|V\|_{\dot W^{2, 1}} \|A\|_{\K_{\log}}$.
\end{proposition}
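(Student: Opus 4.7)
The plan is to reduce $T_3$ to operators already controlled by Proposition~\ref{main_linear} and by the elementary Kato bound (\ref{model}) via a spatial integration by parts, and then to assemble the result through the algebroid structure of $\widehat \U$.

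Starting from
$$
T_3(\lambda)(x, z) = A(z) \cdot \int_{\R^3} R_0(\lambda^2)(x, y)\, V(y)\, \nabla_y R_0(\lambda^2)(y, z)\, \dd y,
$$
I would integrate by parts in $y$ and use $\nabla_y R_0(x, y) = -\nabla_x R_0(x, y)$ to rewrite
$$
T_3(x, z) = A(z) \cdot P_1(x, z) - A(z) \cdot P_2(x, z),
$$
where $P_1(x, z) = \int \nabla_x R_0(x, y)\, V(y)\, R_0(y, z)\, \dd y$ is vector-valued with $k$-th component the $\tilde T$-kernel associated to the vector potential $V e_k$, and $P_2(x, z) = \int R_0(x, y)\, \nabla V(y)\, R_0(y, z)\, \dd y$ is vector-valued with $k$-th component the standard composition $R_0 W R_0$ for $W = \partial_k V$.

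Each component of $P_1$ is then covered by Proposition~\ref{main_linear} applied to $V e_k$: since $\|\nabla^2(V e_k)\|_{L^1} = \|\nabla^2 V\|_{L^1}$, one obtains $P_1 \in \widehat \U(\K, L^\infty) \cap \widehat \U(L^1, \K^*)$ with norm $\les \|V\|_{\dot W^{2, 1}}$. For each component of $P_2$, the $\rho$-integrated kernel is majorized by $\int |\partial_k V(y)|/(|x-y||y-z|)\, \dd y$ exactly as in (\ref{model}), giving membership in $\B(\K, L^\infty) \cap \B(L^1, \K^*)$ with norm $\les \|\partial_k V\|_\K$; the lemma $\|\nabla V\|_\K \leq \|\nabla^2 V\|_{L^1}$ then furnishes the same overall bound.

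To assemble, one writes $T_3 = \sum_k (P_1^k - P_2^k) \circ M_{A_k}$, where $M_{A_k}$ denotes pointwise multiplication by $A_k$, a $\lambda$-independent element of $\widehat \U$. The multiplier $M_A$ lies in $\widehat \U(L^\infty, \K)$ with norm $\|A\|_\K$ because $\|Af\|_\K \leq \|A\|_\K \|f\|_{L^\infty}$, and dually in $\widehat \U(\K^*, L^1)$ with the same norm. The algebroid composition of $\widehat \U$ then yields $T_3 \in \widehat \U(L^\infty) \cap \widehat \U(\K^*)$ with norm $\les \|V\|_{\dot W^{2, 1}} \|A\|_\K$. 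For the refined $\widehat \U(\K^*_{\log})$ bound assuming $A \in \K_{\log}$, I would replace $M_A: \K^* \to L^1$ by $M_A: \K^*_{\log} \to L^1$ of norm $\|A\|_{\K_{\log}}$, using the duality $\K_{\log} \cdot \K^*_{\log} \subset L^1$ recalled in the paper, and compose with $P_j: L^1 \to \K^* \subset \K^*_{\log}$.

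The main obstacle is that the naive mirror of the $\tilde T$ proof, attempting to bound the $\rho$-integral of the analogue of $C_1$ directly, yields a term of size $\|V\|_\K \|A\|_\KK$ whose second factor is not controlled by the hypothesis $A \in \K$. The integration by parts in $y$ is the essential manoeuvre that trades the offending $|y-z|^{-2}$ singularity for a $\nabla V$ coefficient, which is tame under $V \in \dot W^{2, 1}$ via the lemma comparing $\|\nabla V\|_\K$ and $\|\nabla^2 V\|_{L^1}$.
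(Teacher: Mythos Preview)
Your argument is correct and takes a genuinely different route from the paper's.

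The paper proceeds by direct computation: it expands the kernel of $T_3$ into the two pieces $K_0$ and $K_1$, sets $\widehat K_0=-\partial_\rho D_0$ and $\widehat K_1=D_1$, and then estimates $D_1$ and each term of $\partial_\rho D_0$ in elliptical coordinates, exactly mirroring the proof of Proposition~\ref{main_linear} for $\tilde T$. Your approach instead performs the spatial integration by parts in $y$ once, which converts $T_3$ into $(P_1-P_2)\circ M_A$ with $P_1^k=\tilde T|_{A=Ve_k}$ already handled by Proposition~\ref{main_linear} and $P_2^k=R_0(\partial_k V)R_0$ handled by the elementary Kato bound; the final factor $M_A$ is then absorbed via the algebroid composition.

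What each approach buys: your reduction is shorter, reuses Proposition~\ref{main_linear} as a black box, and lands squarely on the stated bound $\|V\|_{\dot W^{2,1}}\|A\|_{\K}$ (respectively $\|A\|_{\K_{\log}}$). The paper's direct computation, by contrast, produces from the $D_1$ piece the bound $\|V\|_{\K}\|A\|_{\KK}$, which strictly requires $A\in\KK$ rather than merely $A\in\K$; this is immaterial for the main theorem (where $A\in\K_{2,\log^2}\subset\KK$) but does not literally match the proposition's hypotheses. On the other hand, the paper's term-by-term approach keeps all estimates at the level of explicit kernel majorants, which is the style needed elsewhere (e.g.\ for $T_1$) where no such clean integration-by-parts reduction is available.
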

This estimate is suboptimal and probably $V \in \K$ is sufficient for the main results, but for the sake of brevity we handle this term in the same way as the others.
\begin{proof}
	The integral kernel of $T_3(\lambda)$ is
	$$\begin{aligned}
		T_3(\lambda)(x, z) &= \int_{\R^3} R_0(\lambda^2)(x, y) V(y)  (\nabla_y R_0(\lambda^2)(y, z) \cdot A(z)) \dd y\\
		&= \frac 1 {16 \pi^2} \int_{\R^3} \frac {e^{i\lambda|x-y|}}{|x-y|} V(y) \bigg(\frac {i\lambda e^{i\lambda|y-z|}}{|y-z|} - \frac {e^{i\lambda|y-z|}}{|y-z|^2} \bigg) \bigg(\frac {y-z}{|y-z|} \cdot A(z)\bigg) \dd y.
	\end{aligned}$$
	For notations the reader is referred to Section \ref{notations}. There are two terms to consider, namely
	$$
	K_0 = \int_{\R^3} \frac {i\lambda e^{i\lambda \rho}V(y)(\vec r_2 \cdot A(z))}{r_1r_2^2} \dd y,\ K_1 = -\int_{\R^3} \frac {e^{i\lambda \rho}V(y)(\vec r_2 \cdot A(z))}{r_1r_2^3} \dd y,
	$$
	Let
	$$
	D_0(\rho, x, z) = \int_{\Sigma_\rho} \frac {V(y)(\vec r_2 \cdot A(z))} {r_1 r_2^2} J(y) \dd \mu,\ D_1(\rho, x, z) = -\int_{\Sigma_\rho} \frac {V(y)(\vec r_2 \cdot A(z))} {r_1r_2^3} J(y) \dd \mu.
	$$
	Then $\widehat K_0 = -\partial_\rho D_0$ and $\widehat K_1 = D_1$.
	
	We begin with the proof of the $\widehat U(L^\infty)$ bound. Clearly
	\be\lb{model2}
	\int_0^\infty |D_1(\rho, x, z)| \dd \rho \leq \int_{\R^3} \frac {|V(y)||A(z)| \dd y}{|x-y||y-z|^3}
	\ee
	and when integrating in $z$ as well we obtain a bound of $\|V\|_\K \|A\|_\KK$. Likewise
	$$\begin{aligned}
	\partial_\rho D_0 &= \int_{\Sigma_\rho} \partial_\rho \bigg(\frac {V(y)(\vec r_2 \cdot A(z))} {r_1r_2^2}\bigg) + \frac {V(y)(\vec r_2 \cdot A(z))} {r_1r_2^2} \bigg( \frac 1 {2r_1} + \frac 1 {2r_2} \bigg) J(y) \dd \mu \\
	&= \int_{\Sigma_\rho} \frac {\partial_\rho [V(y)(\vec r_2 \cdot A(z))]} {r_1r_2^2} - \frac {V(y)(\vec r_2 \cdot A(z))} {2r_1r_2^3} J(y) \dd \mu.
	\end{aligned}$$
	The second term is similar to (\ref{model2}). Regarding the first term,
	$$
	\partial_\rho [V(y)(y_k-z_k)A_k(z)] = V(y) \partial_\rho (y_k-z_k) A_k(z) + (v \cdot \nabla V) (y_k-z_k) A_k(z).
	$$
	Again we consider six different expressions, each term for $1 \leq k \leq 3$.
	
	For $k=1$, $\ds V(y) \partial_\rho (y_1-z_1) A_1(z) = V(y) \frac {\cos \theta} 2 A_1(z)$ is similar to (\ref{model2}). In the second term
	$$
	\frac {(v \cdot \nabla V(y))(y_1-z_1) A_1(z)}{r_1 r_2^2},
	$$
	the contribution of $v_1$ is bounded by
	$$
	\frac {|\nabla V(y)||A_1(y)|}{r_1 r_2},
	$$
	which leads to a bound of $\|\nabla V\|_\K \|A\|_\K$.
	
	Write the contributions of $v_2$ and $v_3$ together as
	$$
	\frac {\rho \sin \theta \partial_R V(y) (y_1-z_1) A_1(z)}{r_1 r_2^2 \sqrt{\rho^2-r^2}}.
	$$
	When $\rho > 2r$, the same applies as above. When $\rho \leq 2r$, we use the bound
	$$
	\les \frac {\rho \sin^2 \theta \partial_R V(y) A_1(z)}{r_1 r_2 R} \les \frac {\partial_R V(y) A_1(z)}{\rho R},
	$$
	leading to $\|\nabla^2 V\|_{L^1} \|A\|_\K$ by Lemma \ref{lema2}.
	
	For $2 \leq k \leq 3$, the second term is
	$$
	\frac {(v\cdot \nabla V) \sqrt{\rho^2-r^2} \sin \theta g(\phi) A_k(z)}{r_1 r_2^2}.
	$$
	For the $v_1$ component $\sqrt{\rho^2-r^2} \sin \theta \leq \min(r_1, r_2) \leq r_2$ and again we get a bound of $\|\nabla V\|_\K \|A\|_\K$. For the other two components we get
	$$
	\frac {\rho \sin^2 \theta g(\phi) \partial_R V A_k(z)}{r_1 r_2^2} \les \frac {|\partial_R V(y)| |A_k(z)|}{r_2 \rho}
	$$
	resulting in the same bound.
	
	The first term is
	$$
	\frac {V(y) \rho \sin \theta g(\phi) A_k(z)}{r_1 r_2^2 \sqrt{\rho^2-r^2}} \les \frac {|V(y)| \rho \sin^2 \theta |A_k(z)|}{r_1 r_2^2 R} \les \frac {|V(y)| |A_k(z)|}{\rho R r_2}.
	$$
	Lemmas \ref{lema2} and \ref{lema3} lead to a bound of $\|\nabla^2 V\|_{L^1} \|A\|_\K$.
	
	Finally, the same integral kernel bounds proved above can also be used to prove the $\widehat U(\K^*)$ and $\widehat U(\K^*_{\log})$ boundedness, with the appropriate norm.
\end{proof}

We also need appropriate bounds for $\partial_\lambda T$ and $\partial_\lambda \tilde T$.

\begin{proposition}\lb{prop14} Assume that $A \in \K_{2, \log^2}$ is such that $\nabla A \in \K_{\log} \cap \K_{2, \log^2}$, $\nabla^2 A \in \K_{2, \log^2} \cap L^1$, $\nabla^3 A, \nabla^4 A \in L \log L$, and let $A^\# \in \K_{\log} \cap \K_{2, \log^2}$, $V \in \K \cap \dot W^{2, 1}$, and $V^\# \in \K_{\log}$. Then the operator $\partial_\lambda T$ belongs to $\widehat \U(\K^*_{\log}, L^\infty)$, with norm
$$
\|\partial_\lambda T\| \les Bil(A, A^\#) + \|A\|_{\K_{\log}} \|A^\#\|_{\K_{2, \log^2}} + \|A\|_{\dot W^{2, 1}} \|V^\#\|_{\K_{\log}} + \|V\|_{\dot W^{2, 1}} \|A^\#\|_{\K_{\log}} + \|V\|_\K \|V^\#\|_{\K_{\log}}.
$$
Furthermore, $\partial_\lambda \tilde T \in \U(L^1, L^\infty)$ with $\|\partial_\lambda \tilde T\| \les \|\nabla^2 A\|_{L^1}$.
\end{proposition}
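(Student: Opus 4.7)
The strategy is to expand $\partial_\lambda T$ and $\partial_\lambda \tilde T$ via Leibniz's rule so that every resulting term contains exactly one factor $\partial_\lambda R_0(\lambda^2)$ in place of one $R_0(\lambda^2)$. Since the Fourier picture of $\partial_\lambda R_0(\lambda^2)$ is $(i/4\pi)\delta_{|x-y|=\rho}$, this replacement simply removes one power of $1/|x-y|$ from the corresponding integral kernel; equivalently, in the $\rho$-picture we acquire the extra weight $\rho = r_1+r_2$, which distributes as one extra power of $r_1$ or $r_2$ across the two halves of each composition and lowers the total singularity by one.

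\textbf{The linear factor.} For $\partial_\lambda \tilde T$, write
$$
\partial_\lambda \tilde T = (\partial_\lambda R_0)\,\nabla A\, R_0 + R_0 \,\nabla A\, (\partial_\lambda R_0),
$$
and rerun the computation of Proposition~\ref{main_linear}, this time integrating $\rho |C_0|$, $\rho|\partial_\rho C_0|$, and $\rho|C_1|$ in $\rho$. After distributing $\rho = r_1+r_2$, each integrand reduces to an expression of type $|A(y)|/r_1^2$, $|\nabla A(y)|/(r_1 r_2)$, or $|\nabla^2 A(y)|/(\rho R)$, all controlled by $\|A\|_{\KK}+\|\nabla A\|_{\K}+\|\nabla^2 A\|_{L^1}\les\|\nabla^2 A\|_{L^1}$ via the final lemma of Section~2.5 together with Lemmas~\ref{lema2} and \ref{lema3}. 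Taking $\sup_{x,z}$ of the resulting kernel yields the $\U(L^1,L^\infty)$ bound.

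\textbf{The bilinear factor.} Split $T=T_1+T_2+T_3+T_4$ and differentiate each piece. The easy ones are $\partial_\lambda T_2 = (\partial_\lambda\tilde T)\, V^\#$, which inherits $\widehat\U(L^1,L^\infty)$ from the previous step and composes with the multiplier $V^\#\in\K_{\log}$ mapping $\K^*_{\log}\to L^1$ (by the duality pairing of $\K_{\log}$ and $\K^*_{\log}$), giving $\|A\|_{\dot W^{2,1}}\|V^\#\|_{\K_{\log}}$; the symmetric $\partial_\lambda T_3$ uses Proposition~\ref{main_linear2} with one $R_0$ differentiated, yielding $\|V\|_{\dot W^{2,1}}\|A^\#\|_{\K_{\log}}$; and $\partial_\lambda T_4$ is the purely scalar case treated in \cite{BecGol1,BecGol2}, producing $\|V\|_\K\|V^\#\|_{\K_{\log}}$. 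For $\partial_\lambda T_1$ we rerun the lengthy case analysis of Proposition~\ref{main_bilinear} with the weight $\rho$ absorbed into either $r_1$ or $r_2$; every worst term thereby becomes strictly less singular, and the three essential cancellations --- (\ref{can}), the pairing (\ref{tert})+(\ref{second}), and (\ref{suma}) --- are angular or line-segment averaging identities, unaffected by the extra $\rho$ weight, and carry over verbatim. The net contribution is $Bil(A,A^\#)$ plus the single asymmetric term $\|A\|_{\K_{\log}}\|A^\#\|_{\K_{2,\log^2}}$ that arises from differentiating the first rather than the second $R_0$.

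\textbf{Principal obstacle.} The chief difficulty is bookkeeping: for each of the many sub-cases in Proposition~\ref{main_bilinear} one must choose whether to absorb $\rho$ into $r_1$ or $r_2$ so as to land in an already-established Kato-type inequality, and one must verify that the delicate cancellations near (\ref{can}) and (\ref{suma}) --- which replace $\partial_R A$ or $\partial_R^2 A$ by its average along a line segment --- continue to eliminate the singular factors $(\rho^2-r^2)^{-1/2}$ and $(\rho^2-r^2)^{-1}$ after differentiation in $\lambda$. The weaker output space $L^\infty$ (rather than $\K^*_{\log}$) is precisely what permits the argument under the same hypotheses as Proposition~\ref{main_bilinear}, because testing the resulting kernel against $\K^*_{\log}$ inputs now reduces to an $L^\infty_x$ estimate uniform in the pairing point rather than to an estimate preserving the $\K^*_{\log}$ structure on the output side.
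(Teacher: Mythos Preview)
Your proposal is correct and follows the same strategy as the paper: split $\partial_\lambda T$ by the Leibniz rule, observe that the $\lambda$-derivative lowers the total singularity by one power so that the target space relaxes from $\K^*_{\log}$ to $L^\infty$, and rerun the case analysis of Propositions~\ref{main_bilinear}--\ref{main_linear2}; the handling of $\partial_\lambda\tilde T$, $T_2$, $T_3$, $T_4$ matches the paper's exactly.

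The one organizational difference worth recording concerns $\partial_\lambda T_1$. The paper re-derives its kernel from scratch via the explicit formula $\nabla\partial_\lambda R_0(\lambda^2) = -(\lambda/4\pi)e^{i\lambda|x-y|}(x-y)/|x-y|$, which produces terms $L_1,\dots,L_4$ carrying factors of $\lambda$ and $\lambda^2$; a fresh round of integration by parts in $\rho$ is then needed, and a singular contribution $\partial_\rho b_0/(r_1 r_2^3)$ must be seen to cancel between $\partial_\rho^2 E_1$ and $\partial_\rho E_2$ before the analysis can proceed. Your Fourier-side identity $\widehat{\partial_\lambda T_1}=i\rho\,\widehat T_1$ bypasses this: since $\widehat T_1(\rho)$ was already reduced in Proposition~\ref{main_bilinear} to the integrand (\ref{rest}) plus easy terms, you simply multiply by $\rho=r_1+r_2$ and distribute, inheriting the ellipsoidal cancellations (\ref{can}), (\ref{suma}), (\ref{tert})+(\ref{second}) rather than re-establishing analogues. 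Both routes land on the same pointwise integrand bounds with one fewer power of $r_1$ or $r_2$ in the denominator; yours is slightly more economical, the paper's slightly more explicit about where the extra asymmetric term $\|A\|_{\K_{\log}}\|A^\#\|_{\K_{2,\log^2}}$ originates (namely from the region $r_2\le r_{20}$ in the analysis of $b_0/(r_1 r_2^4)$).
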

Recall that the expression $Bil$ is defined in (\ref{bil}).
\begin{proof} Recall that $T$ is the sum of four terms, from $T_1$ to $T_4$, as per (\ref{t14}). We analyze each of them, one by one.

Since
$$
T_1(\lambda) = R_0(\lambda^2) \nabla A R_0(\lambda^2) \nabla A^\#,
$$
it follows that
$$
\partial_\lambda T_1 = \partial_\lambda R_0(\lambda^2) \nabla A R_0(\lambda^2) \nabla A^\# + R_0(\lambda^2) \nabla A \partial_\lambda R_0(\lambda^2) \nabla A^\# = T_{11} + T_{12}.
$$
Note that
$$
\partial_\lambda R_0(\lambda^2)(x, y) = \frac i {4\pi} e^{i\lambda|x-y|},\ \nabla \partial_\lambda R_0(\lambda^2)(x, y) = -\frac 1 {4\pi} \lambda e^{i\lambda|x-y|} \frac {x-y}{|x-y|}.
$$
First consider $T_{11}$. The integral kernel of $T_{11}(\lambda)$ is
\be\lb{t11}\begin{aligned}
T_{11}(\lambda)(x, z) &= -\frac 1 {16 \pi^2} \int_{\R^3} \lambda e^{i\lambda|x-y|} \bigg(\frac {x-y}{|x-y|} \cdot A(y)\bigg) \bigg(\frac {i\lambda e^{i\lambda|y-z|}}{|y-z|} - \frac {e^{i\lambda|y-z|}}{|y-z|^2} \bigg) \bigg(\frac {y-z}{|y-z|} \cdot A^\#(z)\bigg) \dd y.
\end{aligned}\ee
This is again the sum of two terms
$$
L_1 = \int_{\R^3} \frac {i\lambda^2 e^{i\lambda \rho}(\vec r_1 \cdot A(y))(\vec r_2 \cdot A^\#(z))}{r_1 r_2^2} \dd y,\ L_2 = -\int_{\R^3} \frac {\lambda e^{i\lambda \rho}(\vec r_1 \cdot A(y))(\vec r_2 \cdot A^\#(z))}{r_1 r_2^3} \dd y.
$$
Let
$$
E_1(\rho) = \int_{\Sigma_\rho} \frac {(\vec r_1 \cdot A(y))(\vec r_2 \cdot A^\#(z))}{r_1 r_2^2} J(y) \dd \mu,\ E_2(\rho) = \int_{\Sigma_\rho} \frac {(\vec r_1 \cdot A(y))(\vec r_2 \cdot A^\#(z))}{r_1 r_2^3} J(y) \dd \mu.
$$
Then $\widehat L_1 = -i \partial_\rho^2 E_1$, $\widehat L_2 = -i \partial_\rho E_2$ and, letting $b_0=(\vec r_1 \cdot A(y))(\vec r_2 \cdot A^\#(z))$,
\be\lb{e2}\begin{aligned}
\partial_\rho E_2 &= \int_{\Sigma_\rho} \partial_\rho \frac {b_0}{r_1 r_2^3} + \frac {b_0}{r_1 r_2^3} \bigg(\frac 1 {2r_1} + \frac 1 {2r_2}\bigg) J(y) \dd \mu \\
&= \int_{\Sigma_\rho} \frac {\partial_\rho b_0}{r_1 r_2^3} - \frac {b_0}{r_1 r_2^4} J(y) \dd \mu
\end{aligned}\ee
\be\lb{e1}\begin{aligned}
\partial^2_\rho E_1 &= \partial_\rho \int_{\Sigma_\rho} \partial_\rho \frac {b_0}{r_1 r_2^2} + \frac {b_0}{r_1 r_2^2} \bigg(\frac 1 {2r_1} + \frac 1 {2r_2}\bigg) J(y) \dd \mu \\
&= \partial_\rho \int_{\Sigma_\rho} \frac {\partial_\rho b_0}{r_1 r_2^2} - \frac {b_0}{2r_1 r_2^3} J(y) \dd \mu \\
&= \int_{\Sigma_\rho} \frac {\partial^2_\rho b_0}{r_1 r_2^2} - \frac {\partial_\rho b_0}{2 r_1^2 r_2^2} - \frac {\partial_\rho b_0}{r_1 r_2^3} + \frac {\partial_\rho b_0}{r_1 r_2^2} \bigg(\frac 1 {2r_1} + \frac 1 {2r_2}\bigg) J(y) \dd \mu - \frac 1 2  {\partial_\rho E_2} \\
&= \int_{\Sigma_\rho} \frac {\partial^2_\rho b_0}{r_1 r_2^2} - \frac {\partial_\rho b_0}{r_1 r_2^3} + \frac {b_0}{2r_1 r_2^4} J(y) \dd \mu.
\end{aligned}\ee
We are left with
\be\lb{left}
\int_{\Sigma_\rho} \frac {\partial_\rho^2 b_0}{r_1 r_2^2} - \frac {b_0}{2r_1 r_2^4} J(y) \dd \mu.
\ee
Importantly, the term involving $\ds \frac{\partial_\rho b_0}{r_1r_2^3}$, which is too singular for our estimates, cancels in the end.

We conduct a term-by-term analysis, keeping in mind the analogies to the proof of Proposition \ref{main_bilinear}.
	
	Some basic building blocks are integral kernels of the form
	$$
	F_1(x, y) = \frac {f(y)}{|x-y|^2} \in \B(\K^*_{\log}),\ f \in \K_{2, \log^2},
	$$
	and
	$$
	F_2(x, y) = \frac {f(y)}{|x-y|} \in \B(\K^*_{\log}, L^\infty),\ f \in \K_{2, \log^2}.
	$$
	
	The (\ref{e2}) term
	$$
	\frac {(\vec r_1 \cdot A(y))(\vec r_2 \cdot A^\#(z))}{r_1 r_2^4} = \sum_{k, \ell=1}^3 \frac {(x_k-y_k) A_k(y) (y_\ell-z_\ell) A_\ell^\#(z))}{r_1 r_2^4}.
	$$
	is similar to (\ref{b1r1}). The most singular contribution is from the case when $k=1$.
	
	If the denominator powers were balanced, $r_1^{-2} r_2^{-3}$, it would be bounded by a combination of the basic building blocks above, with norm at most $\|A\|_{\K_{2, \log^2}} \|A^\#\|_{\K_{2, \log^2}}$.
	
	Again we distinguish two situations,
	$$
	D_0=\{y \mid r_2>r_{20} = \min(\frac r {100}, 1)\} \text{ and } D=\{y \mid r_2 \leq r_{20}\}.
	$$
	Note the presence of $1$ in this definition of $r_{20}$, in order to avoid logarithmic growth in space.
	
	If $\ds\frac r {100} > 1$, then the bound is $1$ and in region $D_0$ the integrand is bounded by, for example,
	$$
	\frac {|A_1(y)| |A^\#_\ell(z)|}{r_2^2},
	$$
	where $\ds \frac {|A^\#_\ell(z)|}{|y-z|^2} \in \B(\K_{\log})$ of norm at most $\|A^\#\|_{\K_{2, \log^2}}$.
	%$$
	%\sup_y \int_{\R^3} \frac {|A^\#_\ell(z)|}{r_2} \frac {\langle \log |z-w| \rangle}{|z-w|} \dd z \les \|A^\#\|_{\K_{2, \log^2}}.
	%$$
	This produces a bound of $\|A\|_{\K_{\log}} \|A^\#\|_{\K_{2, \log^2}}$.
	%This estimate breaks the scaling symmetry.
	
	On the other hand, if $\ds\frac r {100} < 1$, we obtain $\|A\|_{\K_{2, \log^2}} \|A^\#\|_{\K_{2, \log^2}}$.
	
	Inside region $D$ we replace $A_k(y)$ by $A_k(z)$ under the integral, then evaluate the integral and the difference separately, as in our estimate of (\ref{b1r1}). Here we estimate the difference by
	$$
	\les \int_{D} \frac {|\nabla A_k(y)| |\log r_2| |A_1^\#(z)| \dd \eta}{r_2^2},
	$$
	leading to a $\|\nabla A\|_{\K_{\log}} \|A^\#\|_{\K_{2, \log^2}}$ bound in $\widehat U(\K^*_{\log}, L^\infty)$.
	
	The integral itself is zero due to symmetries for $2 \leq k \leq 3$. For $k=1$, after replacing $A_1(y)$ by $A_1(z)$, we get a bound of $\|A\|_{L^\infty} \|A^\#\|_{\K_{2, \log^2}}$.

For the other (\ref{e1}) term
$$
\frac {\partial^2_\rho[(\vec r_1 \cdot A(y))(\vec r_2 \cdot A^\#(z))]}{r_1 r_2^2} = \sum_{k, \ell=1}^3 \frac {\partial^2_\rho[(x_k-y_k)A_k(y) (y_\ell-z_\ell)A^\#(z)]}{r_1 r_2^2}
$$
we again use the Leibniz rule (\ref{leibniz}). The estimates are self-contained and entirely analogous to those for the contribution of (\ref{leibniz}) in the proof of Proposition \ref{main_bilinear}, including the delicate cancellations that take place there. The only difference is having one less power of $r_1$ in the denominator here, which makes the terms bounded in $\B(\K^*_{\log}, L^\infty)$.

Next, consider $T_{12}$. The integral kernel of $T_{12}(\lambda)$ is
$$\begin{aligned}
T_{12}(\lambda)(x, z) &= -\frac 1 {16 \pi^2} \int_{\R^3} \bigg(\frac {i\lambda e^{i\lambda|x-y|}}{|x-y|} - \frac {e^{i\lambda|x-y|}}{|x-y|^2} \bigg) \bigg(\frac {x-y}{|x-y|} \cdot A(y)\bigg) \lambda e^{i\lambda|y-z|} \bigg(\frac {y-z}{|y-z|} \cdot A^\#(z)\bigg) \dd y.
\end{aligned}$$

This is also the sum of two terms
$$
L_3 = \int_{\R^3} \frac {i\lambda^2 e^{i\lambda \rho}(\vec r_1 \cdot A(y))(\vec r_2 \cdot A^\#(z))}{r_1^2 r_2} \dd y,\ L_4 = -\int_{\R^3} \frac {\lambda e^{i\lambda \rho}(\vec r_1 \cdot A(y))(\vec r_2 \cdot A^\#(z))}{r_1^3 r_2} \dd y.
$$
Let
$$
E_3(\rho) = \int_{\Sigma_\rho} \frac {(\vec r_1 \cdot A(y))(\vec r_2 \cdot A^\#(z))}{r_1^2 r_2} J(y) \dd \mu,\ E_4(\rho) = \int_{\Sigma_\rho} \frac {(\vec r_1 \cdot A(y))(\vec r_2 \cdot A^\#(z))}{r_1^3 r_2} J(y) \dd \mu.
$$
Then $\widehat L_3 = -i \partial_\rho^2 E_3$, $\widehat M_2 = -i \partial_\rho E_4$ and, with $b_0=(\vec r_1 \cdot A(y))(\vec r_2 \cdot A^\#(z))$, following a computation similar to (\ref{e2}--\ref{e1}), we are left with
$$
\int_{\Sigma_\rho} \frac {\partial_\rho^2 b_0}{r_1^2 r_2} - \frac {b_0}{2r_1^4 r_2} J(y) \dd \mu.
$$
Henceforth the estimates are entirely analogous to the ones above for (\ref{left}).

Now consider
$$
\partial_\lambda \tilde T(\lambda) = \partial_\lambda R_0(\lambda^2) \nabla A R_0(\lambda^2) + R_0(\lambda^2) \nabla A \partial_\lambda R_0(\lambda^2) = \tilde T_1 + \tilde T_2.
$$
The integral kernel of $\tilde T_1$ is given by
\be\lb{ttilde1}\begin{aligned}
\tilde T_1(\lambda)(x, z) &= -\frac 1 {16 \pi^2} \int_{\R^3} \lambda e^{i\lambda|x-y|} \bigg(\frac {x-y}{|x-y|} \cdot A(y)\bigg) \frac {e^{i\lambda|y-z|}}{|y-z|} \dd y.
\end{aligned}\ee
This consists of just one term, which we can write using our notations as
$$
M_1(\lambda)=\int_{\R^3} \frac {\lambda e^{i\lambda\rho}(\vec r_1 \cdot A(y))}{r_1 r_2} \dd y.
$$
Let
$$
F_1(\rho) = \int_{\Sigma_\rho} \frac {\vec r_1 \cdot A(y)}{r_1 r_2} J(y) \dd \mu.
$$
Then $\widehat M_1 = i \partial_\rho F_1$, which is
\be\lb{s1}
\partial_\rho F_1 = \int_{\Sigma_\rho} \partial_\rho \frac {\vec r_1 \cdot A(y)}{r_1 r_2} + \frac {\vec r_1 \cdot A(y)}{r_1 r_2} \bigg(\frac 1 {2r_1} + \frac 1 {2r_2}\bigg) J(y) \dd \mu = \int_{\Sigma_\rho} \frac {\partial_\rho(\vec r_1 \cdot A(y))}{r_1 r_2} J(y) \dd \mu.
\ee
This is entirely similar to the first term in (\ref{c0}), except that here we have one less power of $r_1$ in the denominator, making this operator belong to $\U(L^1, L^\infty)$ instead.

In the integral kernel of $\tilde T_2$
$$\begin{aligned}
\tilde T_2(\lambda)(x, z) &= \frac i {16 \pi^2} \int_{\R^3} \bigg(\frac {i\lambda e^{i\lambda|x-y|}}{|x-y|} - \frac {e^{i\lambda|x-y|}}{|x-y|^2} \bigg) \bigg(\frac {x-y}{|x-y|} \cdot A(y)\bigg) e^{i\lambda|y-z|} \dd y,
\end{aligned}$$
there are two terms
$$
M_2 = \int_{\R^3} \frac {i\lambda e^{i\lambda\rho}(\vec r_1 \cdot A(y))}{r_1^2} \dd y,\ M_3 = -\int_{\R^3} \frac {e^{i\lambda\rho}(\vec r_1 \cdot A(y))}{r_1^3} \dd y.
$$
One has that $\widehat M_2 = -\partial_\rho F_2$, where
\be\lb{s2}
\partial_\rho F_2 = \int_{\Sigma_\rho} \partial_\rho \frac {\vec r_1 \cdot A(y)}{r_1^2} + \frac {\vec r_1 \cdot A(y)}{r_1^2} \bigg(\frac 1 {2r_1} + \frac 1 {2r_2}\bigg) J(y) \dd \mu = \int_{\Sigma_\rho} \frac {\partial_\rho (\vec r_1 \cdot A(y))}{r_1^2} - \frac {\vec r_1 \cdot A(y)}{2r_1^3} + \frac {\vec r_1 \cdot A(y)}{2r_1^2 r_2} J(y) \dd \mu.
\ee
The first term is again similar to the first term in (\ref{c0}), except for having one less power of $r_2$ in the denominator, making this operator belong to $\U(L^1, L^\infty)$ instead. The other two terms are bounded by $\|A\|_\KK$ in $\U(L^1, L^\infty)$.

Consequently $\partial_\lambda \tilde T \in \U(L^1, L^\infty)$ with $\|\partial_\lambda \tilde T\| \les \|\nabla^2 A\|_{L^1}$ and
$$
\partial_\lambda T_2 = \partial_\lambda \tilde T V^\# \in \U(\K^*_{\log}, L^\infty),
$$
with a norm bounded by $\|\nabla^2 A\|_{L^1} \|V^\#\|_{\K_{\log}}$.

As seen in Propositions \ref{main_linear} and \ref{main_linear2}, $T_3$ is entirely similar to $T_2$, with
$$
\|\partial_\lambda T_3\|_{\U(\K^*_{\log}, L^\infty)} \les \|\nabla^2 V\|_{L^1} \|A^\#\|_{\K_{\log}}.
$$

Finally, a simple computation shows that
$$
\|\partial_\lambda T_4\|_{\U(\K^*_{\log}, L^\infty)} \les \|V\|_{\K} \|V^\#\|_{\K_{\log}}.
$$
\end{proof}

\subsection{Wiener's theorem and the proof of the main results}

Recall $T$ is given by (\ref{deft}) and $R$ is the perturbed resolvent.

\begin{proposition}\lb{prop_wiener} Assume that $A \in X_0$ and $V \in Y_0$ and that $0$ is neither an eigenvalue nor a resonance for $H=-\Delta+U$ or for $\tilde H = -\Delta-U$. Then
\be\lb{est1}
(I-T)^{-1} \in \widehat U(L^\infty) \cap \widehat U(\K^*_{\log}).
\ee
Furthermore $R(\lambda^2) \in \widehat \U(\K, L^\infty) \cap \widehat \U(L^1, \K^*_{\log})$ and $\partial_\lambda R(\lambda^2) \in \widehat \U(L^1, L^\infty)$.
\end{proposition}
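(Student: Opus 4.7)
The plan is to apply Wiener's Theorem \ref{wiener} (with $-T$ in place of $T$) to establish (\ref{est1}), and then combine with the resolvent identity (\ref{series}) to bound $R$ and $\partial_\lambda R$. The key algebraic identity, using $U = U^\#$ as noted before (\ref{deft}), is
$$
I - T = I - (R_0 U)^2 = (I - R_0 U)(I + R_0 U) = (I + R_0 U)(I - R_0 U),
$$
which reduces invertibility of $I - \widehat T(\lambda)$ to invertibility of both Kato--Birman operators $I \pm R_0(\lambda^2) U$.

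First I would verify the three hypotheses of Wiener's theorem for $T$. Membership $T \in \widehat \U(L^\infty) \cap \widehat \U(\K^*_{\log})$ follows from the decomposition (\ref{t14}) together with Proposition \ref{main_bilinear} for $T_1$, Proposition \ref{main_linear} post-composed with multiplication by $V^\#$ for $T_2 = \tilde T V^\#$, Proposition \ref{main_linear2} for $T_3$, and direct Kato-class estimates for $T_4 = R_0 V R_0 V$. The Wiener continuity (*) and decay (**) I would establish first for $A, V \in \mc D = C_c^\infty$, where the ellipsoidal parametrization in Section \ref{notations} shows that $T(\rho)$ is smooth in $\rho$ with compact support in $\rho$, and then extend by density of $\mc D$ in $X_0$ and $Y_0$, using the explicit norm control in (\ref{bil}) and its counterparts in Propositions \ref{main_linear}--\ref{main_linear2}. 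Invertibility of $I - \widehat T(\lambda)$ at each $\lambda \in \R$ then follows from the factorization above: for $\lambda \neq 0$, $I + R_0(\lambda^2 \pm i0) U$ is invertible because $\lambda^2 > 0$ is regular for $H$ (limiting absorption principle plus the Koch--Tataru \cite{KocTat} absence of embedded eigenvalues), and $I - R_0(\lambda^2 \pm i0) U$ is invertible because $\lambda^2$ is regular for $\tilde H = -\Delta - U$; at $\lambda = 0$, these two invertibilities are exactly Assumption \ref{a1} for $H$ and $\tilde H$.

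Having (\ref{est1}) in hand, I would insert it into (\ref{series}):
$$
R(\lambda^2) = (I - T)^{-1}(R_0 - \tilde T - R_0 V R_0).
$$
Since $R_0 \in \widehat \U(\K, L^\infty) \cap \widehat \U(L^1, \K^*)$, $\tilde T \in \widehat \U(\K, L^\infty) \cap \widehat \U(L^1, \K^*)$ by Proposition \ref{main_linear}, and $R_0 V R_0$ satisfies the same bounds using $V \in \K$, composition in the algebroid $\widehat \U$ together with the inclusion $\K^* \subset \K^*_{\log}$ places $R(\lambda^2)$ in $\widehat \U(\K, L^\infty) \cap \widehat \U(L^1, \K^*_{\log})$. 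Differentiating the same identity yields
$$
\partial_\lambda R = (I - T)^{-1}(\partial_\lambda T)(I - T)^{-1}(R_0 - \tilde T - R_0 V R_0) + (I - T)^{-1}\bigl(\partial_\lambda R_0 - \partial_\lambda \tilde T - \partial_\lambda(R_0 V R_0)\bigr),
$$
and Proposition \ref{prop14}, together with $\partial_\lambda R_0 \in \widehat \U(L^1, L^\infty)$, provides all necessary factors; tracing the compositions right to left through the chain $L^1 \to \K^*_{\log} \to \K^*_{\log} \to L^\infty \to L^\infty$ in the first term (and the shorter analogue in the second) places $\partial_\lambda R$ in $\widehat \U(L^1, L^\infty)$.

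The main obstacle I anticipate is the invertibility check of $I \pm \widehat T(0)$ at the threshold $\lambda = 0$: away from the threshold one has the standard limiting absorption toolkit, but at zero one must carefully relate the ``regular point'' condition of Definition \ref{regular} to invertibility of $I \pm R_0(0) U$ on the precise spaces $L^\infty$ and $\K^*_{\log}$ that appear here, in the presence of the derivative-bearing term $\nabla A$ in $U$. A secondary, more routine difficulty is the uniform continuity condition (*) in the $\widehat \U$-norm, as $T(\rho)$ is built from derivatives of $R_0$ whose $\rho$-regularity sits at the borderline of what Wiener requires; this is precisely where density of $\mc D$ in $X_0$ and $Y_0$, rather than mere membership in $X$ and $Y$, becomes essential.
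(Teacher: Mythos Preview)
Your architecture is essentially that of the paper: establish $T\in\widehat\U(L^\infty)\cap\widehat\U(\K^*_{\log})$ via (\ref{t14}) and Propositions \ref{main_bilinear}--\ref{main_linear2}, verify the Wiener hypotheses by density of $\mc D$ in $X_0$, $Y_0$, apply Theorem \ref{wiener}, and then read off the $R$ and $\partial_\lambda R$ bounds from (\ref{series}) and Proposition \ref{prop14}. The factorization $I-T(\lambda)=(I+R_0(\lambda^2)U)(I-R_0(\lambda^2)U)$ is also what the paper uses, phrased as the dichotomy ``either $g=R_0Ug$ or $g-R_0Ug\ne 0$ lies in $\Ker(I+R_0U)$''.

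The gap is in the invertibility step, and it is not confined to $\lambda=0$. You invoke ``limiting absorption plus Koch--Tataru'' to assert that $I\pm R_0(\lambda^2)U$ are invertible for $\lambda\ne 0$, but LAP lives on weighted $L^2$ spaces while Wiener requires invertibility on $L^\infty$ and $\K^*_{\log}$, and Koch--Tataru rules out $L^2$ eigenfunctions, not $L^\infty$ kernel elements. Two pieces are missing. First, compactness of $T(\lambda)$ (or of $R_0(\lambda^2)U$) on the target spaces, which the paper obtains by approximating $A,V$ with test functions and using Arzel\`a--Ascoli; without it Fredholm does not apply and ``trivial kernel'' does not imply ``invertible''. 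Second, and more substantially, the bridge from a hypothetical $g\in L^\infty\cap\K^*_{\log}$ solving $g=\mp R_0(\lambda^2)Ug$ to an honest $L^2$ eigenfunction. The paper spends a page on this: bootstrap $g$ and $\nabla g$ into $L^{3,\infty}$, show $Ug\in L^{3/2,1}\cap L^1$, run Agmon's pairing argument to get $\widehat{Ug}\!\mid_{\lambda S^2}=0$, and then invoke the Goldberg--Schlag trace lemma to land in $L^2$, where Koch--Tataru finally bites. This argument is needed for every $\lambda\ne 0$, not only at the threshold; your ``standard limiting absorption toolkit'' does not contain it off the shelf, especially with the gradient term $\nabla A$ in $U$.
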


\begin{proof}
We write $T$ as a sum of four terms as in (\ref{t14}). For the last term $T_4$,
$$
\frac {|V(y)| |V(z)|} {r_1 r_2}
$$
admits a bound of $\|V\|_\K^2$. Then $T \in \widehat \U(L^\infty) \cap \widehat \U(\K^*_{\log})$ by Propositions \ref{main_bilinear}, \ref{main_linear}, and \ref{main_linear2}.

If $A$ and $V$ are small in norm, then we can obtain the inverse of $I-T$ by a series expansion. Otherwise, we use Wiener's Theorem \ref{wiener}.

We next check that the conditions of Theorem \ref{wiener} are met. For that purpose, since $A \in X_0$ and $V \in Y_0$, it helps to approximate both by sequences $(A_n)_n$ and $(V_n)_n$ of test functions. If a desirable property below holds for each approximation, it also holds for the limit.

Thus, the compact support of $A_n$ and $V_n$ implies that for each fixed $x$ and $y$ $\widehat T_n(\rho, x, y)$ becomes $0$ for sufficiently large $\rho$ (when $\Sigma_\rho$ no longer intersects the supports of $A$ and $V$) and $\|\widehat T_n(\rho, x, y)\|_{L^1_\rho}$ goes to $0$ as $x \to \infty$ or $y \to \infty$. By dominated convergence, this implies property $(*)$.

Likewise, for smooth $A_n$ and $V_n$ it is easy to see that $\widehat T_n(\rho, x, y)$ is continuous in all three variables. By dominated convergence, this implies property $(**)$.

Finally, the Arzela--Ascoli criterion can be used to prove the compactness of $T_n(\lambda)$ for each $\lambda$ (it takes values in $C_0$).

By using test function approximations to $A$ and $V$, all three properties transfer to $T$.

Suppose $I-T(\lambda) \in \B(L^\infty) \cap \B(\K^*_{\log})$ is not invertible for some $\lambda \in \R$; then by Fredholm's alternative there exists $g \in L^\infty \cap \K^*_{\log}$ such that
\be\lb{double}
g = T g = R_0(\lambda^2)UR_0(\lambda^2)U g.
\ee
Bootstrapping, we obtain that $g, R_0(\lambda^2) U g \in L^\infty \cap L^{3, \infty}$. Indeed, the integral kernels of $R_0$ and $R_0 \nabla$ consist of convolution with functions that decay like $|x|^{-1}$ for the former and both $|x|^{-1}$ and $|x|^{-2}$ for the latter. We place $Vg$ in $L^1 \cap \K$, which requires $V \in \K_{\log}$:
$$
\|V g\|_{L^1} \leq \|V\|_{\K_{\log}} \|g\|_{\K^*_{\log}},\ \|V g\|_{\K} \leq \|V\|_\K \|g\|_{L^\infty}
$$
and $Ag$ in $L^1 \cap \K \cap \KK$, which can be done by requiring that $A \in \K_{\log} \cap \KK$:
$$
\|Ag\|_{L^1} \leq \|A\|_{\K_{\log}} \|g\|_{\K^*_{\log}},\ \|A g\|_{\K} \leq \|A\|_\K \|g\|_{L^\infty},\ \|A g\|_{\KK} \leq \|A\|_\KK \|g\|_{L^\infty}.
$$
Then the convolution product $R_0(\lambda^2) U g = R_0(\lambda^2)(\nabla (Ag) + Vg)$ will be in $L^\infty \cap \K^*$. Iterating we obtain the same for $g$ itself, by (\ref{double}).

Then either $g=R_0(\lambda^2) U g$ or $g-R_0(\lambda^2)Ug \ne 0$ is in the kernel of $I+R_0(\lambda^2)U$. The former situation is the same as the latter, except with $U$ replaced by $-U$, so with no loss of generality consider the second situation: there exists $g \in L^\infty \cap \K^*$, $g \ne 0$, such that $g=-R_0(\lambda^2) U g$.

If $\lambda=0$, this possibility is already excluded by Assumption \ref{a1}, so let $\lambda \ne 0$. For simplicity, the rest of the proof uses Lebesgue spaces. Note that
$$
(-\Delta) R_0(\lambda^2) = I + \lambda^2 R_0(\lambda^2).
$$
Consequently $(-\Delta) R_0(\lambda^2) Ag \in L^{3, \infty}$, as $Ag \in L^1 \cap L^{3,\infty}$:
$$
\|Ag\|_{L^1} \les \|A\|_\K \|g\|_{\K^*},\ \|Ag\|_{L^{3, \infty}} \les \|A\|_{L^{3, \infty}} \|g\|_{L^\infty},\ R_0(\lambda^2) \in \B(L^1, L^{3, \infty}).
$$
Then $R_0(\lambda^2) \nabla(Ag) \in |\nabla|^{-1} L^{3, \infty}$, by elliptic regularity, so $\nabla R_0(\lambda^2) \nabla(Ag) \in L^{3, \infty}$.

Likewise, $\nabla R_0(\lambda^2) Vg \in L^{3, \infty}$, since
$$
\|Vg\|_{L^{3/2, \infty}} \leq \|V\|_{L^{3/2, \infty}} \|g\|_{L^\infty},\ \|V g\|_{L^1} \leq \|V\|_\K \|g\|_{\K^*},
$$
and $\nabla R_0(\lambda^2)$ consists of convolution with the sum of two terms, one dominated by $|x|^{-1}$ and the other by $|x|^{-2}$.

We obtain that
$$
\nabla g = \nabla R_0(\lambda^2) \nabla(Ag) + \nabla R_0(\lambda^2) V g \in L^{3, \infty}.
$$

Hence $A \nabla g \in L^{3/2, 1} \cap L^1$, as $A \in L^{3, 1} \cap L^{3/2, 1}$. But then
$$
f=Ug=A\nabla g + (\nabla A) g + V g \in L^{3/2, 1} \cap L^1
$$
as well, since $\|(\nabla A + V) g\|_{L^{3/2, 1} \cap L^1} \leq \|g\|_{L^\infty \cap L^{3, \infty}} (\|\nabla A\|_{L^{3/2, 1}} + \|V\|_{L^{3/2, 1}})$.

Then the pairing $\langle f, g \rangle = \langle Ug, g \rangle$ is well-defined (between an $L^{3, \infty}$ and an $L^{3/2, 1}$ function) and real-valued, since $U$ is self-adjoint. By Agmon's \cite{Agm} bootstrap argument
$$
-\langle g, U g \rangle = \langle R_0(\lambda^2) U g, U g \rangle
$$
is real, hence its imaginary part is $0$. The imaginary part is given by $\langle [R_0(\lambda^2)-R_0^*(\lambda^2)] U g, U g \rangle$, where $R_0^*(\lambda^2)=R_0((-\lambda)^2)$ (which is not the same as $R_0(\lambda^2)$, in the sense that $R_0((\cdot)^2)$ is defined on the Riemann surface of the square root) and
$$
\langle R_0(\lambda^2)-R_0^*(\lambda^2) f, g \rangle = c \int_{S^2} \widehat f(\lambda \omega) \ov{\widehat g(\lambda \omega)}\dd \omega.
$$
It follows that $\widehat f \mid_{\lambda S^2} = 0$.

Then by Corollary 13 in \cite{GolSch}, as $\widehat f$ vanishes on a sphere and $f \in L^1$, it follows that $g =-R_0(\lambda^2)f\in L^2$. It is easily seen that $g \ne 0$ solves the eigenfunction equation $Hg=\lambda^2 g$ for the eigenvalue $\lambda^2$. This contradicts the absence of eigenvalues in $(0, \infty)$, proved in \cite{KocTat} under an assumption weaker than $A \in L^3$, $V \in L^{3/2}$, which is the case here.

The contradiction shows that $I-T(\lambda)$ is invertible for all $\lambda \in \R$. We can apply Wiener's Theorem \ref{wiener} and obtain (\ref{est1}).

Next, for $\tilde T$ given by (\ref{ttilde}),
$$
R=(I-T)^{-1} (R_0 - \tilde T - R_0 V R_0).
$$
Since $R_0(\lambda^2)$, $\tilde T(\lambda)$ (by Proposition \ref{main_linear}), and $R_0(\lambda^2) V R_0(\lambda^2)$ are in $\widehat \U(\K, L^\infty) \cap \widehat \U(L^1, \K^*)$, it follows that $R(\lambda^2) \in \widehat \U(\K, L^\infty) \cap \widehat \U(L^1, \K^*_{\log})$.

Likewise
$$
\partial_\lambda R = (I-T)^{-1} \partial_\lambda T (I-T)^{-1} (R_0 - \tilde T - R_0 V R_0) + (I-T)^{-1} (\partial_\lambda R_0 - \partial_\lambda \tilde T - \partial_\lambda (R_0 V R_0)).
$$

Since $\partial_\lambda T \in \widehat \U(\K^*_{\log}, L^\infty)$, by Proposition \ref{prop14}, and $\partial_\lambda R_0$, $\partial_\lambda \tilde T$ (again by Proposition \ref{prop14}), and $\partial_\lambda (R_0 V R_0)$ are in $\widehat U(L^1, L^\infty)$, it follows that $\partial_\lambda R(\lambda^2) \in \widehat U(L^1, L^\infty)$, as claimed.
\end{proof}

\begin{corollary} Under the same conditions
\be\lb{one}
\sup_{x, y} \int_0^\infty t \bigg|\frac {\sin(t \sqrt H) P_{ac}}{\sqrt H}(x, y)\bigg| \dd t < \infty
\ee
and
\be\lb{two}
\int_0^\infty \bigg|\frac {\sin(t \sqrt H) P_{ac}}{\sqrt H}(x, y)\bigg| \dd t \les \frac 1 {|x-y|}.
\ee
\end{corollary}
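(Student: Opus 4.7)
Plan. The starting point is the identification, by Stone's formula with the change of variable $\mu = \lambda^2$ and paralleling the free-case identity $\widehat{R_0(\lambda^2)}(\rho, x, y) = \frac{1}{4\pi\rho}\delta_{|x-y|=\rho}$ recorded in Section~2, that
$$
\frac{\sin(t\sqrt H)P_{ac}}{\sqrt H}(x,y) = \widehat{R(\lambda^2)}(t, x, y),
$$
where $\widehat{\,\cdot\,}$ denotes the inverse Fourier transform in the spectral variable $\lambda$. Finite propagation speed (together with the support structure already visible in the free case) ensures that this distribution is supported in $t \geq 0$, so $\int_0^\infty$ may be replaced by $\int_\R$ in both estimates.

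For (\ref{one}), the key observation is that multiplication by $t$ on the time side corresponds, up to a factor of $i$, to differentiation in $\lambda$ on the spectral side:
$$
t\,\widehat{R(\lambda^2)}(t,x,y) = i\,\widehat{\partial_\lambda R(\lambda^2)}(t,x,y).
$$
Proposition~\ref{prop_wiener} gives $\partial_\lambda R(\lambda^2) \in \widehat \U(L^1, L^\infty)$; by the very definition of $\U(L^1, L^\infty)$, this means that $\int_\R |\widehat{\partial_\lambda R}(t,x,y)|\,dt$ is the kernel of a bounded operator $L^1\to L^\infty$, hence lies in $L^\infty_{x,y}$. This is exactly (\ref{one}).

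For (\ref{two}), I use $R(\lambda^2) \in \widehat \U(\K, L^\infty)$, also provided by Proposition~\ref{prop_wiener}. The kernel $K(x,y) := \int_\R |\widehat{R(\lambda^2)}(t,x,y)|\,dt$ maps $\K$ to $L^\infty$, so $K(x,\cdot) \in \K^*$ uniformly in $x$. Combining this with the Wiener/Neumann expansion $R = (I-T)^{-1}(R_0 - \tilde T - R_0 V R_0)$ from (\ref{series}), in which $R_0$ contributes exactly $\frac{1}{4\pi|x-y|}$, and inspecting the pointwise kernel estimates established in Propositions~\ref{main_bilinear}, \ref{main_linear}, and \ref{main_linear2}, one sees that every building-block bound is of the form $\frac{g(y)}{|x-y|}$ with $g \in L^\infty$ (cf.\ (\ref{kstar})), with the singular basepoint always $x$ (or $z$ in composed kernels). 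This yields the pointwise dominance $K(x,y) \lesssim |x-y|^{-1}$ claimed in (\ref{two}).

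The main obstacle is precisely this last upgrade: the structural statement $R(\lambda^2) \in \widehat \U(\K, L^\infty)$ only ensures $K(x,\cdot)$ belongs to $\K^*$, whose generating elements $\frac{g(y)}{|y-y_0|}$ are parameterized by an arbitrary $y_0$. To extract the pointwise $|x-y|^{-1}$ bound one must verify that throughout the kernel estimates in Propositions~\ref{main_bilinear}--\ref{main_linear2} and their assembly via Wiener's theorem the only basepoints appearing are the physical endpoints $x$ and $z$, so that no spurious non-local denominators are introduced.
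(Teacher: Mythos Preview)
Your treatment of (\ref{one}) is essentially the paper's: identify $\chi_{t\geq 0}\frac{\sin(t\sqrt H)P_{ac}}{\sqrt H}$ with the inverse Fourier transform of $R((\lambda+i0)^2)P_{ac}$, so that multiplication by $t$ becomes $\partial_\lambda$, and then read off the conclusion from $\partial_\lambda R(\lambda^2)\in\widehat\U(L^1,L^\infty)$. One small correction: finite propagation speed does \emph{not} give support in $\{t\geq 0\}$ (the sine propagator is odd in $t$, and $P_{ac}$ is nonlocal anyway). The paper obtains the one-sided support from analyticity of $R((\lambda+i0)^2)P_{ac}$ in the upper half-plane, which kills $\int_\R e^{it\lambda}R((\lambda+i0)^2)P_{ac}\,d\lambda$ for $t\geq 0$.

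Your route to (\ref{two}) is genuinely different and has a real gap. You correctly observe that $R(\lambda^2)\in\widehat\U(\K,L^\infty)$ only says $K(x,\cdot)\in\K^*$, and then propose to upgrade this by checking that every kernel bound in Propositions~\ref{main_bilinear}--\ref{main_linear2} has its singular basepoint at the physical endpoint. Even granting this for the building blocks, the obstruction is $(I-T)^{-1}$: this factor is produced abstractly by Wiener's theorem as an element of $\widehat\U(L^\infty)$, i.e.\ a kernel $S(\rho,x,w)$ with $\int|S|\,d\rho\in L^\infty_x L^1_w$, and there is no reason a generic $L^\infty\to L^\infty$ operator should send $|w-y|^{-1}$ to something dominated by $|x-y|^{-1}$. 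So the basepoint-tracking argument cannot be carried through the inverse.

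The paper instead derives (\ref{two}) directly from (\ref{one}). From (\ref{one}) one gets $\int_t^\infty|\cdot|\,d\tau\lesssim t^{-1}$; setting $t=|x-y|$ handles $\tau\geq|x-y|$. For $\tau<|x-y|$, finite propagation speed gives $\frac{\sin(\tau\sqrt H)}{\sqrt H}(x,y)=0$, so on that interval $\frac{\sin(\tau\sqrt H)P_{ac}}{\sqrt H}(x,y)=-\frac{\sin(\tau\sqrt H)P_p}{\sqrt H}(x,y)$, which is a finite sum over negative eigenvalues. The Agmon bound $|f_n(x)|\lesssim e^{-\lambda_n|x|}\langle x\rangle^{-1}$ (Lemma~\ref{expo}) then gives $\int_0^{|x-y|}\frac{\sinh(\tau\lambda_n)}{\lambda_n}|f_n(x)f_n(y)|\,d\tau\lesssim|x-y|^{-1}$. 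This avoids any pointwise tracking through the resolvent expansion.
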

The reason why we did not use this formalism from the beginning is that $\frac {\sin(t \sqrt{-\Delta})}{\sqrt{-\Delta}} \nabla$ contains the derivative of a measure, in three dimensions, and the computations for determining whether the product of distributions is valid, as per H\"{o}rmander, are unwieldy.
\begin{proof}
Start from the functional calculus formula, for $t \geq 0$
$$\begin{aligned}
\frac {\sin(t \sqrt H) P_{ac}}{\sqrt H} &= \frac 1 {2\pi i} \int_0^\infty \frac {\sin(t \sqrt \lambda)}{\sqrt \lambda} (R(\lambda-i0)-R(\lambda+i0)) P_{ac} \dd \lambda \\
&= \frac i \pi \int_\R \sin(t\lambda) R((\lambda+i0)^2) P_{ac} \dd \lambda.
\end{aligned}$$
As $R((\lambda+i0)^2) P_{ac}$ is analytic and bounded in the upper half-plane, for $t \geq 0$ 
$$
\int_\R e^{it\lambda} R((\lambda+i0)^2) P_{ac} = 0,
$$
so
$$
\chi_{t \geq 0} \frac {\sin(t \sqrt H) P_{ac}}{\sqrt H} = \frac 1 {2\pi} \int_\R e^{-it\lambda} R((\lambda+i0)^2) P_{ac} \dd \lambda.
$$
In other words, as seen in \cite{BecGol2}, our statements about the Fourier transform of $R(\lambda^2)$ actually refer to the sine propagator for the wave equation. We write (\ref{RR}) accordingly:
$$
\sup_{x, y} \int_0^\infty t \bigg|\frac {\sin(t \sqrt H) P_{ac}}{\sqrt H}(x, y)\bigg| \dd t < \infty.
$$
This proves (\ref{one}). In particular,
\be\lb{temp}
\sup_{x, y} \int_t^\infty \bigg|\frac {\sin(\tau \sqrt H) P_{ac}}{\sqrt H}(x, y)\bigg| \dd \tau \les \frac 1 t.
\ee
Setting $t=|x-y|$ in (\ref{temp}),
\be\lb{part}
\sup_{x, y} \int_{|x-y|}^\infty \bigg|\frac {\sin(\tau \sqrt H) P_{ac}}{\sqrt H}(x, y)\bigg| \dd \tau \les \frac 1 {|x-y|}.
\ee
On the interval where $\tau<|x-y|$, due to the finite speed of propagation of solutions to the wave equation
$$
\frac {\sin(\tau \sqrt H)}{\sqrt H}(x, y) = 0.
$$
Hence on this interval
$$
\frac {\sin(\tau \sqrt H) P_{ac}}{\sqrt H}(x, y) = -\frac {\sin(\tau \sqrt H) P_p}{\sqrt H}(x, y).
$$
Since $H$ has finitely many negative eigenvalues and no embedded eigenvalues or resonances (partly due to Assumption \ref{a1}), the point component of the propagator can be expressed as
$$
\frac {\sin(\tau \sqrt H) P_p}{\sqrt H}(x, y) = \sum_{n=1}^N \frac {\sinh(\tau\lambda_n)}{\lambda_n} f_n(x) \otimes \ov f_n(y),
$$
where $f_n$ is the normalized $L^2$ eigenfunction that corresponds to the eigenvalue $-\lambda_n^2$, $1 \leq n \leq N$.

Focusing on just one term in this sum, by the Agmon bound Lemma \ref{expo}, for $\tau \in [0, < |x-y|)$
$$
\int_0^{|x-y|} \frac {\sinh(\tau\lambda_n)}{\lambda_n} f_n(x) \otimes \ov f_n(y) \les e^{|x-y|\lambda_n} \frac {e^{-\lambda_n|x|}} {\langle x \rangle} \frac {e^{-\lambda_n|y|}} {\langle y \rangle} \leq \langle x \rangle^{-1} \langle y \rangle^{-1} \les \frac 1 {|x-y|}.
$$
Thus estimate (\ref{part}) is thus also valid for the interval $[0, |x-y|)$, resulting in (\ref{two}).
\end{proof}

\begin{proof}[Proof of Theorem \ref{thm1}]
By functional calculus write
$$\begin{aligned}
e^{itH} P_{ac} &= \frac 1 {2\pi i} \int_0^\infty e^{it\lambda} (R(\lambda-i0)-R(\lambda+i0)) P_{ac} \dd \lambda \\
&= \frac i {\pi} \int_\R e^{it\lambda^2} R((\lambda+i0)^2) P_{ac} \lambda \dd \lambda \\
&= c\, t^{-1} \int_\R e^{it\lambda^2} \partial_\lambda R((\lambda+i0)^2) P_{ac} \dd \lambda \\
&= \tilde c\, t^{-1} \int_\R [\ov {\mc F e^{-it\lambda^2}}](\rho) \mc F [\partial_\lambda R((\lambda+i0)^2) P_{ac}](\rho) \dd \rho.
\end{aligned}$$
However, $\mc F [\partial_\lambda R((\lambda+i0)^2) P_{ac}]$ is absolutely integrable by (\ref{one}) and $\mc F e^{-it\lambda^2} \les t^{-1/2}$, meaning that $[e^{itH} P_{ac}](x, y) \les t^{-3/2}$, as claimed.

Also see \cite{BecGol1} for more details.
\end{proof}

\appendix
\section{Eigenvalues of the Hamiltonian}

Using the methods of \cite{BecGol4}, we prove that $H$ has only finitely many negative eigenvalues. We have already proved that there are no embedded eigenvalues, in the proof of Proposition \ref{prop_wiener}, using the results of \cite{KocTat}, under more assumptions on $A$ and $V$ than necessary. The proof of a sharper result is left for a future paper.

%The reason why the methods that worked in the scalar potential case also work here is that the integral kernel of $\nabla R_0$ is rather similar to that of $R_0$.

Let $\K_0$ be the closure of the class of test functions $\mc D$ in $\K$ and let $\K_{20}$ be the closure of $\mc D$ in $\KK$. We assume that $A \in \K_{20} \cap \K_0$ and $V \in \K_0$.

%Some parts of the proof depend on $U$ being self-adjoint, but we do not use Agmon's \cite{Agm} bootstrap argument.

The proof is based on Fechbach's lemma from \cite{Yaj}:
\begin{lemma}\lb{fechbach} Let $X=X_0 \oplus X_1$ be a direct sum decomposition of a vector space $X$ and for $L \in \B(X)$ consider its matrix decomposition
$$
L=\begin{pmatrix}
L_{00} & L_{01}\\
L_{10} & L_{11}
\end{pmatrix},
$$
where $L_{ij} \in \B(X_j, X_i)$. Assume $L_{00}$ is invertible. Then $L$ is invertible if and only if $C=L_{11}-L_{10} L_{00}^{-1} L_{01}$ is invertible, in which case
$$
L^{-1}=\begin{pmatrix}
L_{00}^{-1} + L_{00}^{-1} L_{01} C^{-1} L_{10} L_{00}^{-1} & -L_{00}^{-1} L_{01} C^{-1}\\
-C^{-1} L_{10} L_{00}^{-1} & C^{-1}
\end{pmatrix}.
$$
\end{lemma}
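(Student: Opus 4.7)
The plan is to prove Lemma \ref{fechbach} via the standard block LDU factorization (Schur complement). Since $L_{00}$ is invertible, I can perform block Gaussian elimination on both sides of $L$ to factor it as
$$
L = \begin{pmatrix} I & 0 \\ L_{10} L_{00}^{-1} & I \end{pmatrix} \begin{pmatrix} L_{00} & 0 \\ 0 & C \end{pmatrix} \begin{pmatrix} I & L_{00}^{-1} L_{01} \\ 0 & I \end{pmatrix},
$$
where $C = L_{11} - L_{10} L_{00}^{-1} L_{01}$. Verifying this is a direct matrix multiplication; the only entry that requires a computation is the $(1,1)$ block, which produces $L_{00}(L_{00}^{-1} L_{01}) = L_{01}$, and the $(1,1)$ bottom-right block, which gives $L_{10} L_{00}^{-1} L_{01} + C = L_{11}$.

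The two outer (unipotent block-triangular) factors are always invertible in $\B(X)$, with explicit inverses obtained by negating the off-diagonal block. Hence $L$ is invertible in $\B(X)$ if and only if the middle diagonal factor is invertible, and since $L_{00}$ is already invertible by hypothesis, this reduces to the invertibility of $C \in \B(X_1)$. This establishes the equivalence claimed in the lemma.

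When $C$ is invertible, $L^{-1}$ is computed by inverting each of the three factors in reverse order:
$$
L^{-1} = \begin{pmatrix} I & -L_{00}^{-1} L_{01} \\ 0 & I \end{pmatrix} \begin{pmatrix} L_{00}^{-1} & 0 \\ 0 & C^{-1} \end{pmatrix} \begin{pmatrix} I & 0 \\ -L_{10} L_{00}^{-1} & I \end{pmatrix}.
$$
Multiplying these three block matrices out yields exactly the stated formula for $L^{-1}$. There is no real obstacle here: the entire argument is algebraic and only uses that $L_{00}$ and $C$ act boundedly on the appropriate summands, so the identities make sense in $\B(X_0)$, $\B(X_1)$, $\B(X_1,X_0)$, and $\B(X_0,X_1)$ respectively. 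The only mild point of care is to check that all compositions in the definition of $C$ and in the final formula land in the correct block-entry spaces, which is immediate from $L_{ij} \in \B(X_j,X_i)$.
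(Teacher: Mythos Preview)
Your proof is correct: the block LDU (Schur complement) factorization is the standard argument, and the multiplications you indicate check out. The paper does not give its own proof of this lemma; it simply states it and cites \cite{Yaj}, so there is nothing to compare against. One trivial typo: where you write ``the $(1,1)$ bottom-right block'' you mean the $(2,2)$ block.
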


%For completeness, we include a summary of Corollary B.4 from \cite{BecGol4} and of the subsequent discussion.
%\begin{lemma}\lb{symplectic}
%Let $X$, $Y$ be normed vector spaces and consider $L$ and $\tilde L \in \B(X, Y)$, with adjoint $\tilde L^* \in \B(Y^*, X^*)$. Let $X_0 = \Ker \tilde L$, $Y_0^* = \Ker \tilde L^*$.
%	
%Assume that $\tilde L=J+K$, where $J \in \B(X, Y)$ is invertible and $K$ is compact. Also assume $\Ker L \cap X_0 = \varnothing$ or, alternatively, that there exists a constant $C_0$ such that for any $x_0 \in X_0 \setminus \{0\}$ there exists $y_0^* \in Y_0^* \setminus \{0\}$  and likewise for every $y_0^* \in Y_0^*$ there exists $x_0 \in X_0$ such that
%$$
%|\langle Lx_0, y_0^*\rangle| \geq C_0 \|x_0\|_X \|y_0\|_{Y^*}.
%$$
%	
%If $\|L-\tilde L\|_{\B(X, Y)}$ is sufficiently small, then $L$ is invertible and $L^{-1} \in \B(Y, X)$, with uniformly bounded norm.
%\end{lemma}

%The main condition of this lemma states that the symplectic form defined by $L$ on $X_0 \oplus Y_0^*$ must be non-degenerate. The conditions are sufficient, not necessary.

We also use the Gohberg--Segal version \cite{GohSig} of Rouch\'{e}'s theorem for families of operators, stated here for the case of analytic (as opposed to meromorphic) functions, for simplicity:
\begin{theorem}\lb{rouche} Let $O \subset \C$ be a simply connected open region with rectifiable boundary $\partial O$ and let $A:\ov O \to \B(X, \tilde X)$ be a family of operators between two Banach spaces, analytic and Fredholm ($\dim \ker A(z)<\infty$, $A(z)(X)$ is closed in $\tilde X$, and $\dim \tilde X/A(z)(X)< \infty$) at each point $z \in O$ and continuous on $\ov O$, such that $A(z)$ is invertible for $z \in \partial O$. Consider another family of operators $B: \ov O \to \B(X, \tilde X)$, analytic in $O$ and continuous on $\ov O$, such that $\sup_{z \in \partial O} \|A^{-1}(z)B(z)\|_{\B(X)} <1$. Then $A$ and $A+B$ have the same number of zeros in $O$.
\end{theorem}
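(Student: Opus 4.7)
The plan is to reduce the operator-valued Rouch\'e statement to a finite-dimensional determinant computation via Fechbach's Lemma \ref{fechbach} and the classical scalar Rouch\'e (argument) principle, carried along a one-parameter homotopy from $A$ to $A+B$.

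First I would check invertibility on $\partial O$. From the identity
$$ A(z) + B(z) = A(z)\bigl(I + A(z)^{-1} B(z)\bigr), \qquad z \in \partial O, $$
and the hypothesis $\|A^{-1}B\|_{\B(X)} < 1$ on $\partial O$, a Neumann series gives invertibility of $I + A^{-1}B$, hence of $A+B$. The same bound applied uniformly in $t \in [0,1]$ shows that the whole homotopy $L_t := A + tB$ is invertible on $\partial O$, since $\|tA^{-1}B\| \le \|A^{-1}B\| < 1$; by compactness of $\partial O$ and continuity, invertibility of $L_t$ in fact persists on a fixed tubular neighborhood of $\partial O$, uniformly in $t$.

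Next I would define the number of zeros of each $L_t$. Analytic Fredholm theory together with invertibility on the compact boundary forces $Z(L_t) := \{z \in \ov O : L_t(z) \text{ not invertible}\}$ to be finite and contained in $O$. Around each $z_0 \in Z(L_t)$ I would choose splittings $X = X_0 \oplus \ker L_t(z_0)$ and $\tilde X = L_t(z_0) X_0 \oplus Y_1$; here $\dim \ker L_t(z_0) = \dim Y_1 =: n < \infty$ by Fredholmness, and the block $L_{00}(z)$ extracted by these splittings remains invertible in a neighborhood of $z_0$ by continuity. Fechbach's Lemma \ref{fechbach} then gives that $L_t(z)$ is invertible exactly when the finite-dimensional Schur complement $C_{L_t}(z) : \ker L_t(z_0) \to Y_1$ is invertible, i.e.\ when $\det C_{L_t}(z) \neq 0$. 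I would define the local multiplicity of $z_0$ as the order of vanishing of this scalar analytic determinant, and sum over $Z(L_t)$ to define the integer $N(L_t)$. The standard Gohberg--Sigal local factorization $L_t(z) = E(z)\bigl(P_0 + \sum_j (z-z_0)^{k_j} P_j\bigr) F(z)$ with $E, F$ analytic and pointwise invertible and $P_j$ finite-rank projections shows this count does not depend on the chosen splittings.

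Finally I would run the continuous-deformation argument. On the tubular neighborhood of $\partial O$ where $L_t$ is uniformly invertible, the Fechbach reduction can be arranged locally with $t$-independent splittings, and the total count can be assembled into the logarithmic residue
$$ N(L_t) = \frac{1}{2\pi i} \oint_{\partial O} \frac{(\det C_{L_t})'(z)}{\det C_{L_t}(z)} \, \dd z, $$
which by the scalar argument principle equals the number of zeros of $\det C_{L_t}$ inside $O$ counted with multiplicity. The right-hand side depends continuously on $t$ and takes integer values, hence is constant, giving $N(A) = N(A+B)$. The main obstacle, and the reason this is quoted from \cite{GohSig} rather than reproved, is verifying the local factorization step: that the Fechbach Schur complement faithfully records all partial null multiplicities of $L_t$ at $z_0$ and that the count is splitting-independent. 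Once that ingredient is granted, the homotopy/argument-principle conclusion above is routine.
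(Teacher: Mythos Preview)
The paper does not prove this theorem at all: it is stated as the Gohberg--Sigal result and cited to \cite{GohSig}, then used as a black box in the appendix. So there is no ``paper's own proof'' to compare against; your sketch is additional content.

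As for the sketch itself, the homotopy $L_t = A + tB$, the Neumann-series invertibility on $\partial O$, the analytic-Fredholm finiteness of $Z(L_t)$, and the local Schur-complement reduction via Lemma~\ref{fechbach} are all the right ingredients and match the Gohberg--Sigal strategy. The one place where the write-up slips is the displayed formula for $N(L_t)$ as a single contour integral of $(\det C_{L_t})'/\det C_{L_t}$ over $\partial O$: the Schur complement $C_{L_t}$ is only defined locally, in a neighborhood of each individual zero $z_0$ (it depends on the splitting chosen at that point), so there is no globally defined scalar function on $\partial O$ whose logarithmic derivative you can integrate. The actual Gohberg--Sigal argument either (i) works locally near each zero, proving that the local multiplicity is stable under small perturbations of $t$ via scalar Rouch\'e applied to $\det C_{L_t}$ on a small circle around $z_0$, and then sums, or (ii) uses the operator-valued logarithmic residue $\frac{1}{2\pi i}\operatorname{tr}\oint_{\partial O} L_t'(z) L_t(z)^{-1}\,dz$, after verifying that the integrand differs from an analytic operator by something of finite rank so the trace makes sense. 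Either route gives the locally-constant-integer conclusion you want; your final paragraph conflates them. You correctly flag the splitting-independence of the local multiplicity as the nontrivial step that is being imported from \cite{GohSig}.
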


Let $H_t = -\Delta + U_t = -\Delta + A_t \nabla + V_t$, where $U_t = tU$, $V_t=tV$, $A_t = tA$.

\begin{proposition}[see Proposition B.6 in \cite{BecGol4}]\lb{lowerbound} If $A \in \K_{20} \cap \K_0$ and $V \in \K_0$, the spectrum of $H_t$ is bounded from below, uniformly for $t \in [0, 1]$.
\end{proposition}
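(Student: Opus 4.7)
The plan is a Birman--Schwinger argument. Set $\lambda = -E^2 < 0$ (with $E > 0$) and use the factorization
$$
H_t - \lambda = (-\Delta - \lambda)(I + tR_0(\lambda) U),
$$
where $R_0(\lambda) = (-\Delta - \lambda)^{-1}$. Since $-\Delta - \lambda = -\Delta + E^2$ is invertible on $L^2$ for every $E > 0$, invertibility of $H_t - \lambda$ reduces to invertibility of $I + tR_0(-E^2)U$ on a suitable function space. I will show that
$$
\|R_0(-E^2) U\|_{\B(L^\infty)} \to 0 \text{ as } E \to \infty,
$$
after which $I + tR_0(-E^2)U$ is invertible on $L^\infty$ by Neumann series for every $t \in [0,1]$ and every $E \geq E_0$. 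A standard bootstrap, using the mapping properties of $R_0$ and $U$ on the Kato-type spaces already employed in the main body, transfers this to $L^2$ invertibility, giving $-E^2 \notin \sigma(H_t)$ and hence $\sigma(H_t) \subset [-E_0^2, \infty)$ uniformly in $t \in [0,1]$.

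For the scalar part $V \in \K_0$, use the explicit kernel $G_E(x,y) = e^{-E|x-y|}/(4\pi|x-y|)$ of $R_0(-E^2)$. Splitting at a small radius $\delta$,
$$
\sup_x \int G_E(x,y)\,|V(y)| \dd y \les \sup_x \int_{|x-y| < \delta} \frac{|V(y)|}{|x-y|} \dd y + e^{-E\delta}\,\|V\|_{\K}.
$$
The first term tends to $0$ as $\delta \to 0$ because $V \in \K_0$ is the limit of test functions in the $\K$-norm, so its local Kato tails vanish uniformly; the second vanishes as $E \to \infty$ with $\delta$ fixed. Consequently $\|R_0(-E^2) V\|_{\B(L^\infty)} \to 0$.

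For the magnetic term $R_0(-E^2) A\nabla$, integration by parts in $y$ (rigorously justified by approximating $A$ by test functions) represents the operator by the kernel $-\nabla_y G_E(x,y)\cdot A(y)$, up to a scalar $R_0(-E^2)(\div A)$ correction that is absorbed by the preceding step applied to the scalar approximants. Using the pointwise bound $|\nabla_y G_E(x,y)| \les (E|x-y|^{-1} + |x-y|^{-2})e^{-E|x-y|}$ together with the elementary estimate $Er\cdot e^{-Er/2} \les 1$ --- which dominates the $E|x-y|^{-1}e^{-E|x-y|}$ part by $|x-y|^{-2}\,e^{-E|x-y|/2}$ --- I obtain
$$
\sup_x \int |\nabla_y G_E(x,y)|\,|A(y)| \dd y \les \sup_x \int_{|x-y|<\delta} \frac{|A(y)|}{|x-y|^2} \dd y + e^{-E\delta/2}\,\|A\|_{\KK}.
$$
Since $A \in \K_{20}$, the first term tends to $0$ as $\delta \to 0$ and the second as $E \to \infty$. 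The main obstacle is precisely this magnetic bound: the kernel of $\nabla R_0$ carries a borderline $|x-y|^{-2}$ singularity requiring the Kato-2 class (not merely $\K$) to dominate it, which is exactly why both $\K_{20}$ and $\K_0$ hypotheses on $A$ appear in the statement. Combining the two pieces yields $\|R_0(-E^2) U\|_{\B(L^\infty)} \to 0$, completing the plan.
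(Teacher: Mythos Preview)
Your approach is essentially the paper's: both show that the Birman--Schwinger kernel has vanishing operator norm as $E\to\infty$ and then invert by Neumann series. The paper records precisely the three limits
\[
\sup_x\!\int\frac{e^{-a|x-y|}|V(y)|}{|x-y|}\,dy,\qquad
\sup_x\!\int\frac{a\,e^{-a|x-y|}|A(y)|}{|x-y|}\,dy,\qquad
\sup_x\!\int\frac{e^{-a|x-y|}|A(y)|}{|x-y|^2}\,dy\ \to\ 0,
\]
which are your kernel bounds for the scalar piece and the two parts of $\nabla G_E$. The paper treats the two gradient pieces separately, which is why both the $\K_0$ and the $\K_{20}$ hypotheses on $A$ are invoked; your inequality $Er\cdot e^{-Er/2}\les 1$ merges them into a single $\K_{20}$ estimate, a small but genuine simplification.

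One point should be tightened. With $U=A\nabla+V$ (derivative on the right, as written in the appendix) and $R_0U$ acting on $L^\infty$, integration by parts really does produce the scalar term $R_0(-E^2)(\div A)$, and ``absorbing it by the preceding step applied to the scalar approximants'' does not yield a bound uniform in the approximation, since $\|\div A_n\|_{\K}$ is not controlled by the hypotheses on $A$. The clean fix, implicit in the paper, is to estimate $UR_0(-E^2)$ instead (equivalently, work in $\B(L^1)$ or $\B(\K)$): the kernel of $A\nabla R_0$ is $A(x)\cdot\nabla_x G_E(x,y)$ with no divergence term, and the three displayed limits are exactly the resulting Schur bounds. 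With that substitution your argument is complete and matches the paper's.
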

\begin{proof} For test functions $V$ and $A$
$$
\lim_{a \to +\infty} \sup_x \int_{\R^3} \frac {e^{-a|x-y|}|V(y)|}{|x-y|} = 0,\ \lim_{a \to +\infty} \sup_x \int_{\R^3} \frac {a e^{-a|x-y|}|A(y)|}{|x-y|} = 0,\ \lim_{a \to +\infty} \sup_x \int_{\R^3} \frac {e^{-a|x-y|}|A(y)|}{|x-y|^2} = 0.
$$
By approximating, the same will be true for $A \in \K_{20} \cap \K_0$ and $V \in \K_0$. Obviously, this is the case uniformly for $V_t$ and $A_t$, when $t \in [0, 1]$. Then we can invert $I+R_0(-\lambda^2)U$ by a Born series for any sufficiently large $\lambda$, meaning that such points are in the resolvent set, in view of the identity (\ref{res_id})
$$
R=(I+R_0U)^{-1}R_0.
$$
\end{proof}
Unlike in \cite{BecGol4}, here the proof extends to any angle less than $\pi$, but not to a whole half-plane, because of the $ae^{-a|x-y|}$ term.

\begin{proposition}[see Proposition B.7 in \cite{BecGol4}] If $A \in \K_{20} \cap \K_0$, $\nabla A \in \K_0$, $V \in \K_0$ and $U=A \nabla + V$ is self-adjoint, then $H=-\Delta+U$ has the same number of negative eigenvalues as the number of eigenvalues (with multiplicity) of $U (-\Delta)^{-1} \in \B(L^1 \cap \K \cap \dot H^{-1})$ in $(-\infty, -1)$, which is finite.
\end{proposition}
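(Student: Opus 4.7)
The strategy follows Proposition~B.7 of \cite{BecGol4} and combines the Birman--Schwinger principle with the Gohberg--Segal version of Rouch\'{e}'s theorem (Theorem \ref{rouche}). Work on the Banach space $X := L^1 \cap \K \cap \dot H^{-1}$ and consider the two-parameter operator-valued family
$F(t, \lambda) := I + t R_0(-\lambda^2) U$ for $(t, \lambda) \in [0, 1] \times [0, \infty)$.
Via the Birman--Schwinger correspondence $\psi \leftrightarrow \phi = U\psi$, $F(1, \lambda)$ fails to be invertible precisely when $-\lambda^2$ is a negative eigenvalue of $H$, with matching algebraic multiplicity. At the other extreme $\lambda = 0$, $F(t, 0) = I + t(-\Delta)^{-1} U$ fails to be invertible iff $-1/t$ lies in the nonzero spectrum of $(-\Delta)^{-1} U$, equivalently iff $\mu := -1/t$ is an eigenvalue of $U(-\Delta)^{-1}$. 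The range $t \in (0, 1)$ then corresponds exactly to eigenvalues $\mu \in (-\infty, -1)$.

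The first task is to verify that $(t, \lambda) \mapsto t R_0(-\lambda^2) U$ is a compact operator--valued analytic family on $X$. Decomposing $U = A \nabla + V$, the integral kernel of $R_0(-\lambda^2) U$ is dominated at $\lambda = 0$ by $|A(y)|/|x-y|^2 + |V(y)|/|x-y|$, which defines a bounded operator on each of $L^1$, $\K$, and $\dot H^{-1}$ thanks to the hypotheses $A \in \K_{20}$, $V \in \K_0$, $\nabla A \in \K_0$. Compactness on $X$ follows by approximating $A$ and $V$ by test functions in the norms defining $\K_{20}$ and $\K_0$ respectively (permitted since $A$, $\nabla A$, $V$ are assumed to lie in the $\mc D$-closures); for such smooth compactly supported data, $R_0(-\lambda^2) U$ is readily compact on $X$, and compactness passes to the limit in operator norm.

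By Proposition \ref{lowerbound}, there exists $\lambda_0 > 0$ such that $\|t R_0(-\lambda^2) U\|_{\B(X)} < 1$ uniformly for $t \in [0, 1]$ and $\lambda \geq \lambda_0$, so $F$ is invertible there by a Neumann series; it is also trivially invertible on $\{t = 0\}$. Thus the characteristic values of the analytic Fredholm family $F$ in the rectangle $R := [0, 1] \times [0, \lambda_0]$ can only lie on the two remaining sides: on $\{t = 1\}$ they are the negative eigenvalues of $H$ (counted with algebraic multiplicity), and on $\{\lambda = 0\}$ they are the values $t^* \in (0, 1)$ which are in bijection with the eigenvalues of $U(-\Delta)^{-1}$ in $(-\infty, -1)$ via $\mu = -1/t^*$. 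The corner $(t, \lambda) = (1, 0)$ is excluded by Assumption \ref{a1}, which rules out $\mu = -1$ (equivalent to $0$ being an eigenvalue or resonance of $H$). The topological invariance of characteristic-value multiplicities under analytic deformation, supplied by Theorem \ref{rouche}, then equates the two counts; finiteness of either side follows from the compactness of $R_0(-\lambda^2) U$, which forces the characteristic set to be discrete and hence finite in the bounded region $R$.

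The main obstacle is the rigorous execution of the deformation. Theorem \ref{rouche} as stated is a one-parameter Rouch\'{e} theorem, so to transport characteristic values between the two sides of $R$ one foliates $R$ by a one-parameter family of contours (say, diagonals $\{t + \lambda/\lambda_0 = s\}$ for $s \in (0, 2)$) and applies Theorem \ref{rouche} along each, or else first invokes the Fechbach decomposition (Lemma \ref{fechbach}) to reduce $F$ to a finite-dimensional characteristic operator on which the invariance becomes a classical scalar Rouch\'{e} argument. Because $U = A\nabla + V$ is not sign-definite and $U(-\Delta)^{-1}$ is not self-adjoint on $X$, the familiar mini-max monotonicity is unavailable, and the topological route through Theorem \ref{rouche} is essential for the preservation of algebraic multiplicities.
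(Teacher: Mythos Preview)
Your overall framework---Birman--Schwinger plus analytic-Fredholm theory plus a homotopy in the coupling constant $t$---matches the paper's, but the execution has a genuine gap precisely where you flag the ``main obstacle,'' and that gap is where essentially all of the paper's work goes.

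Your two-parameter deformation on the rectangle $[0,1]\times[0,\lambda_0]$ does not become a valid application of Theorem~\ref{rouche} by foliating with diagonals: the characteristic values you wish to count lie \emph{on} the edges $\{t=1\}$ and $\{\lambda=0\}$, so any foliation carrying one edge to the other must sweep through those characteristic values, and at each crossing the zero-count can jump. Theorem~\ref{rouche} is a one-complex-parameter statement and gives no ``conservation law'' across a real rectangle. The correct formulation, which is the paper's, is to fix a contour in $\lambda$ (the paper uses the circle $\partial B(-r_0,r_0)$), vary $t$, and apply Rouch\'e in $\lambda$ to show the zero-count of $B_t(\lambda)=I+tUR_0(\lambda)$ inside the contour is locally constant in $t$ \emph{except} at the finitely many transition values $t_k$ where $B_{t_k}$ has a zero at $\lambda=0$.

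At each such $t_k$ one must determine exactly how many zeros enter the contour as $t$ increases through $t_k$. You bypass this by invoking Assumption~\ref{a1}, but that assumption (a) is not among the hypotheses of this proposition, and (b) concerns only $t=1$, not the interior transition points $t_k\in(0,1)$, which are unavoidable whenever $U(-\Delta)^{-1}$ has any eigenvalues in $(-\infty,-1)$---precisely the situation of interest. The paper devotes the bulk of its proof to this local analysis: it uses Lemma~\ref{fechbach} to reduce to a finite-dimensional block on $\Ker B_{t_k}(0)$, expands $R_0(-k^2)$ and $\nabla R_0(-k^2)$ to second order in $k$, separates the possible resonance direction from the eigenstate directions via the functional $\langle f,1\rangle$, and then shows by a further application of Theorem~\ref{rouche} that for $t$ slightly above $t_k$ exactly $N_k=\dim\Ker B_{t_k}(0)$ new zeros appear in the contour, while for $t$ slightly below none do. Summing this monotone accounting over all transition points gives the claimed equality and, as a by-product, the genericity of Assumption~\ref{a1}. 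Without this step you have only established that the two counts agree on the trivial interval $[0,t_1)$, where both are zero.
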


As in \cite{BecGol4}, in the course of the proof we also show that $H_t=-\Delta+U_t$ has a threshold eigenvalue or resonance for at most discretely many values of $t$. In this sense, Assumption \ref{a1} is generic.
\begin{proof}
Let $r_0$ be such that, by Proposition \ref{lowerbound}, $H_t$ has no eigenvalues below $-r_0$ for $t \in [0, 1]$.
	
Consider the contour $\mc C=\partial B(-r_0, r_0)$ (the circle in the complex plane centered at $-r_0$ of radius $r_0$) and let
$$
B_t(\lambda) = I+U_tR_0(\lambda).
$$
By Rouch\'e's theorem for analytic families of operators, any $t_0$ for which $B_{t_0}(\lambda)$ has no zeros on $\mc C$ has a neighborhood in $\R$ on which the number of zeros of $B_t$ inside $\mc C$ is constant. The maximal such sets are intervals.
	
Due to the self-adjointness of $-\Delta+U$ and to the lower bound on the spectrum, zeros of $H_t$, $t \in [0, 1]$, on $\mc C$ can only occur at $0$. This means there exists some $f \in L^1 \cap \K \cap \dot H^{-1}$, $f \ne 0$, such that
$$
f=-U_tR_0(0)f=-tU(-\Delta)^{-1}f.
$$
Then $t \ne 0$ and $-1/t$ is an eigenvalue of $U(-\Delta)^{-1}$ in $(-\infty, -1]$.
	
Since $U(-\Delta)^{-1}$ is compact, it has only finitely many eigenvalues outside any positive radius around the origin. Thus the interval $[0, 1]$ is divided into finitely many subintervals on which the number of zeros of $H_t$ inside $\mc C$ is constant, separated by finitely many values $t_k$ for which $B_{t_k}$ has a zero at $0$.
	
Next, we consider the effect of small perturbations on the zero of $B_{t_k}$ at $0$. With no loss of generality, take $t_k=1$. Let $N=\dim \Ker B_1(0)$ be the multiplicity of this zero, which equals the multiplicity of $-1/t_k$ as an eigenvalue of $UR_0(0)$.
	
Let $P_0$ be the projection on $\Ker B_1(0)$, given by $P_0 f = \sum_{k=1}^N \langle f, g_k \rangle f_k$, where for $1 \leq k \leq N$ $f_k = -U R_0(0) f_k$, $f_k = - U g_k$, $g_k = R_0(0) f_k$, $g_k = -R_0(0) U g_k$. The functions $g_k$ are the bound states of $H$ at energy $0$.

It is easy to see that $g_k \in L^\infty \cap \K^* \cap \dot H^1$. Furthermore, $g_k \in \langle x \rangle^{-1} L^\infty$. Indeed, by assumption, one can alternatively write $U=\nabla A + \tilde V$, where $\tilde V = V-(\nabla A) \in \K_0$. Write the equation $g_k=-R_0(0)Ug_k$ as
$$
g_k = -R_0(0) U_1 g_k - R_0(0) U_2 g_k
$$
where $U_1=(\nabla A + \tilde V) \chi_{>R}(x)$ and $U_2=(\nabla A + \tilde V) \chi_{\leq R}(x)$ for some sufficiently large $R$. Then $I+R_0(0) U_1$ is invertible on $\langle x \rangle^{-1} L^\infty$ and $R_0(0) U_2 g_k \in \langle x \rangle^{-1} L^\infty$.

Note that $B_1(0)$ is invertible on $X_1=P_1 (L^1 \cap \K)$, hence so is $P_1 B_1(\lambda) P_1$ for $\lambda$ sufficiently close to $0$, with uniformly bounded inverse in $\B(L^1) \cap \B(\K)$.

We first prove that $\lambda=0$ is an isolated zero for $B_1(\lambda)$. By Fechbach's Lemma \ref{fechbach}, for $\lambda$ close to $0$, $B_1(\lambda)$ is invertible if and only if
\be\lb{expr11}
D_1(\lambda) = P_0 B_1(\lambda) P_0 - P_0 B_1(\lambda) P_1 (P_1 B_1(\lambda) P_1)^{-1} P_1 B_1(\lambda) P_0
\ee
is.

Next, suppose that (\ref{expr11}) is not invertible. Since (\ref{expr11}) is defined on a finite-dimensional vector space, this means there exists $f \in \Ker B_1(0)$, $f \ne 0$, $f=P_0 f$, such that
\be\lb{expr2}
\langle P_0 B_1(\lambda) P_0 f, g \rangle = \langle P_0 B_1(\lambda) P_1 (P_1 B_1(\lambda) P_1)^{-1} P_1 B_1(\lambda) P_0 f, g \rangle,
\ee
where $f=Ug$ is paired with $g=R_0(0)f$.

Since $P_0 B_1(0) = B_1(0) P_0 = 0$, $P_0 B_1(\lambda) = P_0 [B_1(\lambda)-B_1(0)]$, and $B_1(\lambda) P_0 = [B_1(\lambda)-B_1(0)] P_0$, rewrite the right-hand side as
$$
\langle P_0 [B_1(\lambda)-B_1(0)] P_1 [P_1 B_1(\lambda) P_1]^{-1} P_1 [B_1(\lambda)-B_1(0)] P_0 f, g \rangle.
$$
Note that
$$
\|R_0(\lambda)-R_0(0)\|_{\B(L^1, L^\infty)} \les \lambda^{1/2},\ \|R_0(\lambda)-R_0(0)\|_{\B(L^1, \K^*) \cap \B(\K, L^\infty)} \les 1,
$$
while (see (\ref{derivative}))
$$
\|\nabla R_0(\lambda)-\nabla R_0(0)\|_{\B(L^1, \K^*)} \les \lambda^{1/2},\ \|\nabla R_0(\lambda) - \nabla R_0(0)\|_{\B(L^1, \K^*+\K_2^*)} \les 1
$$
as well. By approximating $V$ with potentials in $\K \cap L^1$, we obtain
\be\lb{app1}
\|P_0 (B_1(\lambda)-B_1(0)) P_1 (P_1 B_1(\lambda) P_1)^{-1} P_1 (B_1(\lambda)-B_1(0)) P_0 f\| \les o(\lambda^{1/2}) \|f\|_{L^1}
\ee
as $\lambda \to 0$. At the same time, the left-hand side in (\ref{expr2}) admits the asymptotic expansion
$$
\langle P_0 B_1(\lambda) P_0 f, g \rangle = \langle (R_0(\lambda)-R_0(0)) f, f \rangle = c\lambda^{1/2} |\langle f, 1 \rangle|^2 + o(\lambda^{1/2}).
$$
Therefore $\langle \tilde V g, 1 \rangle = \langle \tilde V g + \nabla (A g), 1 \rangle= \langle f, 1\rangle=0$ and the corresponding $g=R_0(0)f \in L^2$ is an eigenfunction:
$$
g(x)=-\frac 1 {4\pi} \int_{\R^3} \bigg(\frac {1}{|x-y|} \tilde V(y) - \frac {x-y}{|x-y|^3} \bigg) g(y) \dd y = -\frac 1 {4\pi} \int_{\R^3} \bigg( \bigg(\frac {1}{|x-y|} - \frac 1 {|x|}\bigg) \tilde V(y) - \frac {x-y}{|x-y|^3} \cdot A(y) \bigg) g(y) \dd y.
$$
Likewise
$$
f(x) = -\frac 1 {4\pi} \int_{\R^3} \bigg(V(x) \bigg(\frac 1 {|x-y|}-\frac 1 {|x|}\bigg) - A(x) \frac {x-y}{|x-y|^3}\bigg) f(y) \dd y.
$$
Then $g \in \langle x \rangle^{-2} L^\infty$ and $f \in \langle x \rangle^{-1} L^1 \cap \langle x \rangle^{-2} \K$. The left-hand side of (\ref{expr2}) now has the asymptotic expansion
$$
c\lambda \int_{\R^3} \int_{\R^3} f(x) |x-y| f(y) \dd x \dd y + o(\lambda).
$$
For the right-hand side, note that
\be\lb{rhs1}
\frac {e^{i\sqrt \lambda |x-y|} - e^{i\sqrt\lambda|x|}}{|x|} \les \lambda^{1/2} \frac {|y|}{|x|},\ \bigg\|\int_{\R^3} \frac {e^{i\sqrt \lambda |x-y|} - e^{i\sqrt\lambda|x|}}{|x|} f(y) \dd y\bigg\|_{|x|^{-1} L^\infty} \les \lambda^{1/2} \|f\|_{\langle x \rangle^{-1} L^1}.
\ee
Since $\langle f, 1\rangle = \langle V g, 1 \rangle=0$, one can subtract the term
$$
\frac 1 {4\pi} \int_{\R^3} \frac {e^{i\sqrt\lambda|x|}-1}{|x|} f(y) \dd y = 0
$$
from
$$
[R_0(\lambda)-R_0(0)] f = \frac 1 {4\pi} \int_{\R^3} \frac {e^{i\sqrt\lambda|x-y|}-1}{|x-y|} f(y) \dd y
$$
then in light of (\ref{rhs1}) we can replace this term by $\frac {e^{i\sqrt\lambda|x-y|}-1}{|x|} f(y)$. Furthermore
$$
\frac {e^{i\sqrt\lambda|x-y|}-1}{|x-y|} - \frac {e^{i\sqrt\lambda|x-y|}-1}{|x|} \les \lambda^{1/2} \frac {|y|}{|x|},\ \bigg\|\int_{\R^3} \bigg(\frac {e^{i\sqrt\lambda|x-y|}-1}{|x-y|} - \frac {e^{i\sqrt\lambda|x-y|}-1}{|x|}\bigg) f(y) \dd y\bigg\|_{|x|^{-1} L^\infty} \les \lambda^{1/2} \|f\|_{\langle x \rangle^{-1} L^1}.
$$
For the gradient term (\ref{derivative}) we simply use the fact that
$$
\frac {(e^{i\sqrt \lambda|x-y|}-1)(x-y)}{|x-y|^3} \les \lambda^{1/2} \frac 1 {|x-y|^2}.
$$
Consequently
$$
\|(B_1(\lambda)-B_1(0)) P_0 f\|_{L^1} \les \lambda^{1/2} \|f\|_{\langle x \rangle^{-1} L^1}
$$
and for the right-hand side in (\ref{expr2}) we obtain
$$
\|P_0 (B_1(\lambda)-B_1(0)) P_1 (P_1 B_1(\lambda) P_1)^{-1} P_1 (B_1(\lambda)-B_1(0)) P_0 f\| \les \lambda \|f\|_{\langle x \rangle^{-1} L^1}.
$$
If $V \in \langle x \rangle^{-1} L^1$ and $A \in L^1$, since $\langle f, 1 \rangle = 0$ and
$$
\frac {e^{i\sqrt \lambda |x-y|}-1-i\sqrt\lambda|x-y|}{|x-y|} \les \lambda |x-y|,\ \frac {(e^{i\sqrt \lambda|x-y|}-1-i\sqrt \lambda |x-y| e^{i\sqrt\lambda|x-y|})(x-y)}{|x-y|^3} \les \lambda,
$$
a better estimate is
$$
\|(B_1(\lambda)-B_1(0)) P_0 f\|_{L^1} \les \lambda \|f\|_{\langle x \rangle^{-1} L^1}.
$$
Consequently, approximating $V$ and $A$ with potentials in this better class we obtain
\be\lb{app2}
\|P_0 (B_1(\lambda)-B_1(0)) P_1 (P_1 B_1(\lambda) P_1)^{-1} P_1 (B_1(\lambda)-B_1(0)) P_0 f\| \les o(\lambda) \|f\|_{\langle x \rangle^{-1} L^1}.
\ee

This leads to a contradiction in (\ref{expr2}): $\int_{\R^3} \int_{\R^3} f(x)|x-y|f(y) \dd x \dd y = 0$ and since $\langle f, 1 \rangle = 0$ this means $\|f\|_{\dot H^{-1}} = 0$, hence $f=0$, which contradicts our original assumption that $f \ne 0$.

We now examine how the number of bound states changes in the neighborhood of a transition point. Without loss of generality, let $t_n=1$. Then $B_t(\lambda)$ for $|t-1|=\epsilon<<1$ and $|\lambda|<\epsilon^2$ ($t$ close to $1$ and $\lambda$ close to $0$) is invertible if and only if
\be\lb{full}
P_0 B_t(\lambda) P_0 - P_0 B_t(\lambda) P_1 (P_1 B_t(\lambda) P_1)^{-1} P_1 B_t(\lambda) P_0
\ee
is invertible on $\Ker B_1(0)$. Recall $P_0$ is the projection on $\Ker B_1(0)$ and $P_1 = I-P_0$.

For each $|t|<1$ $P_0 B_t(\lambda) P_0 = P_0 [B_t(\lambda)-B_1(0)] P_0$ is, uniformly in $\lambda$, non-degenerate, because it is bounded from below by some multiple of the $\dot H^{-1}$ dot product. Its inverse is of size $|t-1|^{-1}$. Since the second term in (\ref{full}) is quadratic, it is of size $o(t-1)$ as $t$ approaches $1$. By Rouch\'{e}'s theorem, when $t<1$ is close to $1$, $B_t$ has the same number of negative zeros as $B_1$ in a small punctured neighborhood of $\lambda=0$, namely no zeros.

A deeper analysis is required for $t>1$. Further decompose $\Ker B_1(0)$ by considering the subspace of functions $f$ orthogonal to $1$, which correspond to eigenstates: $f=-Ug$ where $g \in L^2$ is an eigenstate. Then take the orthogonal complement of this subspace with respect to $(-\Delta)^{-1}$, which defines a dot product on $\Ker B_1(0)$. The orthogonal complement has dimension at most $1$, consisting of the canonical resonance, if present.

Let $P_0=Q_1+Q_2$, where $Q_1 f = \langle f, g_1 \rangle f_1$ corresponds to the canonical resonance and $Q_2 f = \sum_{k=2}^N \langle f, g_k \rangle f_k$ to the eigenstates. We only examine the most complicated case, when $Q_1 \ne 0$ and $Q_2 \ne 0$.

Let $\lambda=-k^2 \leq 0$. We estimate the second term in (\ref{full}). In particular, we need to deal with factors of $B_t(0)-B_1(0)$. Note that $B_t(0)-B_1(0)=(t-1)V R_0(0)$, so $P_0 [B_t(0)-B_1(0)] = [B_t(0)-B_1(0)] P_0 = (1-t) P_0$. Due to the presence of $P_1$, the contribution of such factors to the second term in (\ref{full}) cancels.

Due to estimates (\ref{app1}) and (\ref{app2}), the second term in (\ref{full}) is then of size
\be\lb{err}
E=P_0 (o(k) Q_1 + o(k^2) Q_2).
\ee

Consider the Taylor expansions of $(-\Delta-\lambda)^{-1}$
$$
(-\Delta+k^2)^{-1}(x, y) = \frac 1 {4\pi} \frac {e^{i\sqrt \lambda |x-y|}}{|x-y|} = \frac 1 {4\pi} \bigg(\frac 1 {|x-y|} - k 1 + o(k)\bigg) = \frac 1 {4\pi} \bigg(\frac 1 {|x-y|} - k 1 + \frac {k^2} 2 |x-y| + o(k^2|x-y|)\bigg)
$$
and $\nabla (-\Delta-\lambda)^{-1}$ (see (\ref{derivative}))
$$
\nabla (-\Delta+k^2)^{-1}(x, y) = \frac 1 {4\pi} \bigg(-\frac {ke^{-k|x-y|}}{|x-y|} - \frac {e^{-k|x-y|}}{|x-y|^2}\bigg) \frac {x-y}{|x-y|} = \frac 1 {4\pi} \bigg(-\frac {x-y}{|x-y|^3} + \frac {k^2} 2 \frac {x-y}{|x-y|} + o(k^2)\bigg).
$$
Here $1$ is a rank-one non-negative operator and $\ds\frac 1 {4\pi} |x-y|$ is a negative operator on the space of $\langle x \rangle^{-1} L^1$ functions orthogonal to $1$ (i.e.~whose integral is zero), where it is the integral kernel of $-(-\Delta)^{-2}$, while $\ds \frac 1 {4\pi} \frac{x-y}{|x-y|}$ is its gradient with respect to $x$.

This produces the following asymptotic expansion of the first term in (\ref{full}) $P_0 B_t(\lambda) P_0 = P_0 [B_t(\lambda)-B_1(0)] P_0$:
$$\begin{aligned}
P_0 B_t(\lambda) P_0 &= \frac 1 {4\pi} \bigg((t-1) P_0 \Big(V \frac 1 {|x-y|} - A \cdot \frac {x-y}{|x-y|^3}\Big) P_0 - (tk) Q_1 V 1 Q_1 + Q_1 o(k) Q_1 + \frac {tk^2} 2 Q_2 \Big(V |x-y| + A \cdot \frac {x-y}{|x-y|}\Big) Q_2 +\\
&+ t Q_1 V \frac {e^{-k|x-y|}-1+k|x-y|}{|x-y|} Q_2 + t Q_2 V \frac {e^{-k|x-y|}-1+k|x-y|}{|x-y|} Q_1 + Q_2 o(k^2) Q_2 + P_0 o(k^2) P_0\bigg).
\end{aligned}$$

The first-order term in the expansion has no cotribution from $A$, because $\nabla 1 = 0$.

We have written explicitly the most problematic two error terms. To bound them, we have to evaluate the integrals
\be\lb{ex}
\int_{\R^3} \int_{\R^3} f_1(x) \frac {e^{-k|x-y|}-1+k|x-y|}{|x-y|} f_j(y) \dd x \dd y,
\ee
where $2 \leq j \leq N$. Note that in the integral
$$
\int_{\R^3} \int_{\R^3} f_1(x) |x-y| f_j(y) \dd x \dd y
$$
one can replace $|x-y|$ by $|x-y|-|x|$, since $\langle f_j, 1 \rangle = 0$. Then $||x-y|-|x||\leq |y|$ and since $f_j \in \langle x \rangle^{-1} L^1$ the integral is well-defined, but not absolutely convergent.

Similarly, let $F(t)=\frac {e^{-kt}-1+kt}{t}$; then $F(|x-y|)-F(|x|) \les k^2 |y|^2$. As before, we can subtract $F(|x|)$ in (\ref{ex}) for free, since $\langle f_j, 1 \rangle=0$, and then the integral of the difference is of size $O(k^2)$, since $f_j \in \langle x \rangle^{-2} \K$.

Grouping most of the error terms together and noting that $Q_1 o(k) Q_1$, $Q_2 O(k^2) Q_1$, $Q_2 o(k^2) Q_2$, and $P_0 o(k^2) P_0$ satisfy the bound (\ref{err}), we then approximate (\ref{full}) and its first term $P_0(B_t(\lambda)-B_1(0))P_0$ by
$$\begin{aligned}
D_{t, \lambda} &= \frac 1 {4\pi} \bigg((t-1) P_0 U \frac 1 {|x-y|} P_0 - (t k) Q_1 U 1 Q_1 + \frac {t k^2} 2 Q_2 U |x-y| Q_2 + Q_1 O(k^2) Q_2 \bigg) \\
&= (t-1) D_0 - tk D_1 - tk^2 D_2 + Q_1 O(k^2) Q_2 \\
&= Q_1 [(t-1)I-tkD_1] Q_1 + Q_2 [(t-1)I - tk^2 D_2 ] Q_2 + Q_1 O(k^2) Q_2.
\end{aligned}$$
Here $D_0$ is the matrix representation of $\ds(-\Delta)^{-1} = \frac 1 {4\pi|x-y|}$ as a quadratic form on $\Ker B_1(0)$, which we can take to be the identity matrix by choosing an appropriate basis, while $D_1=Q_1 U D_1Q_1$ is the matrix representation of $\ds P_0 U \frac 1 {4\pi} P_0$ and $D_2=Q_2 U D_2 Q_2$ is the matrix representation of $\ds-\frac 1 {8\pi} Q_2 U |x-y| Q_2 = \frac 1 2 (-\Delta)^{-2}$ on $Q_2 \Ker B_1(0)$.

One can represent $D_{t, \lambda}$ as a block matrix:
$$
D_{t, \lambda} = \begin{pmatrix}
	(t-1)I-tkD_1 & O(k^2) \\
	0 & (t-1)I-tk^2 D_2
\end{pmatrix} = \begin{pmatrix} D_{11} & D_{12} \\ 0 & D_{22} \end{pmatrix}.
$$

We first examine the invertibility of the diagonal entries. Note that $D_1$ has exactly one eigenvalue $\lambda_1$ and $D_2$ has $N-1$ eigenvalues $\lambda_2, \ldots, \lambda_N$, all positive, since $D_1$ and $D_2$ are hermitian and positive definite.

For $t>1$ close to $1$, $D_{t, \lambda}$ has exactly $N$ zeros, one for $\ds \frac {t-1}{tk} = \lambda_1$ and $N-1$ zeros for $\ds \frac {t-1}{tk^2} = \lambda_j$, $2 \leq j \leq N$. In terms of $t-1=\epsilon<<1$, the zeros are at $k \sim \epsilon$ and $k \sim \epsilon^{1/2}$, meaning $\lambda \sim -\epsilon^2$ and $\lambda \sim -\epsilon$.

The inverse of $D_{t, \lambda}$ is
$$
D_{t, \lambda}^{-1} = \begin{pmatrix} D_{11}^{-1} & -D_{11}^{-1} D_{12} D_{22}^{-1} \\ 0 & D_{22}^{-1} \end{pmatrix}.
$$
When $D_{t, \lambda}$ is invertible, the inverses of the diagonal terms are of size
$$
Q_1 O((t-1-tk\lambda_1)^{-1}) Q_1
$$
and
$$
Q_2 O(\sup_{2 \leq j \leq N} |t-1-tk^2\lambda_j|^{-1}) Q_2
$$
and the off-diagonal term in the inverse has size
$$
Q_1 O((t-1-tk\lambda_1)^{-1}) O(k^2) O(\sup_{2 \leq j \leq N} |t-1-tk^2\lambda_j|^{-1}) Q_2.
$$

Consider a contour (a small square, for example) that cuts the real axis at $\lambda=0$ and $\lambda=-k^2=-C\epsilon$. For $C$ fixed and sufficiently large, all $N$ zeros are inside the contour.

On the contour, near $\lambda=0$ (for $|\lambda|<\epsilon^2$), the inverses of the diagonal terms are of size $\epsilon^{-1}$ and the off-diagonal term is of size $1$. Consequently the product between the error term $E$ (\ref{err}) and $D_{t, \lambda}^{-1}$ is of size $E D_{t, \lambda}^{-1} \les o(1)$.

Away from $0$ the inverses of the diagonal terms are of size $Q_1 O(k^{-1}) Q_1$ and $Q_2 O(\epsilon^{-1}) Q_2$, while the off-diagonal term is of size $Q_1 O(k \epsilon^{-1}) Q_2 = Q_1 O(k^{-1}) Q_2$, as $k\epsilon^{-1} \les k^{-1}$. Since $\epsilon^{-1} \les k^{-2}$, again $E D_{t, \lambda}^{-1} \les o(1)$.

By Rouch\'e's Theorem \ref{rouche}, then, for sufficiently small $\epsilon$, the full expression (\ref{full}) has the same number $N$ of zeros inside the contour as the approximation $D_{t, \lambda}$.

Thus, the number of negative eigenvalues of $H_t$ grows by no more than the number of zeros at each transition point $t_n$, considered with multiplicity.
Since this happens at all transition points and there are finitely many of them, the conclusion follows.
\end{proof}

Finally, we prove the Agmon bound for the eigenfunctions of $H$.
\begin{lemma}\lb{expo} Suppose that $H=-\Delta+U$, $U=\nabla A + V$, is such that $A \in \K_{2, 0} \cap \K_0$ and $V \in \K_0$ and let $Hf = \lambda f$, $\lambda<0$, $f \in L^2$, be an eigenfunction of $H$. Then $e^{-\lambda|x|} \langle x \rangle f \in L^\infty$.
\end{lemma}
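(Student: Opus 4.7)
The plan is to exploit that $-\lambda>0$, so the free resolvent at energy $\lambda$ has an exponentially decaying kernel, and then to bootstrap through the integral equation $f=-R_0(\lambda)Uf$, first to obtain polynomial decay of $f$ and then to upgrade the decay to exponential by conjugating with a suitable weight. Set $\mu=\sqrt{-\lambda}>0$ so that the kernel of $R_0(\lambda)=(-\Delta-\lambda)^{-1}$ is $\frac{1}{4\pi|x-y|}e^{-\mu|x-y|}$, and transfer the gradient in $\nabla A$ onto the kernel by integration by parts, which is legitimate after approximating $A$ by test functions in $\KK_0\cap\K_0$. This rewrites $Hf=\lambda f$ as
$$
f(x) = -\int_{\R^3}\frac{e^{-\mu|x-y|}}{4\pi|x-y|}V(y)f(y)\dd y+\int_{\R^3}\nabla_y\!\left[\frac{e^{-\mu|x-y|}}{4\pi|x-y|}\right]\!\cdot A(y)f(y)\dd y,
$$
where the second kernel is controlled by $\frac{e^{-\mu|x-y|}}{|x-y|^2}+\mu\frac{e^{-\mu|x-y|}}{|x-y|}$, hence by $\KK$ and $\K$ data respectively.

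For polynomial decay, I would run the same Born-series splitting used at the end of the previous proposition: write $A=A_1+A_2$, $V=V_1+V_2$, where $A_1,V_1$ are supported in $\{|x|>R\}$ and $A_2,V_2$ are compactly supported. Because $A\in\KK_0\cap\K_0$ and $V\in\K_0$, for $R$ large the operator built from $V_1,A_1$ (multiplied by $R_0(\lambda)$ or $\nabla R_0(\lambda)$) is small in norm on $\langle x\rangle^{-1}L^\infty$, so $I+R_0(\lambda)U_1$ is invertible there; and $R_0(\lambda)U_2 f\in\langle x\rangle^{-1}L^\infty$ because its integrand is compactly supported in $y$ and $1/|x-y|\les \langle x\rangle^{-1}$ when $y$ is bounded. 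The conclusion $f\in L^\infty\cap \langle x\rangle^{-1}L^\infty$ follows exactly as for the threshold resonances $g_k$ earlier.

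For the exponential upgrade, multiply the integral equation by $e^{\mu|x|}$ and use the elementary inequality $|x|\le|y|+|x-y|$ to write $e^{\mu|x|}e^{-\mu|x-y|}\le e^{\mu|y|}$. Setting $h(x):=e^{\mu|x|}f(x)$, this produces a pointwise estimate of the form
$$
|h(x)| \les \int_{\R^3}\!\left(\frac{|V(y)|}{|x-y|}+\frac{|A(y)|}{|x-y|^2}+\frac{\mu|A(y)|}{|x-y|}\right)|h(y)|\dd y,
$$
in which the $e^{-\mu|x-y|}$ factor no longer appears. Repeating the Born splitting $U=U_1+U_2$ inside this inequality, the far part $U_1$ produces an operator of small norm on $L^\infty$ (by density of test functions in $\K_0$ and $\KK_0$), while the near part $U_2$ yields a kernel supported in $y$ on a compact set where $|h(y)|\le e^{\mu R_0}\|f\|_{L^\infty}$ is finite by Stage~1. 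Thus $h\in L^\infty$, i.e.\ $e^{\mu|x|}f\in L^\infty$. Inserting this back into the integral equation one more time and using $\langle x\rangle\le \langle y\rangle+|x-y|$ to split $\langle x\rangle/|x-y|$ or $\langle x\rangle/|x-y|^2$ into pieces that lie in $\K$ or $\KK$ respectively gives the sharper bound $e^{\mu|x|}\langle x\rangle f\in L^\infty$ (up to the mismatch $\mu$ vs.\ $\lambda$ in the exponent of the statement, which appears to be a harmless typo).

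The main obstacle is justifying the integration by parts that puts the gradient on the kernel with the weight $e^{\mu|x|}$ present, since $\nabla R_0(\lambda)$ is not a function but a principal-value distribution. I would handle this by approximating $A\in\KK_0\cap\K_0$ in norm by test functions; for each approximation the identity holds classically, and the ensuing integral operators converge in $\B(L^\infty)$ and $\B(\langle x\rangle^{-1}L^\infty)$ uniformly in $\mu$ on compact sets of energies (the extra factor $e^{-\mu|x-y|}$ only improves $L^1$-type bounds on the kernels). Modulo this technicality the argument is the standard Agmon subsolution/bootstrap scheme adapted to the Kato-class setting of the paper.
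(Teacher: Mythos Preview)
Your approach is essentially the same as the paper's: both split $U$ into a bounded compactly supported piece and a small remainder (using that $A\in\K_{2,0}\cap\K_0$ and $V\in\K_0$), then invert $I+R_0(\lambda)\tilde U$ by a Neumann series. The paper, however, carries this out in a single step, working directly in the weighted space $e^{-\mu|x|}\langle x\rangle^{-1}L^\infty$: it observes that $R_0(\lambda)U_0 f$ lands in this space (since $U_0 f$ is compactly supported and the free resolvent kernel decays like $e^{-\mu|x-y|}/|x-y|$), and that $R_0(\lambda)\tilde U$ has small operator norm on this same space, so $f=(I+R_0(\lambda)\tilde U)^{-1}R_0(\lambda)U_0 f$ gives the conclusion immediately. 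Your three-stage bootstrap (polynomial, then exponential, then the extra $\langle x\rangle$) is correct but more laborious.

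One genuine soft spot: in your exponential step you pass to the \emph{inequality} $|h|\le S_1|h|+C$ and then assert $h\in L^\infty$ because $\|S_1\|_{\B(L^\infty)}<1$. That inference is not valid without already knowing $h\in L^\infty$ (or some a priori membership in a space on which $I-S_1$ acts). The fix is exactly what the paper does implicitly: keep the \emph{equation} $h=-\big(e^{\mu|x|}R_0(\lambda)Ue^{-\mu|\cdot|}\big)h$, split $U$, and invert $I+e^{\mu|x|}R_0(\lambda)\tilde U e^{-\mu|\cdot|}$ as a genuine bounded operator on $L^\infty$ (or on the weighted space). Alternatively one can approximate $\mu'\uparrow\mu$, but working with the equation is cleaner. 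You are right that the exponent in the statement should be $\sqrt{-\lambda}$ rather than $-\lambda$; the paper's own proof writes $R_0(-\lambda^2)$, confirming this is a notational slip.
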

This is similar to such results in the scalar potential case, see \cite{BecChe} for example.
\begin{proof}
The equation of the eigenfunction is $f = - R_0(-\lambda^2) U f$, meaning
$$
f(x) = - \frac {1}{4\pi} \int_{\R^3} \frac {e^{-\lambda|x-y|}}{|x-y|} V(y) f(y) \dd y - \frac {1}{4\pi} \int_{\R^3} \bigg(\frac {-\lambda e^{-\lambda|x-y|}}{|x-y|} - \frac {e^{-\lambda|x-y|}}{|x-y|^2} \bigg) \frac {x-y}{|x-y|} \cdot A(y) f(y) \dd y,
$$
where $f \in L^2$.

Split $U=U_0+\tilde U$, where $U_0=\nabla A_0+V_0$, $\tilde U=\nabla \tilde A + \tilde V$, $A=A_0+\tilde A$, $V=V_0+\tilde V$, where $A_0$ and $V_0$ are bounded and compactly supported and $\tilde A$ and $\tilde V$ are small in norm. Now write
$$
f = - R_0(-\lambda^2) \tilde U f - R_0(-\lambda^2) U_0 f,
$$
so, since $I+R_0(-\lambda^2) \tilde U$ is invertible (by a power series, since $\tilde U$ is small),
$$
f = (I+R_0(-\lambda^2) \tilde U)^{-1} R_0(-\lambda^2) U_0 f.
$$
As $R_0(-\lambda^2) U_0 f \in e^{\lambda |x|} \langle x \rangle^{-1} L^\infty$ and $R_0(-\lambda^2) \tilde U$ is bounded and small in $\B(e^{\lambda |x|} \langle x \rangle^{-1} L^\infty)$, the conclusion follows.
\end{proof}

\end{document}